\documentclass[12pt, reqno]{amsart}
\usepackage{fullpage}
\usepackage{amsthm, amstext, amsmath, amssymb}
\usepackage{graphicx}
\usepackage{verbatim}
\usepackage{listings, xcolor}
\usepackage{mathtools}
\usepackage[colorlinks=true,linktoc=all]{hyperref}
\usepackage[normalem]{ulem}
\usepackage{lipsum}
\usepackage{tikz, ifthen} 
\usepackage{pgfplots}
\usepackage{ninecolors}
\usepackage{subcaption}
\usepackage[alphabetic,lite]{amsrefs}
\usepackage{MnSymbol}

\usepackage{zref, zref-clever}

\zcsetup{cap=true, nameinlink=false}

\usepackage{tikz-cd}

\usepackage{multirow}
\usepackage{float}

\pgfplotsset{compat=1.18}

\theoremstyle{definition}

\AddToHook{env/definition/begin}{%
\zcsetup{countertype={definition=definition}}}
\newtheorem{definition}{Definition}[subsection]

\AddToHook{env/example/begin}{%
\zcsetup{countertype={definition=example}}}

\AddToHook{env/remark/begin}{%
\zcsetup{countertype={definition=remark}}}
\newtheorem{remark}[definition]{Remark}

\AddToHook{env/warning/begin}{%
\zcsetup{countertype={definition=warning}}}

\theoremstyle{plain}

\AddToHook{env/thm/begin}{
\zcsetup{countertype={definition=theorem}}}
\newtheorem{thm}[definition]{Theorem}

\AddToHook{env/lem/begin}{
\zcsetup{countertype={definition={lemma}}}}
\newtheorem{lem}[definition]{Lemma}

\AddToHook{env/proposition/begin}{%
\zcsetup{countertype={definition=proposition}}}
\newtheorem{proposition}[definition]{Proposition}

\AddToHook{env/corollary/begin}{%
\zcsetup{countertype={definition=corollary}}}
\newtheorem{corollary}[definition]{Corollary}

\AddToHook{env/property/begin}{%
\zcsetup{countertype={definition=property}}}

\AddToHook{env/question/begin}{%
\zcsetup{countertype={definition=question}}}

\AddToHook{env/conjecture/begin}{%
\zcsetup{countertype={definition=conjecture}}}

\numberwithin{equation}{section}

\DeclareMathOperator{\Aut}{Aut}
\DeclareMathOperator{\Pic}{Pic}
\DeclareMathOperator{\Sym}{Sym}

\newcommand{\Ueno}[2]{Z_{#1}^{#2}}

\title{Indecomposable translates and degree-$d$ points}
\author{Alexander Galarraga}
\date{\today}

\begin{document}

\begin{abstract}
    A curve over a number field has infinitely many degree-$d$ points if and only if there exists a degree-$d$ $\mathbb{P}^1$ or $AV$-parametrized point \cite{BELOV2019}. We show that a $\mathbb{P}^1$-isolated $AV$-parametrized point arises only from an abelian translate satisfying a certain property, which we term ``indecomposable". We apply our results to a family of genus-7 curves for which we can show that every effective abelian translate in degree 5 is decomposable over the algebraic closure, giving a negative answer to a question of Viray and Vogt.
\end{abstract}

\maketitle

\section{Introduction}

For a smooth, projective, and geometrically integral curve $X$ over a number field $k$, if the set of degree-$d$ points is Zariski dense for some $d \geq 1$, then $X/k$ satisfies some geometric property. For instance, Faltings's proof of the Mordell Conjecture states that if the set of degree-1 points is Zariski dense, then the genus of $X$ is at most 1. We focus on the case when $X/k$ has genus $g \geq 2$. Recent results in \cite{VV26}, which built on \cite{BELOV2019}*{Theorem 4.2} and Faltings' proof of the Mordell-Lang conjecture \cite{Faltings1994}, show that if the degree-$d$ points are Zariski dense, then those points occur in at least one of two ways: they are $\mathbb{P}^1$-parametrized or $AV$-parametrized.
The $\mathbb{P}^1$-parametrized points are well-understood: they arise as fibers of a degree-$d$ map $X \to \mathbb{P}^1$. The $AV$-parametrized points are more difficult, as they do not necessarily arise from a morphism of smooth curves, see \cite{DF1993}*{Section 4} for some examples.

We study $AV$-parametrized points in terms of the geometry of $X/k$. In order to state our main theorem on $\mathbb{P}^1$-isolated $AV$-parametrized points, we make a definition:

\begin{definition}\label{def: decomposable intro}
    Let $z \in W_X^d(k)$ and let $A$ be an abelian variety such that $z + A \subset W_X^d$. The abelian translate $z + A$ is \textit{decomposable} if there exists $e < d$ and abelian translates $u + B \subset W_X^e$, $v + G \subset W_X^{d-e}$ such that $z + A \subset u + v + B + G$.
    Otherwise, $z + A$ is \textit{indecomposable}.
\end{definition}

Indecomposable translates are ``new", while decomposable translates should be thought of as accounted for by lower degree translates. For a discussion on how \zcref{def: decomposable intro} arises from considering degree-$d$ points, see \zcref{subsec: definition discussion}.

\begin{thm}\label{thm: decomposable and closed implies P1}
    Let $x \in X$ be a degree-$d$ point and let $A$ be an abelian variety such that $[x] + A \subset W_X^d$. If $x$ is $\mathbb{P}^1$-isolated, then $[x] + A$ is indecomposable.
\end{thm}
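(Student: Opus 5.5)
We argue by contraposition: assuming $[x] + A$ is decomposable, we show that $x$ is not $\mathbb{P}^1$-isolated. Here $\mathbb{P}^1$-isolated means that the fiber of the Abel--Jacobi map $X^{(d)} \to W_X^d$ through the divisor $x$ is just the point $\{x\}$, i.e. $h^0(\mathcal{O}_X(x)) = 1$. Equivalently, no other effective divisor is linearly equivalent to $x$.

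Suppose there are $e<d$ and translates $u + B \subset W_X^e$, $v + G \subset W_X^{d-e}$ with $[x] + A \subset u + v + B + G$. Specializing to $0 \in A$ gives
\[
[x] = u + v + b + g
\]
for some $b \in B$ and $g \in G$. Since $u + b \in W_X^e$ and $v + g \in W_X^{d-e}$, we can choose effective divisors $D_1$ of degree $e$ and $D_2$ of degree $d-e$, defined over $\overline{k}$, whose classes are $u+b$ and $v+g$. Then $D_1 + D_2$ is an effective divisor linearly equivalent to $x$.

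The key point is that $x$ is a closed point of degree $d$, so over $\overline{k}$ it is the sum of $d$ distinct conjugate geometric points $x_1 + \dots + x_d$. Since $x$ is $\mathbb{P}^1$-isolated, the only effective divisor in its class is $x$ itself, so
\[
D_1 + D_2 = x_1 + \dots + x_d .
\]
Hence $D_1$ is a sum of exactly $e$ of the $x_i$, with $0 < e < d$ (I assume $e \geq 1$, since the $e=0$ case is trivial and the definition presumably intends $0<e<d$).

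This is where the freedom in $b$ is used. Ranging over all $a \in A$ gives $[x] + a = u + v + b(a) + g(a)$, where $b(a)$, $g(a)$ need not be unique, but the set of pairs is closed. Taking $a=0$ gives only finitely many splittings of $x$ as a sum of sub-multisets, so nothing is forced yet. To get a contradiction, I would use Galois: $[x]$ is $k$-rational, but $u,v,B,G$ may only be defined over $\overline{k}$. The argument still goes through because the $\mathrm{Gal}(\overline{k}/k)$-orbit of the chosen subset $\{x_i\}_{i\in I}$ has size greater than one, while the decomposition is only geometric.

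I expect this step to be the main obstacle: using the abelian varieties $B$ and $G$, of which at least one must be positive-dimensional when $\dim A>0$, to show that the locus $u + B$ contains the class of $\sum_{i \in I} x_i$ and also moves it in a nontrivial family. My plan is as follows.
\begin{enumerate}
\item Write $A$ as the image of a subvariety of $B \times G$ under addition.
\item Consider the closure $Z$ of the set of $(D_1, D_2) \in X^{(e)} \times X^{(d-e)}$ with $[D_1] \in u+B$, $[D_2] \in v+G$ and $[D_1+D_2] \in [x]+A$.
\item Observe that the addition map $Z \to X^{(d)}$ has image containing the preimage of $[x]+A$. That preimage is a $\dim A$-dimensional component through $x$, because $x$ is $\mathbb{P}^1$-isolated, so the Abel--Jacobi map is a local isomorphism there.
\item Conclude that near $x$ the family $[x]+A$ lifts to deformations $D_1(t)+D_2(t)$ in which the point $x$ splits as $D_1(0)+D_2(0)$.
\item Show that Galois conjugation permutes such splittings while preserving the tangent directions of $A$, which are $k$-rational. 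Summing over conjugates should force the tangent space $T_0A$ to lie in a proper sub-sum $T(B)+T(G)$ that is incompatible with $x$ being a degree-$d$ point.
\end{enumerate}
I would first try to make this last step precise using the description of the tangent space of $W_X^d$ at $[x]$ as $H^0(K_X(-x))^\perp$, combined with the splitting of $x$ into the two parts.
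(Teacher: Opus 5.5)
\paragraph{There is a genuine gap.} Your opening reduction is correct: since $h^0(x)=1$, any splitting $[x]=(u+b)+(v+g)$ gives an equality of divisors $D_1+D_2=x_1+\dots+x_d$ over $\overline{k}$. After that, however, the proposal is a plan rather than a proof, and the plan rests on a false premise.

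You explicitly allow $u+B$ and $v+G$ to be defined only over $\overline{k}$ and assert that ``the argument still goes through''. In the statement, ``decomposable'' means decomposable over $k$; geometric decomposability is a separate, weaker notion. The version with geometric decompositions is false in general. The paper's subsection on indecomposable but geometrically decomposable translates (the corollary on common subfields, and VV26 Example 5.5.3) is about exactly this situation: $\mathbb{P}^1$-isolated degree-$d$ points on positive-rank translates that decompose only over $\overline{k}$. The paper's proposition on indecomposable translates with $h^0(z)=1$ and $A(k)$ dense produces infinitely many such points.

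So any argument that ignores the $k$-rationality of $u+B$ and $v+G$ must fail. Your step (5) in fact tacitly needs it: $\sigma\in\mathrm{Gal}(\overline{k}/k)$ permutes the splittings relative to the pair $(u+B,v+G)$ only if that pair is Galois-stable. And ``should force $T_0A$ to lie in a proper sub-sum'' derives no actual contradiction.

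\paragraph{What is missing.} With $u+B$ and $v+G$ defined over $k$, each $\sigma(D_1)+\sigma(D_2)=x$ is again a splitting, with $[\sigma D_1]\in u+B$ and $[\sigma D_2]\in v+G$. Galois acts transitively on $x_1,\dots,x_d$ and $0<e<d$, so some $\sigma$ moves $D_1$. Since $h^0(D_1)\le h^0(x)=1$, this gives $[\sigma D_1]\neq[D_1]$, so the sum map $B\times G\to B+G$ is not injective. The paper phrases this contrapositively: if the sum map were injective, the decomposition of $[x]$ would be unique, hence Galois-invariant and $k$-rational. Then $x$ would be a sum of lower-degree effective $k$-divisors, contradicting the lemma characterizing degree-$d$ points.

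The substantive step, for which your plan has no counterpart, is that non-injectivity is incompatible with $h^0(x)=1$. The paper proves $u+v+B+G\subset W_X^{d,1}$; this set is closed, so it contains $[x]$.
\begin{itemize}
\item For positive-dimensional kernel, it uses a dimension count for $\Sym_X^e\times\Sym_X^{d-e}\to\Sym_X^d$.
\item For finite kernel, it refines $u+B$ and $v+G$ into geometrically indecomposable translates whose general $\ell$-points are closed points over a suitable finite $\ell/k$. Shifting by a nonzero kernel element then produces two distinct linearly equivalent effective divisors.
\end{itemize}

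\paragraph{An alternative finish.} You could also conclude directly from your own observation that $x$ is reduced. For general $(b,g)\in B\times G$, compare the effective representatives of $([D_1]+b)+([D_2]+g)$ and $([\sigma D_1]+b)+([\sigma D_2]+g)$, which have the same class.
\begin{itemize}
\item If they differ generically, then $h^0\ge 2$ on a dense subset of $u+v+B+G$, hence at $[x]$.
\item If they agree generically, then $D_2-\sigma(D_2)=\sigma(D_1)-D_1$ is constant in the family. This forces its nonzero positive part $P$ to lie under $D_2$ and, by closedness, under $D_1$. Then $x\ge 2P$, contradicting reducedness.
\end{itemize}
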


The proof of \zcref{thm: decomposable and closed implies P1} is contained in \zcref{sec: geometry and wp}, along with some corollaries. There is a partial converse to \zcref{thm: decomposable and closed implies P1}: one should expect that an indecomposable abelian translate contains many degree-$d$ points, see \zcref{prop: zariski dense k points implies zariski dense degree d points} for a precise statement. As a consequence of \zcref{thm: decomposable and closed implies P1} and \zcref{prop: zariski dense k points implies zariski dense degree d points}, we describe when a curve has infinitely many degree-$d$ points over some extension $k'/k$.

\begin{thm}\label{thm: wp description intro}
    The following are equivalent:
    \begin{enumerate}
        \item There exists a finite extension $k'/k$ such that $X/k'$ has infinitely many degree-$d$ points.
        \item There exists a degree-$d$ map $X/\overline{k} \to \mathbb{P}^1/\overline{k}$ or there exists a positive dimensional indecomposable abelian translate in $W_X^d/\overline{k}$.
    \end{enumerate}
\end{thm}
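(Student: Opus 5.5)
We prove the two implications separately, combining \zcref{thm: decomposable and closed implies P1} with the partial converse \zcref{prop: zariski dense k points implies zariski dense degree d points} and the structure theorem for infinitely many degree-$d$ points \cite{BELOV2019}, \cite{VV26}.

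\textbf{(1) $\Rightarrow$ (2).} Suppose $X/k'$ has infinitely many degree-$d$ points. By \cite{BELOV2019}*{Theorem 4.2} and \cite{VV26}, applied over $k'$, there is a degree-$d$ point $x$ of $X/k'$ that is either $\mathbb{P}^1$-parametrized or $AV$-parametrized.

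If $x$ is $\mathbb{P}^1$-parametrized, then $h^0([x]) \geq 2$ and the complete linear system gives a degree-$d$ map $X \to \mathbb{P}^1$ after removing base points. This map is defined over $k'$, hence over $\overline{k}$, and base-point freeness should follow from $x$ being a closed point of exact degree $d$.

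So assume $x$ is $AV$-parametrized and not $\mathbb{P}^1$-parametrized. Then there is a positive-dimensional abelian variety $A$ with $[x]+A \subset W_X^d$. We may also assume $x$ is $\mathbb{P}^1$-isolated. Otherwise some point in the same linear system is $\mathbb{P}^1$-parametrized, and we return to the previous case. Strictly, $\mathbb{P}^1$-isolated is a property of $x$, which we separate via the standard dichotomy. \zcref{thm: decomposable and closed implies P1}, applied over $k'$, then says $[x]+A$ is indecomposable over $k'$.

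The main obstacle is to upgrade this to indecomposability over $\overline{k}$. If $[x]+A \subset u+v+B+G$ over $\overline{k}$, all the data live over a finite extension $k''/k'$. Over $k''$, the point $x$ may split into points of smaller degree, so we cannot simply reapply \zcref{thm: decomposable and closed implies P1}. To get around this, I would choose the point $x$ differently from the start. Among the infinitely many degree-$d$ points on the translate, which are Zariski dense in some translate by Faltings, pick one that stays of degree $d$ over $k''$. Such points exist because $k''$ is a fixed finite extension and $k'(x)$ varies: only finitely many subfields lie inside the Galois closure of $k''$, and a Hilbert-irreducibility style density argument avoids them. Alternatively, the decomposition over $k''$ can be used directly: the sum map $W^e \times W^{d-e} \to W^d$ restricted to $(u+B)\times(v+G)$ would exhibit a dense set of points of $[x]+A$ as reducible effective divisors over $k''$. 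This contradicts degree-$d$ points being dense in $[x]+A$ over a suitable finite extension, by the argument in the proof of \zcref{thm: decomposable and closed implies P1}.

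\textbf{(2) $\Rightarrow$ (1).} A degree-$d$ map $f\colon X \to \mathbb{P}^1$ defined over $\overline{k}$ is defined over some finite $k'$. By Hilbert irreducibility, infinitely many fibers $f^{-1}(t)$, $t \in \mathbb{P}^1(k')$, are irreducible, giving infinitely many degree-$d$ points.

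For a positive-dimensional indecomposable translate $z+A$, choose $k'$ so that $z$, $A$, and the inclusion are defined over $k'$, and so that $A(k')$ is Zariski dense in $A$. The density is possible after a finite extension, for instance by adjoining a point of infinite order generating a dense subgroup. Then \zcref{prop: zariski dense k points implies zariski dense degree d points} gives Zariski-dense, hence infinitely many, degree-$d$ points of $X/k'$.
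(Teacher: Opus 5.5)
Your (2)$\Rightarrow$(1) is the paper's argument: Hilbert irreducibility for the map, and \zcref{cor: indecomp plus dense implies AV param} for the translate. Note that \zcref{prop: zariski dense k points implies zariski dense degree d points} assumes $h^0(X,z)=1$, so the case $h^0\geq 2$ has to be handled separately, as that corollary does.

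The genuine gap is in (1)$\Rightarrow$(2). There you try to prove that an $AV$-parametrized, $\mathbb{P}^1$-isolated $x$ lies on a \emph{geometrically} indecomposable translate, and that claim is false in general. Translates that are indecomposable over $k'$ but decomposable over $\overline{k}$ exist (see \cite{VV26}*{Example 5.5.3}), and for them neither of your repairs can work.

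Suppose $[x]+A$ decomposes over a Galois extension $\tilde{k}$. Since $h^0$ is unchanged by base extension, \zcref{thm: decomposable and closed implies P1} applied over $\tilde{k}$ shows that \emph{no} degree-$d$ point of $X/k'$ representing a general element of $[x]+A(k')$ remains a degree-$d$ point over $\tilde{k}$. This is exactly the proof of \zcref{cor: common subfield}. So there is no point ``that stays of degree $d$ over $k''$'' to choose: the splitting is forced for every such point, and no density or Hilbert-irreducibility argument can avoid it. Your second alternative gives no contradiction either. Density of degree-$d$ points over $k'$ in the translate is perfectly compatible with each of them being a sum of lower-degree divisors over $k''$.

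The missing idea is that in the geometrically decomposable case you must land in the \emph{first} disjunct of (2), not the second. The paper does this in the proof of \zcref{thm: geometry AV potential degree set}:
\begin{enumerate}
\item After shrinking $A$ so that $A(k')$ is dense, \zcref{lem: geometric decomposition degree divides} shows that a decomposition $[x]+A\subset u+v+B+G$ over $\overline{k}$ can be chosen with $e\mid d$. Over a Galois field of decomposition, each degree-$d$ point splits into conjugate pieces of a common degree dividing $d$, and Faltings's theorem plus density singles out one such $e$.
\item With $n=d/e$, the argument of \cite{VV26}*{Proposition 5.2.1} shows that $|nu|$ is a base-point-free linear system of degree $d$. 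This gives a degree-$d$ map $X/\overline{k}\to\mathbb{P}^1/\overline{k}$.
\end{enumerate}
Thus the paper proves $\wp_{AV}(X/k)\subset\wp_{ind}(X/k)\cup\wp_{\mathbb{P}^1}(X/k)$ by converting geometric decomposability into a pencil, rather than by ruling it out.
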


Applying \zcref{thm: wp description intro} relies on determining if $W_X^d/\overline{k}$ contain a positive dimensional indecomposable translate. Existing results in the literature, such as \cite{HS1991}, \cite{Tucker/Debarre2002}*{Appendix A}, \cite{Derickx25}, \cite{KV2025}, or \cite{VV26}*{Theorem 6.0.1(2)}, can describe the abelian translates in $W_X^d/\overline{k}$ for $d$ roughly in the interval $[1, \sqrt{g}]$. We construct genus-7 curves $C$ for which we can show that $W_C^5/\overline{k}$ contains only decomposable translates.

\begin{thm}\label{thm: no new translates deg 5 intro}
    Let $E/k$ and $F/k$ be non-isogenous elliptic curves, let $F \to \mathbb{P}^1$ be a degree-2 map, let $E \to \mathbb{P}^1$ be a degree-3 map, and let $C/k$ be the curve $E \times_{\mathbb{P}^1} F$. If $C$ is smooth and $J(C) \sim E \times F \times A$, where $A/\overline{k}$ is simple, then $W_C^5/\overline{k}$ contains only decomposable translates.
\end{thm}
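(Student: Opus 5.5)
Throughout, work over $\overline{k}$. The curve $C = E\times_{\mathbb{P}^1}F$ comes with a degree-$2$ map $\pi_E\colon C\to E$ and a degree-$3$ map $\pi_F\colon C\to F$. Since $E$ and $F$ are elliptic, pulling back gives abelian translates
\begin{equation*}
[\pi_E^*(e)] + \pi_E^*\Pic^0(E) \subset W_C^2, \qquad [\pi_F^*(f)] + \pi_F^*\Pic^0(F) \subset W_C^3 .
\end{equation*}
The composite $C\to\mathbb{P}^1$ has degree $6$, and $g(C)=7$ (Riemann--Hurwitz, given that $C$ is smooth). The plan is to classify every positive-dimensional abelian translate $z+B\subset W_C^5$ and show it lies in a sum of lower-degree translates. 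A zero-dimensional translate $\{z\}$ is trivially decomposable: write $z=[p]+[D']$ with $p$ a point and $D'$ effective of degree $4$, and use $B=G=0$.

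\textbf{Step 1: bound the possible $B$.} Since $J(C)\sim E\times F\times A$ with $A$ simple and $E\not\sim F$, any abelian subvariety $B$ of $J(C)$ is isogenous to $E^a\times F^b\times A^c$ with $a,b,c\in\{0,1\}$. I expect $\dim A = 5$ here. For $c=1$ we would need $\dim B\ge 5$. I would exclude this with the standard bound that a translate of an abelian variety in $W^d_C$ has dimension at most $d/2$ (Debarre--Fahlaoui type, \cite{DF1993}), unless $C$ is hyperelliptic. The curve $C$ is not hyperelliptic, since it has a $g^1_3$ and a $g^1_2$ composed with an elliptic cover, and Castelnuovo--Severi excludes a $g^1_2$ in genus $7$. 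So $\dim B\le 2$, and therefore $B$ is one of $E'$, $F'$, or $E'\times F'$, where $E'\subset J(C)$ is the unique subvariety isogenous to $E$ (namely $\pi_E^*\Pic^0(E)$) and similarly $F'=\pi_F^*\Pic^0(F)$. Uniqueness holds because $\operatorname{Hom}(E,A)=\operatorname{Hom}(E,F)=0$.

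\textbf{Step 2: a one-dimensional $E'$-translate factors through $\pi_E$.} Let $z+E'\subset W_C^5$. I want to show the family of effective divisors contains a pullback $\pi_E^*D_0$ of a degree-$1$ divisor, i.e. that every divisor in the family has the form $\pi_E^*(e)+D''$ with $D''$ effective of degree $3$. The approach I would take is this: the map $E'\to W^5_C$ lifts (after replacing it by a curve of divisors) to a family $D_t$. Push forward along $\pi_E$ to get a family in $W^5_E$, and compare via the norm map. The $E'$-direction must be absorbed by a pulled-back summand $\pi_E^*(e_t)$. The alternative is a degree-$5$ map from some other curve dominating $E$ whose image translate is $E'$. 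By the Castelnuovo--Severi inequality, with $g=7$, a second elliptic quotient map $C\to E$ of degree $\le 5$ independent of $\pi_E$ is impossible. This yields
\begin{equation*}
z+E'\subset [\pi_E^*(e)]+E' + [D''], \qquad [D'']\in W^3_C,
\end{equation*}
which is decomposable with $e=2$. The analogous argument gives $z+F'\subset [\pi_F^*(f)]+F'+[D'']$ with $[D'']\in W^2_C$.

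\textbf{Step 3: two-dimensional translates, and the main obstacle.} For $B\sim E'\times F'$, the translate $z+B$ contains translates of both $E'$ and $F'$. By Step 2, a generic divisor in the family then contains a $\pi_E^*$-fibre and a $\pi_F^*$-fibre. Their total degree is $2+3=5$, so the generic divisor equals $\pi_E^*(e)+\pi_F^*(f)$. This exhibits $z+B$ inside the sum of the two basic translates, which is a decomposition with $e=2$.

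The main obstacle is Step 2: rigorously showing that every $E'$- or $F'$-translate arises from pullback, rather than from some exotic family not coming from a morphism. That is exactly the phenomenon the paper flags via \cite{DF1993}. My first attempt would use the theorem of Abramovich--Harris/Debarre--Fahlaoui: a translate of an elliptic curve in $W^d_C$ comes from a map $C\to E''$ of degree $\le d$. Combined with uniqueness of elliptic quotients up to isogeny, Castelnuovo--Severi would then pin that map down to $\pi_E$ or $\pi_F$, possibly composed with an isogeny. Degree considerations ($\le 5$) then force the map to be $\pi_E$ or $\pi_F$ itself.
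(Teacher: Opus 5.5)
The genuine gap is Step 2, which carries essentially all of the content of the theorem. Your Step 1 agrees with the paper: the Debarre--Fahlaoui bound leaves only cosets of $E'$, $F'$ and $E'+F'$. One slip there: $C$ has no $g^1_3$, since there are no maps $C\to\mathbb{P}^1$ of degree $2$, $3$ or $5$; you mean the degree-$3$ map to $F$.

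In Step 2, the result you plan to invoke says that an elliptic curve in $W^d_C$ comes from a map $C\to E''$ of degree at most $d$. That is the Abramovich--Harris conjecture. It is not a theorem for $d=5$, and Debarre--Fahlaoui's examples show that abelian varieties in $W^d$ need not arise from morphisms. The paper flags exactly this phenomenon.

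More importantly, even a true statement of that shape would be vacuous here. $C$ already has $\pi_E$ and $\pi_F$, so knowing that some low-degree map exists says nothing about a given coset. It does not tell you whether $z+E'\subset W^5_C$ lies in $W^3_C+\pi_E^*\Pic^1_E$, or whether $z+F'$ lies in $W^2_C+\pi_F^*\Pic^1_F$. What is needed is a classification of all such cosets. Your sentence ``the $E'$-direction must be absorbed by a pulled-back summand'' is precisely the claim to be proved.

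Two smaller points. Castelnuovo--Severi does not forbid a second independent elliptic map of degree $3$ to $5$ in genus $7$: $\pi_F$ itself attains $7\le 2+3+2$. Step 3 also ignores that a $\pi_E$-fibre and a $\pi_F$-fibre can share a point, so degree counting alone does not give $D=\pi_E^*(e)+\pi_F^*(f)$.

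The missing idea is to control complete linear systems directly by Riemann--Roch.

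\textbf{Cosets of $E'$.} The paper first shows that if $\mathcal{L}=\pi_E^*\mathcal{L}_E$ with $\deg\mathcal{L}_E<g$ and $\mathcal{L}\neq\omega_C$, then every effective representative of $\mathcal{L}$ is a pullback. When $\deg\mathcal{L}_E=g-1$ one has $h^0(C,\mathcal{L})=g-1=h^0(E,\mathcal{L}_E)$; lower degrees follow by adding a pulled-back divisor. Now suppose some $D$ in a coset $[D]+E'\subset W^d_C$ contains no fibre, and let $\iota$ be the covering involution. For every other $D'$ in the coset, $D'+\iota(D)$ would be a pullback, which forces $D'=D$.

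\textbf{Cosets of $F'$.} For degree-$3$ covers the analogous general statement fails. The paper instead uses the specific geometry of $C$:
\begin{itemize}
\item $\omega_C\in\pi_F^*\Pic^4_F$ and $W^{5,1}_C=W^1_C+\pi_E^*\Pic^2_E$;
\item a description of the sections of general bundles in $[p+q]+\pi_F^*\Pic^2_F$;
\item a description of the sections of $\mathcal{O}(y)\otimes\pi_E^*\theta^*\mathcal{O}_{\mathbb{P}^1}(1)$, where $\theta\colon E\to\mathbb{P}^1$ is the degree-$3$ map;
\item the map $p\mapsto\pi_F^*\pi_F(p)-p$.
\end{itemize}
With these it shows that an unexpected $F'$-coset would have to lie in the curve $\{[5p] : p\in C\}$. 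That curve is birational to $C$, which has genus $7$, so it cannot contain a translate of $F'$.

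Without an argument of this kind your Step 2 is unproved, and Step 3 inherits the gap.
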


In the text, we describe every positive dimensional abelian translate in $W_C^5/\overline{k}$ explicitly, see \zcref{thm: f+2 locus no new translates}.
\zcref{thm: no new translates deg 5 intro} is proved in \zcref{sec: bielliptic curves} and \zcref{sec: no a semigroup}. In \zcref{sec: bielliptic curves}, we investigate curves with a degree-2 map $\phi \colon X\to E$ to an elliptic curve. We show that for $d < g$, the only indecomposable translate of $\phi^*\Pic_E^0$ in $W_X^d$ is $\phi^* \Pic_E^1$. In \zcref{sec: no a semigroup}, we analyze translates of $\theta_C^* \Pic_F^0$, heavily relying on the geometry of $C$.

We apply \zcref{thm: wp description intro} and \zcref{thm: no new translates deg 5 intro} to a question of Viray and Vogt \cite{VV26}*{Question 7.1.1}: if $X/k$ has infinitely many degree-$d$ and degree-$e$ points, does there exist some finite extension $k'/k$ such that $X/k'$ has infinitely many degree-$d + e$ points?  We give a negative answer:

\begin{corollary}\label{thm: messed up degree set}
    There exists a curve $C/\mathbb{Q}$ such that for any number field $k$, $C/k$ has infinitely many degree-2 and degree-3 points, but only finitely many degree-5 points.
\end{corollary}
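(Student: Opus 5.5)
We will prove the corollary by exhibiting an explicit curve $C$ of the shape in \zcref{thm: no new translates deg 5 intro}, defined over $\mathbb{Q}$. We then verify the three degree conditions in turn.

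\textbf{Choosing the curve.} We take non-isogenous elliptic curves $E/\mathbb{Q}$ and $F/\mathbb{Q}$ of positive rank, for instance two curves with distinct $j$-invariants, one of which has CM and the other does not. We fix a degree-$3$ map $E \to \mathbb{P}^1$, for example projection from a rational point of a plane cubic model, and a degree-$2$ map $F \to \mathbb{P}^1$, the $x$-coordinate. We choose these so that their branch loci are disjoint, which makes $C = E \times_{\mathbb{P}^1} F$ smooth. By Riemann--Hurwitz the curve $C$ then has genus $7$.

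Since $C \to E$ has degree $2$ and $C \to F$ has degree $3$, the Jacobian decomposes as $J(C) \sim E \times F \times A$ with $A$ of dimension $5$. The main obstacle is certifying that $A/\overline{\mathbb{Q}}$ is simple. For this we would compute the characteristic polynomial of Frobenius on $J(C)$ at a few good primes $p$, by point counting, and divide out the factors coming from $E$ and $F$. If we find degree-$10$ factors that are irreducible and generate, at two primes, incompatible splitting behaviour, a standard argument shows $A$ is geometrically simple. An alternative is to show that $A$ has a prime of ordinary reduction where $\mathbb{Q}(\pi)$ is a CM field of degree $10$ with no proper CM subfield structure forcing a splitting. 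If a first choice fails, we vary the maps.

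\textbf{Degree $2$ and $3$.} For any number field $k$, the map $C \to E$ has degree $2$, so pulling back the infinitely many $k$-points of $E$ gives infinitely many points of degree at most $2$ on $C$. By Hilbert irreducibility for the double cover, together with the positive rank of $E$, infinitely many of these fibres are irreducible and so give degree-$2$ points. More directly, $\phi^*\Pic^1_E \subset W^2_C$ is a translate of a positive-rank elliptic curve. Because $C$ is not hyperelliptic, this forces infinitely many degree-$2$ points, using the Abramovich--Harris / BELOV-type results recalled in the introduction. The same argument with $C \to F$ gives infinitely many degree-$3$ points; here we additionally need that the fibres are generically irreducible, which again follows from Hilbert irreducibility over the positive-rank curve $F(k)$.

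\textbf{Degree $5$.} Suppose $C/k$ had infinitely many degree-$5$ points. Then by \zcref{thm: wp description intro}, either there is a degree-$5$ map $C_{\overline{k}} \to \mathbb{P}^1$, or $W^5_C/\overline{k}$ contains a positive-dimensional indecomposable translate. The latter is excluded by \zcref{thm: no new translates deg 5 intro}. For the former, we argue that $C$ has gonality $4$, coming from the degree-$2$ composite $C \to E$. We must then rule out a $g^1_5$, and we expect this follows from the explicit description of $W^5_C$ in \zcref{thm: f+2 locus no new translates}. If a $g^1_5$ existed, its image in $W^5_C$ would be a rational curve; we would combine this with Castelnuovo--Severi, which forbids a degree-$5$ map together with the degree-$2$ map to $E$ unless $7 \le 2\cdot 1 + 5\cdot 0 + (2-1)(5-1) = 6$, a contradiction. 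Hence only finitely many degree-$5$ points exist over every $k$.
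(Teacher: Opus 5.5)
Your overall route is the paper's: a fiber product $E\times_{\mathbb{P}^1}F$ of positive-rank, geometrically non-isogenous elliptic curves with disjoint branch loci, geometric simplicity of $A$ certified by Frobenius data at two primes, and then \zcref{thm: wp description intro} together with \zcref{thm: no new translates deg 5 intro} and Castelnuovo--Severi. The Castelnuovo--Severi step is exactly \zcref{lem: no g15}(1), and your inequality applies because $\gcd(2,5)=1$ rules out a common factorization.

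\textbf{The gap.} The statement is existential, and its content is the existence of one $C/\mathbb{Q}$ satisfying the hypotheses of \zcref{thm: no new translates deg 5 intro}, above all that $A/\overline{\mathbb{Q}}$ is simple. You never produce such a $C$. ``We would compute'' and ``if a first choice fails, we vary the maps'' give neither an example nor an argument that the search terminates. As written, your degree-$5$ conclusion therefore rests on an unverified hypothesis. The paper closes this by fixing a concrete curve:
\begin{itemize}
\item[] $E\colon Y^2Z=X^3+2XZ^2+Z^3$ with $\theta=[Y:Z]$, and $F\colon U^2W=V^3+VW^2+W^3$ with $\phi=[W:V]$;
\item[] a direct check that the branch loci are disjoint;
\item[] a Magma computation at the good primes $5$ and $7$ showing the numerator of the zeta function has an irreducible degree-$10$ factor $Q_p$ satisfying the hypotheses of Stoll's Lemma 3, with the fields generated by $Q_5$ and $Q_7$ linearly disjoint.
\end{itemize}
This forces $\operatorname{End}_{\overline{\mathbb{Q}}}(A)=\mathbb{Z}$. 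You need to commit to a specific curve and carry out such a verification.

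\textbf{Two smaller points.} First, Hilbert irreducibility is not available over an elliptic curve. By weak Mordell--Weil, $E(k)$ is a finite union of sets $P_i+2E(k)$, each the image of $E(k)$ under a degree-$4$ cover, so $E(k)$ is thin. The conclusion you want still holds for a simpler reason. A fiber of $\phi_C$ or $\theta_C$ over a $k$-point that is not a single closed point of full degree has degree $2$ or $3$, so it must contain a $k$-rational point of $C$. Since $C(k)$ is finite by Faltings, all but finitely many fibers over the infinite sets $E(k)$ and $F(k)$ are degree-$2$, respectively degree-$3$, points. This handles every number field $k$ directly, because $E(\mathbb{Q})$ and $F(\mathbb{Q})$ are already infinite. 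That is a slightly more elementary route than the paper's appeal to indecomposability of $\phi_C^*\Pic_E^1$ and $\theta_C^*\Pic_F^1$ with dense rational points.

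Second, projecting a plane cubic from a point lying on it gives a degree-$2$ map. For a degree-$3$ map you must project from a point off the curve, for instance by taking the $y$-coordinate as the paper does.
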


\subsection{Acknowledgments}

The author thanks Bianca Viray for invaluable feedback. Thanks as well to Bryan Boehnke, Maartin Derickx, and Mallory Dolorfino for comments on early drafts. Finally, the author thanks Julian Rosen for an extremely helpful Mathematics Stack Exchange post. This work was funded in part by NSF grant DGE-2140004.

\section{The geometry of the density degree set}\label{sec: geometry and wp}

Following \cite{VV26}, define the density degree set and the potential density degree set:
\begin{align*}
    \delta(X/k) \coloneqq \{d \mid \text{degree-} d \text{ points are Zariski dense} \} \text{ and }     \wp(X/k) \coloneqq \bigcup_{k'/k \text{ finite}} \delta(X/k').
\end{align*}
A closed point $x \in X$ is $\mathbf{\mathbb{P}^1}$\textbf{-parametrized} if there exists a map $\gamma \colon X \to \mathbb{P}^1$ and $t \in \mathbb{P}^1(k)$ such that $x = \gamma^*t$ \cite{VV26}*{Definition 1.3.1}. A closed point $x \in X$ is $\mathbf{AV}$\textbf{-parametrized} if there exists a positive rank abelian variety $A$ such that $[x] + A \subset W_X^d$ \cite{VV26}*{Definition 4.3.1}. As in \cite{VV26}*{Definition 5.0.2}, define two subsets of the density degree set:
\begin{align*}
    \delta_{\mathbb{P}^1}(X/k) &\coloneq \{ \deg(x) \mid x \text{ is } \mathbb{P}^1\text{-parametrized} \}, \\
    \delta_{AV}(X/k) & \coloneq \{ \deg(x) \mid x \text{ is } AV\text{-parametrized}\}.
\end{align*}
By \cite{BELOV2019}*{Theorem 4.2}, $\delta(X/k)$ equals the union $\delta_{AV}(X/k) \cup \delta_{\mathbb{P}^1}(X/k)$.

\subsection{Decomposable and indecomposable translates}

Let $z \in W_X^d(k)$ and suppose there exists a positive dimensional abelian variety $A \leq \Pic_X^0$ such that $z + A \subset W_X^d$. We describe when there exists a degree-$d$ point $x \in X$ such that $[x] \in z + A$.

We begin by considering which divisors are \textit{not} degree-$d$ points. By \cite{VV26}*{Lemma 2.1.3}, a degree-$d$ divisor $D$ which is not a closed point is contained in $R$, the set of sums of lower degree effective divisors. We will apply Faltings's Subvarieties of an Abelian Variety Theorem \cites{Faltings1991,Faltings1994}, which we state for convenience:
\begin{thm}[Faltings's Subvarieties of AV Theorem]
    Let $W \subset P$ be a closed subvariety of an abelian variety over a number field $k$. Then
    \begin{align*}
        W(k) = \bigcup_{i=1}^N (w_i + A_i(k)),
    \end{align*}
    where $w_i \in W(k)$ and $A_i$ are abelian subvarieties $A_i \subset P$ such that $w_i + A_i \subset W$.
\end{thm}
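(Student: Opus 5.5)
This statement is Faltings's theorem, quoted from \cite{Faltings1991} and \cite{Faltings1994}, so in the paper it is used as a black box rather than proved. My plan is to recall how the stated form follows from Faltings's core result, and to take that core result itself as given. The core input is the following: if $Z \subset P$ is a geometrically irreducible closed subvariety over $k$ such that $Z(k)$ is Zariski dense in $Z$, then $Z_{\overline{k}}$ is a translate of an abelian subvariety of $P_{\overline{k}}$. Proving this is the genuinely hard step. It rests on the Vojta--Faltings method: Diophantine approximation on products of $Z$, constructing auxiliary sections with small height, and Faltings's product theorem. I do not intend to reprove it. All of the real difficulty lives in this step, and the rest of the argument is a formal reduction.

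First, let $\overline{W(k)}$ be the Zariski closure of $W(k)$ in $W$. Because $W(k)$ is invariant under $\mathrm{Gal}(\overline{k}/k)$, this closure is defined over $k$. Decompose it into $k$-irreducible components $Z_1, \dots, Z_N$, of which there are finitely many. Each $Z_i$ is the closure of $Z_i(k)$, since $W(k) \subset \bigcup_i Z_i$ and the $Z_i$ are minimal.

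Second, I claim each $Z_i$ is geometrically irreducible. Its $k$-points are dense, and a $k$-point lying on only one geometric component is fixed by Galois, so it forces that component to be Galois-stable. The geometric components are permuted transitively by Galois. If there were more than one, every $k$-point would lie on an intersection of components, and these intersections form a proper closed subset, contradicting density.

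Applying the core result, $Z_i = z + B_i$ over $\overline{k}$ for some abelian subvariety $B_i$. Choose $w_i \in Z_i(k)$. Then $A_i := Z_i - w_i$ is an abelian subvariety of $P$ that is stable under Galois, and hence is defined over $k$.

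Finally, I compare rational points: $Z_i(k) = w_i + A_i(k)$, because translation by the rational point $w_i$ is a $k$-isomorphism $A_i \to Z_i$. Combining this with $W(k) = \bigcup_i Z_i(k)$ and $w_i + A_i = Z_i \subset W$ gives the stated equality. Translates with $A_i = 0$ simply account for finitely many isolated points.
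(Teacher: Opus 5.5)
Your proposal is correct and matches how the paper handles this statement: the paper gives no proof and simply quotes Faltings's theorem from the literature, just as you treat the Diophantine core as a black box. Your added reduction is valid. It takes the $k$-irreducible components of $\overline{W(k)}$, shows each is geometrically irreducible via the Galois-transitivity argument, and identifies each with the $k$-rational translate $w_i + A_i$. This simply passes between two equivalent standard formulations of the theorem; the converse direction follows immediately from irreducibility.
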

By Faltings's Theorem on the Subvarieties of an Abelian Variety, for any $e < d$, $\overline{W_X^e(k)}$ can be written as finite union of abelian translates with dense $k$ points. Thus,
\begin{align*}
    \overline{R} = \bigcup_{e < d} \overline{W_X^{d-e}(k) + W_X^e(k)} =  \bigcup_{e < d} \overline{W_X^{d-e}(k)} + \overline{W_X^{e}(k)} = \bigcup_{e < d} \bigcup_{i=1}^n \bigcup_{j=1}^m z_i + z_j + B_i^{d-e} + B_j^{e}.
\end{align*}
As $z + A$ is irreducible and the unions are finite, either there exists some $e,i,j$ such that $z + A \subset z_i + z_j + B_i^{d-e} + B_j^{e}$ or $(z + A) \smallsetminus \overline{R}$ is a non-empty dense open set in $z + A$. This dichotomy is captured by \zcref{def: decomposable intro}. If $(z+A)\smallsetminus \overline{R}$ is non-empty, then $(z + A) \smallsetminus R$ is as well, and if $(z + A) \smallsetminus R$ contains a $k$-point, it is a degree-$d$ point. For a sufficient condition on an indecomposable translate containing a degree-$d$ point, see \zcref{prop: zariski dense k points implies zariski dense degree d points}. We make an additional definition for the base change to $\overline{k}$.

\begin{definition}\label{def: geo decomposable}
    An abelian translate is \textit{geometrically decomposable} if the base change to $\overline{k}$ is decomposable, and \textit{geometrically indecomposable} otherwise.
\end{definition}

\begin{remark}\label{rem: when is translate indecomp?}
    In general, determining if $z + A$ is indecomposable can be challenging. One sufficient condition is: If for all $e < d$, $W_X^e$ does not contain any abelian varieties of dimension greater than or equal to $(\dim A)/2$, then $z + A$ is indecomposable. Thus, by the dimension bounds of \cite{DF1993}*{Proposition 3.3}, $\Pic_X^g$ is a geometrically indecomposable abelian translate.
\end{remark}

\begin{remark}
    If $A$ is zero dimensional, then $z + A \subset W_X^d$ is geometrically indecomposable if and only if $d = 1$.
\end{remark}

\subsection{Degree-$d$ points in indecomposable translates}\label{subsec: definition discussion}

An indecomposable translate may not contain a degree-$d$ point. Indecomposable translates with enough points, however, will necessarily contain many degree-$d$ points:

\begin{proposition}\label{prop: zariski dense k points implies zariski dense degree d points}
    If $z + A \subset W_X^d$ is indecomposable, $h^0(X,z) = 1$, and $A(k)$ is dense in $A$, then there exists an open set $U \subset A$ such that $U(k)$ is Zariski dense in $A$ and every element of $z + U(k)$ is uniquely represented by a degree-$d$ point over $k$.
\end{proposition}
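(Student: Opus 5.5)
The strategy is to combine the dichotomy from the discussion of $\overline{R}$ with a uniqueness statement coming from $h^0(X,z)=1$. Since $z+A$ is indecomposable, it is not contained in any $z_i+z_j+B_i^{d-e}+B_j^{e}$. As argued above, the set $V_1 \coloneqq (z+A)\smallsetminus \overline{R}$ is then a nonempty, hence dense, open subset of $z+A$.

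Next I use upper semicontinuity of $h^0$ on the family of line bundles parametrized by $z+A$. Pull back a Poincaré bundle on $X\times \Pic_X^d$, restricted to $X\times(z+A)$ (after a finite extension if necessary; the locus in question is Galois-stable, so it descends). The set $V_2 \coloneqq \{w\in z+A : h^0(X,w)\le 1\}$ is open, and it contains $z$ because $h^0(X,z)=1$. Since $z+A\subset W_X^d$, we have $h^0\ge 1$ everywhere, so $h^0=1$ on $V_2$. Set $U\coloneqq (V_1\cap V_2)-z\subset A$. This is a nonempty open set, as $A$ is irreducible and both sets are nonempty opens in $z+A$. Because $A(k)$ is dense and $U$ is a dense open, $U(k)$ is Zariski dense in $A$.

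Now take $a\in U(k)$ and put $w=z+a$. The class $w$ is a $k$-rational point of $W_X^d$ with $h^0(X,w)=1$, so its complete linear system $|w|\cong\mathbb{P}^0$ is a single $k$-point. Hence there is a unique effective divisor $D$ with $[D]=w$, and $D$ is defined over $k$. (Rationality of the line bundle itself is unobstructed once we know $|w|$ has a $k$-point: the class is $k$-rational, and the unique member of $|w|$ is Galois-invariant, hence a $k$-rational divisor, which gives a line bundle over $k$.) If $D$ were not a closed point of degree $d$, then by \cite{VV26}*{Lemma 2.1.3} it would be a sum of lower-degree effective $k$-divisors, so $w\in R\subset \overline{R}$, contradicting $w\in V_1$. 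Thus $D$ is a degree-$d$ point, and uniqueness of its representation is exactly $h^0=1$.

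The main obstacle is the rationality issue just noted: showing that a $k$-point of $\Pic_X^d$ with $h^0=1$ is represented by a $k$-rational effective divisor, since $\Pic_X(k)$ may differ from $\operatorname{Pic}(X)$ when $X(k)=\emptyset$. I would handle it with the Galois-invariance argument above, applied directly to the unique effective divisor, which sidesteps the Brauer obstruction entirely. A secondary point is making the semicontinuity rigorous without a global Poincaré bundle over $k$. Here I would work over $\overline{k}$ and observe that $V_2$ is Galois-stable, hence defined over $k$.
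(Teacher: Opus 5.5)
Your proposal is correct and follows essentially the same route as the paper. The paper also removes the proper closed set $\overline{R}\cap(z+A)$, which is proper by indecomposability via Faltings. It intersects the complement with the open upper-semicontinuity locus where $h^0=1$, and it gets density of $U(k)$ from density of $A(k)$ plus irreducibility. Your Galois-invariance argument for the $k$-rationality of the unique effective representative is the content the paper imports from \cite{VV26}*{Lemma 3.2.1}, and your direct appeal to \cite{VV26}*{Lemma 2.1.3} replaces the paper's small lemma characterizing degree-$d$ points as classes that are not sums.
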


\begin{remark}
    If $A$ has positive dimension, then the set $z + U(k)$ in \zcref{prop: zariski dense k points implies zariski dense degree d points} is infinite, so $d \in \delta_{AV}(X/k)$. Note we do not assume that $z$ is represented by a degree-$d$ point, only that  $h^0(X,z) = 1$, which by \cite{VV26}*{Lemma 3.2.1} implies that there exists a unique effective divisor $D$ over $k$ such that $[D]$ equals $z$.
\end{remark}

We prove three lemmas from which \zcref{prop: zariski dense k points implies zariski dense degree d points} follows quickly.

\begin{lem}\label{lem: degree d point if and only if not a sum}
    Let $z \in W_X^d(k)$ be such that $h^0(X,z) = 1$. The unique effective representative $D$ for $z$ is a degree-$d$ point if and only if $z$ is not in the image of the sum map $W_X^e(k) \times W_X^{d-e}(k)$ for any $0 < e < d$.
\end{lem}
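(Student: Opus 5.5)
We prove both directions by contrapositive. Throughout we use the fact recalled in the remark following \zcref{prop: zariski dense k points implies zariski dense degree d points}: by \cite{VV26}*{Lemma 3.2.1}, the hypothesis $h^0(X,z)=1$ means $z$ has a unique effective representative $D$, and this $D$ is defined over $k$. The whole argument consists of transferring between a decomposition of the $k$-divisor $D$ and a decomposition of the class $z$.

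\textbf{Sum implies not a point.} Suppose $z = z_1 + z_2$ with $z_1 \in W_X^e(k)$, $z_2 \in W_X^{d-e}(k)$ and $0<e<d$.
\begin{enumerate}
    \item Apply \cite{VV26}*{Lemma 3.2.1} to each $z_i$ (or its proof: a $k$-rational point of $W_X^e$ is represented by an effective divisor defined over $k$). This gives effective $k$-divisors $D_1$ and $D_2$ of degrees $e$ and $d-e$ with $[D_i]=z_i$. This is the step I expect to need the most care, since a $k$-point of $W_X^e$ need not a priori be the class of a $k$-rational effective divisor. I would invoke the cited lemma, or the standard argument: when $X(k)\neq\emptyset$ the line bundle descends, and the representability statement in \cite{VV26} handles the general case.
    \item Then $D_1 + D_2$ is an effective $k$-divisor with class $z$.
    \item By uniqueness of the effective representative, $D = D_1 + D_2$.
    \item A closed point of degree $d$ is a prime divisor over $k$. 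It cannot be written as a sum of two nonzero effective $k$-divisors, since each $D_i$ is supported on closed points of $X$. Hence $D$ is not a degree-$d$ point.
\end{enumerate}

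\textbf{Not a point implies sum.} Suppose $D$ is not a degree-$d$ point. As noted before \zcref{def: geo decomposable} (\cite{VV26}*{Lemma 2.1.3}), $D$ is then a sum of lower-degree effective divisors over $k$. Concretely, write $D = \sum n_P P$ over closed points $P$. Since $D$ is not a single closed point of multiplicity one, we can split $D = D_1 + D_2$ into nonzero effective $k$-divisors of degrees $e$ and $d-e$ with $0<e<d$. Then $z = [D_1] + [D_2]$ lies in the image of $W_X^e(k)\times W_X^{d-e}(k)$, which completes the proof.
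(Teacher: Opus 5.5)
Your direction ``$D$ not a point $\Rightarrow$ $z$ is a sum'' is fine, and it is more self-contained than the paper, which just cites \cite{VV26}*{Lemma 4.3.7}. Since $D$ is a $k$-divisor that is not a single reduced closed point, it splits into two nonzero effective $k$-divisors. Your other direction follows the paper's outline step for step. However, step 1 has a genuine gap, at exactly the point you flagged.

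As this paper uses it, \cite{VV26}*{Lemma 3.2.1} requires the hypothesis $h^0(X,z_i)=1$, and you never establish it. Your parenthetical substitute is false in general: a $k$-point of $W_X^e$ need not be the class of an effective $k$-divisor. The fibre of $\Sym_X^e\to W_X^e$ over a $k$-point is a Brauer--Severi variety of dimension $h^0-1$, and when $X(k)=\emptyset$ it can fail to have a $k$-point. This already happens on a genus-one curve whose period is smaller than its index. The fibre is a single, hence rational, point exactly when $h^0=1$. Your fallback via descent of the line bundle only covers $X(k)\neq\emptyset$, which is not assumed. And ``the representability statement in \cite{VV26} handles the general case'' is not an argument.

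The missing idea is the one-line observation the paper makes. Over $\overline{k}$, $z_2$ is effective, so $\mathcal{O}(z_2)$ has a nonzero section $s$, and $t\mapsto t\otimes s$ injects $H^0(X,z_1)$ into $H^0(X,z)$. Hence $1\le h^0(X,z_1)\le h^0(X,z)=1$, and symmetrically $h^0(X,z_2)=1$. Now \cite{VV26}*{Lemma 3.2.1} genuinely applies and produces $D_1,D_2$ over $k$, after which your steps 2--4 go through unchanged.

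An equivalent route: every geometric effective representative $D_1'$ of $z_1$ satisfies $D_1'+D_2'=D$ for any representative $D_2'$ of $z_2$, so $D_1'\le D$. That leaves finitely many choices, hence exactly one, since linear systems are connected. Being Galois-stable, it is defined over $k$.
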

\begin{proof}
    The ``if" statement is \cite{VV26}*{Lemma 4.3.7}. For the converse, we prove the contrapositive. Suppose there exists some $e < d$ and $z_1 \in W_X^e(k)$, $z_2 \in W_X^{d-e}(k)$ such that $z = z_1 + z_2$. As tensoring induces an injective map on global sections, for $1 \leq i \leq 2$, $h^0(X, z_i) \leq h^0(X,z)$, and hence $h^0(X,z_i) = 1$. By \cite{VV26}*{Lemma 3.2.1}, $z_1$ and $z_2$ are represented by effective divisors $D_1$ and $D_2$ over $k$. Thus, $D = D_1 + D_2$ is the unique effective representative for $z$, which is not a degree-$d$ point.
\end{proof}

\begin{lem}\label{lem: indecomposable implies open degree-d points}
    If $z + A \subset W_X^d$ is indecomposable and $h^0(X,z) = 1$, then there is a Zariski open $U \subset A$ such that every element of $z + U(k)$ has a unique effective representative over $k$ which is a degree-$d$ point. 
\end{lem}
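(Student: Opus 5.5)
Two things have to be produced: an open set $U$ avoiding the translates that make up $\overline{R}$, and uniqueness of effective representatives on $U$. We then finish with \zcref{lem: degree d point if and only if not a sum}.

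\emph{Step 1 (avoiding sums).} As in the discussion before \zcref{def: geo decomposable}, Faltings's Subvarieties of AV Theorem gives, for each $0<e<d$, finitely many translates with
\begin{align*}
\overline{W_X^e(k)} = \bigcup_i (u_i + B_i), \qquad u_i + B_i \subset W_X^e .
\end{align*}
Hence $\overline{R}$, the closure of the image of $\bigcup_{0<e<d} W_X^e(k)\times W_X^{d-e}(k)$ under the sum map, is a finite union of translates $u_i+v_j+B_i+G_j$. Each of these lies in $u_i+B_i+v_j+G_j$, with $u_i+B_i\subset W_X^e$ and $v_j+G_j\subset W_X^{d-e}$. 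Indecomposability of $z+A$ therefore says that $z+A$ is contained in none of them. Since $z+A$ is irreducible and the union is finite, $(z+A)\cap\overline{R}$ is a proper closed subset. Let $U_1\subset A$ be the open complement of its translate by $-z$. There is a small point to check here: for lower-degree translates we need only the $k$-points of $W_X^e$ and $W_X^{d-e}$, which is exactly what Faltings controls. Because of this, any decomposition produced is a genuine instance of \zcref{def: decomposable intro}.

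\emph{Step 2 (uniqueness of representatives).} Upper semicontinuity of $h^0$ on the family of line bundles parametrized by $z+A$ will be applied to a Poincaré bundle. Take one over $X\times(z+A)$, or, if no Poincaré bundle exists over $k$, work over $\overline{k}$ and note that the resulting locus is Galois-stable and hence defined over $k$. Since $h^0(X,z)=1$ and $h^0\ge 1$ everywhere on $z+A\subset W_X^d$, the locus where $h^0=1$ is open and contains $z$, so it is nonempty. Call its translate $U_2\subset A$. Set $U=U_1\cap U_2$, which is open, and nonempty if $A$ is irreducible, although nonemptiness is not even required.

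\emph{Step 3.} Take $a\in U(k)$ and put $w=z+a\in W_X^d(k)$. Then $h^0(X,w)=1$, so by \cite{VV26}*{Lemma 3.2.1} the class $w$ has a unique effective representative over $k$. Also $w\notin\overline{R}$, so $w$ is not in the image of any sum map $W_X^e(k)\times W_X^{d-e}(k)$. By \zcref{lem: degree d point if and only if not a sum}, the representative is a degree-$d$ point.

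I expect the main obstacle to be Step 1. The point there is to make sure that passing to closures of $k$-points still yields translates contained in $W_X^e$ and $W_X^{d-e}$, so that indecomposability applies. This is exactly what Faltings's theorem supplies through the condition $w_i+A_i\subset W$.
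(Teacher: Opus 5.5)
Your proposal is correct and follows the paper's proof essentially step for step. Faltings's theorem covers the sums of lower-degree $k$-points by finitely many translates $u_i+v_j+B_i+G_j$, and indecomposability plus irreducibility make their intersection with $z+A$ a proper closed set. Upper semicontinuity gives the open locus where $h^0=1$, and the lemma characterizing degree-$d$ points as non-sums finishes the argument; your Poincaré-bundle and Galois-descent remark just fills in what the paper leaves implicit in its appeal to upper semicontinuity.
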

\begin{proof}
    For $e < d$, consider the image of the sum map $W_X^e(k) \times W_X^{d-e}(k) \to W_X^d(k)$. By Faltings' Subvarieties of Abelian Varieties Theorem,
    \begin{align*}
        \bigcup_{e < d} W_X^{d-e}(k) + W_X^{e}(k) = \bigcup_{e < d} \bigcup_i \bigcup_j z_i + z_j + B_i^{d-e}(k) + B_j^{e}(k).
    \end{align*}
    As $z + A$ is indecomposable, for all $e$, $i$, and $j$, $(z + A) \cap (z_i + z_j + B_i^{d-e} + B_j^e)$ must be a proper closed subset of $z + A$.
    Hence, $R \cap (z + A)$ is contained a finite union of proper closed subsets of $z + A$, which is a closed set $Z$. By uppersemicontinuity, there exists an open set $V$ such that for all $v \in z + V$, $h^0(X,v) = 1$, and hence the effective representative for any $v \in z + V(k)$ must be defined over $k$. Let $U \coloneq Z^c \cap V$. Then, every point of $z + U(k)$ is uniquely represented by a degree-$d$ point by \zcref{lem: degree d point if and only if not a sum}.
\end{proof}

\begin{lem}\label{lem: topological statement}
    Let $\mathcal{X}$ be an irreducible topological space, $U \subset \mathcal{X}$ an open set, and $S \subset \mathcal{X}$ a dense subset. Then, $U \cap S$ is dense in $\mathcal{X}$.
\end{lem}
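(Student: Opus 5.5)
Throughout, I will read ``open set'' as ``non-empty open set''. The statement is false for $U = \emptyset$, and in the intended application $U$ is the non-empty open set produced by \zcref{lem: indecomposable implies open degree-d points}. If $\mathcal{X}$ is empty there is nothing to prove, so I assume $\mathcal{X} \neq \emptyset$.

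The proof reduces to the definition of density: $U \cap S$ is dense in $\mathcal{X}$ exactly when it meets every non-empty open subset $V \subset \mathcal{X}$. So I fix an arbitrary non-empty open $V$ and aim to show that $V \cap U \cap S \neq \emptyset$.

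The key input is irreducibility. In an irreducible space, any two non-empty open sets intersect. Indeed, if $V \cap U = \emptyset$, then $\mathcal{X} = (\mathcal{X} \smallsetminus V) \cup (\mathcal{X} \smallsetminus U)$ writes $\mathcal{X}$ as a union of two proper closed subsets, which irreducibility forbids. Hence $V \cap U$ is a non-empty open set. Since $S$ is dense, it meets every non-empty open set, and in particular $S \cap (V \cap U) \neq \emptyset$. This is exactly what was needed, so $U \cap S$ is dense.

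There is no real obstacle here. The only point needing care is the non-emptiness hypothesis on $U$. In the application to \zcref{prop: zariski dense k points implies zariski dense degree d points}, the set $U = Z^c \cap V$ from \zcref{lem: indecomposable implies open degree-d points} is non-empty: $Z$ is a proper closed subset by indecomposability, $V$ is non-empty by upper semicontinuity, since $h^0(X,z) = 1$ holds at the origin, and $A$ is irreducible. With $S = A(k)$, which is dense by hypothesis, the lemma then gives that $U(k) = U \cap A(k)$ is Zariski dense in $A$.
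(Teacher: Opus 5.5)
Your proof is correct and is essentially the paper's: fix a non-empty open $V$, use irreducibility to get $U \cap V \neq \emptyset$, then use density of $S$ to get $S \cap (U \cap V) \neq \emptyset$. Your added hypothesis that $U$ be non-empty is a legitimate correction the paper leaves implicit. Your check that the set $U$ arising in the application is in fact non-empty is also sound.
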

\begin{proof}
    Let $V \subset \mathcal{X}$ be an open set. As $\mathcal{X}$ is irreducible, every open set is dense, and so $U \cap V$ is non-empty. As $U \cap V$ is open and $S$ is dense, $(S \cap U) \cap V = S \cap (U \cap V)$ is non-empty.
\end{proof}

\begin{proof}[Proof of \zcref{prop: zariski dense k points implies zariski dense degree d points}]
    Let $U$ be the open set of \zcref{lem: indecomposable implies open degree-d points}. As $A(k)$ is dense in $A$, by \zcref{lem: topological statement}, $U(k)$ is dense as well.
\end{proof}

We state the following corollary of \zcref{prop: zariski dense k points implies zariski dense degree d points} for later use.

\begin{corollary}\label{cor: indecomp plus dense implies AV param}
    If $D$ is an effective divisor on $X/k$, $[D] + A \subset W_X^d$ is indecomposable, $A$ has positive dimension, and $A(k)$ is dense in $A$, then $d \in \delta_{AV}(X/k)$. 
\end{corollary}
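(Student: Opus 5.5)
The plan is to reduce \zcref{cor: indecomp plus dense implies AV param} to \zcref{prop: zariski dense k points implies zariski dense degree d points}. The only hypothesis of that proposition not given here is $h^0(X,z)=1$, so the first step is to find a point $z$ of the translate $[D]+A$ with this property and check that $z+A$ is the same indecomposable translate.

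To produce $z$, I would use upper semicontinuity of $h^0$ on $[D]+A\subset W_X^d$. The function $a\mapsto h^0(X,[D]+a)$ is upper semicontinuous, so its minimum value $m\ge 1$ is attained on a nonempty open set $V\subset A$.

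The main obstacle is showing that $m=1$. Suppose instead $m\ge 2$. Fix a $k$-point $p$ of $X$, or more generally a point of $X(\bar k)$, and work geometrically. Every class in $[D]+V$ would then satisfy $h^0(X,[D]+a-p)\ge 1$, so $[D]+V-[p]\subset W_X^{d-1}$, and by closedness $[D]+A\subset [p]+W_X^{d-1}$. Taking the closure over all $p$ gives an inclusion into the image of $W_X^1+W_X^{d-1}$. By irreducibility of $[D]+A$ and finiteness of the Faltings decomposition, this should contradict indecomposability.

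This step needs care on two counts. First, decomposability is defined using abelian translates $u+B\subset W_X^e$, and $W_X^{d-1}$ itself is not a translate. Second, the argument should run over $k$-points, so that it matches the discussion before \zcref{def: decomposable intro}, where the decomposition came from $\overline{W_X^e(k)}$. Here I would use that $A(k)$ is dense. The $k$-points $[D]+a$ with $a\in V(k)$ are dense, and each has an effective representative over $k$ with $h^0\ge 2$. Such a representative is a nontrivial sum of lower-degree effective $k$-divisors, or at least varies in a pencil containing reducible members. Combined with the Faltings description of $\overline{R}$ from the earlier discussion, $[D]+A\subset\overline{R}$, which contradicts indecomposability as in that dichotomy.

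If that route proves delicate, I would fall back on the alternative the paper suggests. Take $h^0=1$ as implicit in the effective-$D$ hypothesis, which gives a unique representative via \cite{VV26}*{Lemma 3.2.1}, and set $z=[D]+a_0$ with $a_0\in V(k)$, which is nonempty by \zcref{lem: topological statement}. Then $z+A=[D]+A$ is indecomposable, $h^0(X,z)=1$, and $A(k)$ is dense. \zcref{prop: zariski dense k points implies zariski dense degree d points} gives $U$ with $z+U(k)$ infinite, since $\dim A>0$, and consisting of degree-$d$ points $x$. Each such $x$ satisfies $[x]+A=z+A\subset W_X^d$ with $A$ of positive rank, so $x$ is $AV$-parametrized and $d\in\delta_{AV}(X/k)$.
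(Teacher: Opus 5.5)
Your handling of the case where the minimum $m$ of $h^0$ on $[D]+A$ equals $1$ is fine. Moving the base point to $z=[D]+a_0$ with $a_0\in V(k)$ even removes the need to know $h^0(X,D)=1$ for $D$ itself.

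The gap is the case $m\ge 2$, i.e.\ $[D]+A\subset W_X^{d,1}$, which you try to rule out by contradicting indecomposability. Your first route does close when $X(k)\neq\emptyset$. For $p\in X(k)$, the set $[D]-[p]+A\subset W_X^{d-1}$ is itself an abelian translate over $k$, so $[D]+A=\{[p]\}+([D]-[p]+A)$ is decomposable. Your worry that $W_X^{d-1}$ is not a translate is therefore beside the point. With only $p\in X(\overline{k})$, however, you get geometric decomposability, which contradicts nothing.

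Your proposed repair is false. A $k$-rational class with $h^0\ge 2$ need not be a sum of lower-degree effective $k$-divisors, and its pencils need not contain any reducible $k$-rational member. By Hilbert irreducibility, most $k$-fibres of a degree-$d$ map $X\to\mathbb{P}^1$ are degree-$d$ points. If $X$ has no closed points of degree $<d$, then no $k$-rational member is reducible at all. Nothing in the hypotheses excludes an indecomposable translate lying inside $W_X^{d,1}$. Finally, $h^0(X,D)=1$ is not implicit in $D$ being effective, so your fallback simply assumes the missing case away.

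The missing idea, which is how the paper argues, is that when $h^0\ge 2$ you should not seek a contradiction. Instead, produce the degree-$d$ points from the linear system itself. By Hilbert irreducibility, a base-point-free pencil over $k$ in the class has members that are degree-$d$ points $x$. Each such $x$ satisfies $[x]+A=[D]+A\subset W_X^d$ with $A$ of positive rank, so $x$ is AV-parametrized. Only in the case $h^0=1$ does one invoke the Proposition on indecomposable translates with dense $k$-points.

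The paper applies Hilbert irreducibility to $|D|$ directly. To be careful about base points, use a class $[D]+a$ with $a\in V(k)$ whose linear system is base-point free. If no such class existed, every one of them would split over $k$ as base locus plus moving part. That would put a dense subset of $[D]+A$ into $R$, contradicting indecomposability.
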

\begin{proof}
    If $h^0(X, D) \geq 2$, then by Hilbert's Irreducibility, there exists a degree-$d$ divisor linearly equivalent to $D$, and then $d \in \delta_{AV}(X/k)$. If $h^0(X,D) =1$, apply \zcref{prop: zariski dense k points implies zariski dense degree d points}.
\end{proof}



\subsection{Proof of the main theorem}

In this subsection we prove our main theorem relating degree-$d$ points and indecomposable translates, \zcref{thm: decomposable and closed implies P1}. Let $W_X^{d,r} \coloneqq \{ \mathcal{L} \in \Pic_X^d \mid h^0(X,\mathcal{L}) \geq r + 1\}$.

\begin{lem}\label{lem: key lemma about moving sum}
    Let $w_1 + H_1, \ldots, w_n + H_n$ be effective, geometrically indecomposable, positive dimensional abelian translates defined over $\overline{k}$, and let $f = \deg(\sum_{i=1}^n w_i)$. If the sum map $\prod_{i=1}^n H_i \to \sum_{i=1}^n H_i$ has finite kernel, then $\sum_{i=1}^n w_i + H_i \subset W_X^{f,1}$.
\end{lem}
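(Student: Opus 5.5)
The plan is to argue by contradiction using upper semicontinuity and a monodromy/splitting argument on unique effective representatives. Let $S \coloneqq \sum_{i} w_i + H_i \subset W_X^f$. This is an abelian translate of $\sum_i H_i$, and since the sum map $\prod_i H_i \to \sum_i H_i$ has finite kernel, $\dim S = \sum_i \dim H_i$. The locus $S \cap W_X^{f,1}$ is closed, because $h^0$ is upper semicontinuous. So it suffices to show that a \emph{general} $L \in S$ has $h^0(X,L) \geq 2$. Suppose instead that a dense open $V \subset S$ has $h^0 = 1$, so each $L \in V$ has a unique effective divisor $D_L$.

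First I set up the lift. For $(L_1,\ldots,L_n)$ in a dense open subset of $\prod_i (w_i + H_i)$ with $\sum L_i \in V$, each $L_i$ is effective, since the translates are effective. For any choice of effective $D_i \in |L_i|$, uniqueness forces $D_{\sum L_i} = \sum_i D_i$. Consequently $h^0(L_i) = 1$ for every $i$: a second section of some $L_i$ would give a second effective representative of $\sum L_i$. Hence $D_{L_i}$ is a well-defined sub-divisor of $D_L$. The finite-kernel hypothesis says that $L$ has only finitely many preimages $(L_1,\ldots,L_n)$. So over $V$ we get a finite-to-one correspondence between $L$ and the decompositions $D_L = \sum_i D_{L_i}$ with $[D_{L_i}] \in w_i + H_i$.

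The core step is to use $n \geq 2$ together with indecomposability to produce a second effective representative. Fix general $L_2,\ldots,L_n$, write $E \coloneqq \sum_{i\ge 2} D_{L_i}$, and let $L_1$ vary over $w_1 + H_1$. Then $L_1 + [E]$ traces a translate of $H_1$ inside $S$, and every member of it contains the fixed divisor $E$. Now choose a general point $p$ in the support of $E$ and consider $(L - p) \cap (w_1 + H_1 + \ldots)$. Geometric indecomposability of $w_1 + H_1$ means that $D_{L_1}$ has no fixed base point as $L_1$ varies: otherwise $w_1 + H_1 \subset p + (W_X^{\deg w_1 - 1}\text{-translate})$, which is a decomposition with $B = 0$, $e = 1$. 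I then want to show that $p$ can be ``traded'' between the $D_{L_1}$ part and the $E$ part. That is, I will show that the incidence variety $\{(L, q) : q \le D_L\}$ forces two distinct decompositions of some $L$ into pieces lying in $w_1 + H_1$ and $\sum_{i\ge2} w_i + H_i$. By uniqueness of $D_L$ these pieces lie over the same $L$, and a dimension count comparing $\dim S = \sum \dim H_i$ with the dimension of the family of pairs then contradicts the finiteness of the kernel.

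I expect this trading step to be the main obstacle. I would first try a monodromy argument over $V$: the points of $D_L$ form a finite cover of $V$, and the splitting of $D_L$ into the $D_{L_i}$ is preserved by monodromy. If the monodromy mixes points of $D_{L_1}$ with points of $D_{L_2}$, uniqueness of $D_L$ yields a contradiction with the finite kernel. If it never mixes them, the supports separate into components, and I would try to show that this forces $w_1 + H_1$ into a sum of lower-degree translates, contradicting indecomposability. I am not certain this closes in general. I also note that for $n = 1$ the hypotheses seem too weak, since a bielliptic $\phi^*\Pic_E^1$ typically has $h^0 = 1$. So the argument must genuinely use the interaction between at least two indecomposable summands, and I would check carefully whether $n \ge 2$ is implicitly needed.
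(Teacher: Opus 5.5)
Your reduction is fine: $W_X^{f,1}$ is closed, so it suffices to treat a general $L\in S$, and if $h^0(L)=1$ then $h^0(L_i)=1$ and $D_L=\sum_i D_{L_i}$. The gap is at the core. The ``trading''/monodromy step is never carried out, and as designed it cannot work, because it never uses a \emph{non-trivial} element of the kernel.

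The lemma really requires the kernel of $\prod_i H_i\to\sum_i H_i$ to be finite \emph{and non-trivial}. The paper's proof begins by fixing a non-trivial $a$ in the kernel, and \zcref{prop: sum map not injective implies moving} only applies the lemma in that case. Your $n=1$ objection is correct, but requiring $n\ge 2$ is not the right repair. For the curve $C$ of \zcref{sec: no a semigroup}, the translate $\phi_C^*\Pic_E^1+\theta_C^*\Pic_F^1\subset W_C^5$ has $n=2$ and trivial kernel. Indeed, if $\phi_C^*e=\theta_C^*f$, applying $(\phi_C)_*$ and $(\theta_C)_*$ and using $\mathrm{Hom}(E,F)=0$ shows this class is killed by both $2$ and $3$. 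Yet this translate is not contained in $W_C^{5,1}=W_C^1+\phi_C^*\Pic_E^2$: its base-point map to $C$ would be constant, putting a surface inside a translate of a curve. So an argument whose final contradiction is with ``finiteness of the kernel'' is aimed at the wrong target. Likewise, the ``never mixes'' branch of your monodromy dichotomy is just the expected situation and yields no decomposition of $w_1+H_1$.

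The missing idea is as follows. Take $0\neq a=(a_1,\dots,a_n)$ in the kernel, so $\sum_i a_i=0$. For general $(L_i)$ the effective divisors $y=\sum_i D_{L_i}$ and $y^a=\sum_i D_{L_i+a_i}$ are linearly equivalent, and everything reduces to showing $y\neq y^a$ as divisors. For that you need the general $D_{L_i}$ to be \emph{irreducible}, not merely free of a fixed base point.

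The paper gets this arithmetically, in four steps.
\begin{enumerate}
\item Work over a number field $\ell$ with each $H_i(\ell)$ dense. By \zcref{lem: indecomposable implies open degree-d points} (indecomposability plus Faltings), general $\ell$-points of $w_i+H_i$ are represented by closed points $x_i$ of $X_\ell$.
\item Remove from $w_i+H_i$ the sets $(w_i+H_i)\cap(w_j+H_j)$ for $j\neq i$; these are proper because the kernel is finite. Also remove their finitely many translates by $i$-th components of kernel elements, so the remaining open set is stable under $+a_i$.
\item Pick $i$ with $a_i\neq 0$. The closed point $x_i$ differs from $x_i^a$, and from every $x_j^a$ with $j\neq i$ since $[x_i]\notin w_j+H_j$. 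Hence $x_i$ is not a component of $y^a$, so $y\neq y^a$.
\item This gives $h^0\ge 2$ on a dense subset of $S$, and closedness of $W_X^{f,1}$ finishes.
\end{enumerate}

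If you want to keep your geometric viewpoint, the same argument runs over the function field of $\prod_i(w_i+H_i)$. Suppose the generic $D_{L_i}$ split over the function field of $w_i+H_i$. This would give rational maps from $w_i+H_i$ to $\Pic_X^e$ and $\Pic_X^{\deg w_i-e}$, which are translates of homomorphisms. That yields a decomposition of $w_i+H_i$ over $\overline{k}$, contradicting geometric indecomposability. Irreducibility then persists over the function field of the product.
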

\begin{proof}
    Let $\ell/k$ be a finite extension such that for all $1 \leq i \leq n$, $w_i + H_i$ is defined over $\ell$ and $H_i(\ell)$ is Zariski dense in $H_i$, and further every element of the kernel of $\theta$ is defined over $\ell$. Let $1 \leq i \leq n$. Let $\pi_i \colon \prod_{j=1}^n H_j \to H_i$ denote the $i$-th projection.
    By \zcref{lem: indecomposable implies open degree-d points} there exists a non-empty open set $W_i \subset w_i + H_i$ such that every element of $W_i(\ell)$ is uniquely represented by a closed point over $\ell$. The intersection $w_i + H_i \cap w_j + H_j$ is closed, and if $w_i + H_i \subset w_j + H_j$, then an abelian variety isogenous to $H_i$ is contained in the kernel of $\theta$, a contradiction as the kernel of $\theta$ is not positive dimensional. So, $w_i + H_i \cap w_j + H_j$ is a proper closed subset of $w_i + H_i$. As $\ker \theta$ is finite, $Z_i \coloneq \cup_{a \in \ker \theta} \pi_i(a) + (W_i^c \cup (\cup_{j = 1}^n (w_i + H_i \cap w_j + H_j)))$ is also a proper closed subset of $w_i + H_i$. Let $U_i$ be the complement of $Z_i$ in $w_i + H_i$. As $Z_i$ is closed under translation by $\pi_i(a)$ for any $a \in \ker \theta$, so is $U_i$.
    
    As $U_i \subset W_i$, any element of $U_i(\ell)$ has a unique effective representative, integral over $\ell$. Let $[x_i] \in U_i(\ell)$. Fix some non-trivial $a \in \ker \theta$. For any $[x_i] \in U_i(\ell)$, let $x_i^a$ be the effective representative for $[x_i] + \pi_i(a) \in U_i(\ell)$. For any $([x_i]) \in \prod_{i=1}^n U_i(\ell)$, consider the two sums of divisors $y \coloneq \sum_{i=1}^n x_i$ and $y^a \coloneq \sum_{i=1}^n x_i^a$. As $\sum_{i=1}^n \pi_i(a) = \theta(a) = 0$, $[y]$ and $[y^a]$ are equal. We show that $y$ and $y^a$ are not equal \textit{as divisors}, so $h^0(X, [y]) \geq 2$.
    As $a$ is not the identity, there exists some $i$ such that $\pi_i(a)$ is not the identity, and hence $x_i$ and $x_i^a$ are not equal. As $[x_i] \in U_i$, for any $j$, $[x_i] \not \in (w_i + H_i) \cap (w_j + H_j)$, and hence $x_i$ does not equal $x_j^a$ for any $j$. So, $y^a$ is not supported on $x_i$, and $y \neq y^a$. Thus, if $y \in \theta(\prod_{i=1}^n U_i(\ell))$, then $h^0(X,y) \geq 2$. By \zcref{lem: topological statement}, $U_i(\ell)$ is dense in $w_i + H_i$, and hence $\prod_{i=1}^n U_i(\ell)$ is dense in $\prod_{i=1}^n w_i + H_i$. As $\theta$ is continuous, $\theta(\prod_{i=1}^n U_i(\ell))$ is dense in $\sum_{i=1}^n w_i + H_i$, showing that $\sum_{i=1}^n w_i + H_i \subset W_X^{f,1}$.
\end{proof}

\begin{proposition}\label{prop: sum map not injective implies moving}
    Let $u + B \subset W_X^e$ and $v + G \subset W_X^{d-e}$ be abelian translates. If the sum map $B \times G \to B + G$ is not injective, then $u + v + B + G \subset W_X^{d,1}$.
\end{proposition}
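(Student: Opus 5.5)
Set $\theta \colon B \times G \to B + G$ for the sum map. Its kernel $K$ is a closed subgroup scheme, nontrivial by hypothesis; over $\overline{k}$ it contains a nonzero point $a = (b,-b)$ with $b \in B \cap G$, $b \neq 0$. The idea is the same as in the proof of \zcref{lem: key lemma about moving sum}: a general point of $u+v+B+G$ has two different effective representatives, namely $D_1 + D_2$ and $D_1' + D_2'$, where $[D_1'] = [D_1] + b$ and $[D_2'] = [D_2] - b$. Unlike that lemma, however, we have no indecomposability and no finiteness of $K$. So the argument will use $h^0$ and the support of divisors directly, not the uniqueness-of-point statements from \zcref{lem: indecomposable implies open degree-d points}.

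We may work over $\overline{k}$, since $W_X^{d,1}$ is closed and the containment can be checked geometrically. Because $W_X^{d,1}$ is closed and $\theta$ is surjective onto $B+G$, it suffices to show that for $(\beta,\gamma)$ in a dense open subset of $B \times G$ we have $h^0(X, u+v+\beta+\gamma) \geq 2$.

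If $h^0 \geq 2$ holds generically on $u+B$ or on $v+G$, we are done immediately: tensoring with a nonzero section of the other factor gives an injection of global sections, so $h^0$ of the sum is at least $2$. Otherwise, by upper semicontinuity there are dense opens $U_1 \subset u+B$ and $U_2 \subset v+G$ on which $h^0 = 1$. Each point of these opens therefore has a unique effective divisor representing it. Consider the map $\psi\colon u+B \to \Sym^e X$ on $U_1$ sending a class to its divisor, and similarly on $U_2$.

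For $\beta' \in U_1$ with $\beta'+b \in U_1$, and $\gamma' \in U_2$ with $\gamma'-b \in U_2$, put
\[D_1 = \psi(\beta'),\quad D_1^b = \psi(\beta'+b),\quad D_2 = \psi(\gamma'),\quad D_2^{b} = \psi(\gamma'-b).\]
Then $D_1 + D_2 \sim D_1^b + D_2^b$. If these two effective divisors are distinct, $h^0 \geq 2$ at that point. The conditions on $\beta'$ and $\gamma'$ define dense opens, since they come from intersecting $U_i$ with a translate of itself.

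The main obstacle is to rule out $D_1 + D_2 = D_1^b + D_2^b$ as divisors on a dense open set. We have $D_1 \neq D_1^b$, since their classes differ by $b \neq 0$. So equality forces $D_1 \leq D_1^b + D_2^b$ with a nontrivial contribution from $D_2^b$: some point $p$ in the support of $D_1$ must lie in the support of $D_2^b$, where $D_2^b$ depends only on $\gamma'$.

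First I would try to exploit the independence of $\beta'$ and $\gamma'$. Fix a general $\gamma'$; then $\operatorname{Supp} D_2^b$ is a fixed finite set $S$. Since $B$ is positive dimensional and $\psi$ is injective on $U_1$, the divisors $D_1$ cannot all contain a point of $S$. Indeed, if they did, some $p \in S$ would be contained in every $D_1$ for $\beta'$ in an irreducible dense set. Then $u + B - [p] \subset W_X^{e-1}$ on a dense set, and hence everywhere by closedness.

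Iterating this removes base points, so we reduce to the case where the effective divisors of $U_1$ have no common point. To handle the reduction cleanly, I would replace $u$ by $u - [F]$, where $F$ is the fixed part common to all $\psi(\beta')$. The sums then change by the fixed divisor $F$, which does not affect the $h^0 \geq 2$ conclusion, and similarly for $G$. After this reduction, for fixed general $\gamma'$, the set of $\beta'$ with $\operatorname{Supp}\psi(\beta') \cap S \neq \emptyset$ is a proper closed subset. The pairs where equality holds then lie in a proper closed subset of $B \times G$, so $h^0 \geq 2$ on a dense open, as required.
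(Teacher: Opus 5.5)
Your argument is correct, and it takes a genuinely different and more economical route than the paper.

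The paper splits into two cases.
\begin{enumerate}
\item If $\ker(B\times G\to B+G)$ is positive dimensional, it takes strict transforms $\tilde B,\tilde G$ in $\Sym^e_X,\Sym^{d-e}_X$. Since $\sigma(\tilde B\times\tilde G)$ has dimension $\dim B+\dim G>\dim(B+G)$, the Abel--Jacobi map has positive-dimensional fibres over $u+v+B+G$.
\item If the kernel is finite, it refines $u+B$ and $v+G$ into sums of geometrically indecomposable translates and applies \zcref{lem: key lemma about moving sum}. That proof passes to a field $\ell$ over which the $H_i$ have dense rational points. It then uses \zcref{lem: indecomposable implies open degree-d points}, hence Faltings, to know that general members of each summand are single closed points, which is what makes $y\neq y^a$ visible.
\end{enumerate}

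You instead fix one nonzero $b\in B\cap G$. After removing the fixed part of the $B$-family, you show directly that $\psi(\beta')+\psi(\gamma')\neq\psi(\beta'+b)+\psi(\gamma'-b)$ for general $(\beta',\gamma')$. This treats finite and positive-dimensional kernels uniformly and needs no decomposition into indecomposable pieces. It also stays entirely within the geometry over $\overline{k}$, which is all the statement concerns, and so avoids the arithmetic detour. What the paper's route buys in return is the more general \zcref{lem: key lemma about moving sum} for arbitrary finite sums of indecomposable translates, which is not needed here.

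Two presentational fixes:
\begin{enumerate}
\item The sentence ``the divisors $D_1$ cannot all contain a point of $S$'' is false before base points are removed, since a family may have a fixed point. Subtract the fixed part $F$ first, independently of $\gamma'$, and only then argue; this is what you in effect do.
\item You can avoid any fuss about closed subsets of the product. Fix a general $\gamma'$ and conclude $\gamma'+u+B\subset W_X^{d,1}$ by closedness. Then note that the set of $\gamma'$ with this property is closed and contains a dense subset of $v+G$.
\end{enumerate}
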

\begin{proof}
    We first consider the case where the kernel of the map $B \times G \to B + G$ has positive dimensional kernel, so that $\dim B + G < \dim B \times G$. Let $\epsilon_f \colon \Sym_X^f \to W_X^f$ be the natural map. If $u+ B \subset W_X^{e,1}$ or $v + G \subset W_X^{d-e, 1}$, then $u + v + B + G \subset W_X^{d,1}$, so we can assume $\epsilon_e$ and $\epsilon_{d-e}$ are birational over $u + B$ and $v + G$. Let $\Tilde{B}$ and $\Tilde{G}$ be the strict transforms of $u + B$ and $v + G$ under $\epsilon_e$ and $\epsilon_{d-e}$. Recall that the map $\sigma \colon \Sym_X^{e} \times \Sym_X^{d-e} \to \Sym_X^d$ has finite fibers, and let $J = \sigma(\Tilde{B} \times \Tilde{G})$. As $\dim J = \dim (\Tilde{B} \times \Tilde{G}) \geq \dim (B \times G) > \dim (B + G)$, we conclude that the map $\epsilon_d|_{J} \colon J \to u + v + B + G$ has positive dimensional fibers, which implies that $u + v + B + G \subset W_X^{d,1}$.

    We now consider the case where the kernel is finite.
    Let $w_1 + H_1, \dots, w_m + H_m$ be geometrically indecomposable abelian translates, ordered such that for $1 \leq i \leq n$, $H_i$ has positive dimension, while for $n+1 \leq i \leq m$, $H_i$ has dimension zero, such that there exists some $I \subset \{1, \ldots, m\}$ such that $u  + B \subset \sum_{i \in I} w_i + H_i$ and $v  + G \subset \sum_{i \in I^c} w_i + H_i$. The sum map $\prod_{i=1}^m H_i \to \sum_{i = 1}^m H_i$ factors through the partial sum $\prod_{i=1}^m H_i \to \sum_{i\in I} H_i \times \sum_{i \in I^c} H_i \to \sum_{i=1}^m H_i$. As $B \times G \subset \sum_{i \in I} H_i \times \sum_{i \in I^c} H_i$ and $B \times G \to B+G$ has non-trivial kernel, so does $\sum_{i \in I} H_i \times \sum_{i \in I^c} H_i \to \sum_{i=1}^m H_i$, and hence so does $\theta \colon \prod_{i=1}^m H_i \to \sum_{i=1}^m H_i$. If $\ker \theta$ has positive dimension, then we proceed as in the positive dimensional case, so we can assume that $\theta$ has finite kernel. Zero dimensional abelian varieties are points, and hence $\prod_{i=1}^m H_i \simeq \prod_{i=1}^n H_i$ and the map $\prod_{i=1}^n H_i \to \sum_{i=1}^n H_i$ has non-trivial finite kernel. Let $f = \deg \left(\sum_{i=1}^n w_i \right)$. By \zcref{lem: key lemma about moving sum}, $\sum_{i=1}^n w_i +H_i \subset W_X^{f,1}$. Translation defines an injective map on global sections, so $u + v + B + G \subset \sum_{i=1}^m w_i + H_i \subset W_X^{d,1}$.
\end{proof}

\begin{proof}[Proof of \zcref{thm: decomposable and closed implies P1}]
    Suppose for contradiction that $[x] +A $ is decomposable. Let $u + B \subset W_X^e$ and $v + G \subset W_X^{d-e}$ be abelian translates such that $[x] +A \subset u + v + B + G$. If the sum map $B \times G \to B + G$ is not injective, by \zcref{prop: sum map not injective implies moving}, $h^0(X,x) \geq 2$, a contradiction as $x$ is $\mathbb{P}^1$-isolated. Thus we can suppose the sum map $B \times G \to B+G$ is injective. Then, there exist $z \in u + B(k)$ and $y \in v+G(k)$ such that $z + y = [x]$, contradicting \zcref{lem: degree d point if and only if not a sum}.
\end{proof}

As a corollary of \zcref{thm: decomposable and closed implies P1}, we give an improvement to \cite{VV26}*{Proposition 4.3.5} (see also \cite{BELOV2019}*{Theorem 4.2}).
\begin{corollary}\label{cor: degree d dense open}
    Let $x \in X$ be a degree-$d$ point and suppose that there exists a positive rank abelian variety $A$ such that $[x] +A \subset W_X^d$. If $x$ is $\mathbb{P}^1$-isolated, then there exists an open set $U \subset A$ such that $U(k)$ is infinite and every element of $[x] + U(k)$ is uniquely represented by a degree-$d$ point over $k$.
\end{corollary}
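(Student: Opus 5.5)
The plan is to combine \zcref{thm: decomposable and closed implies P1} with \zcref{lem: indecomposable implies open degree-d points}, and then use density of rational points to get infinitely many of them in the open set. First, since $x$ is $\mathbb{P}^1$-isolated, \zcref{thm: decomposable and closed implies P1} shows that $[x] + A$ is indecomposable. Second, being $\mathbb{P}^1$-isolated means in particular that $h^0(X,[x]) = 1$. Indeed, if $h^0(X,[x]) \geq 2$, a pencil in $|x|$ containing $x$ would exhibit $x$ as a fiber of a map $X \to \mathbb{P}^1$. So the hypotheses of \zcref{lem: indecomposable implies open degree-d points} hold with $z = [x]$. That lemma produces a nonempty Zariski open $U \subset A$ such that every element of $[x] + U(k)$ has a unique effective representative over $k$, and that representative is a degree-$d$ point.

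It remains to show $U(k)$ is infinite. Here I would not assume $A(k)$ is dense in $A$, only that $A$ has positive rank. Let $A' \subset A$ be the identity component of the Zariski closure of $A(k)$. It is an abelian subvariety, it is positive dimensional because $A(k)$ is infinite, and $A'(k)$ is dense in $A'$.

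I would then like to run the same argument with $A'$ in place of $A$, since $[x] + A' \subset [x] + A \subset W_X^d$. This needs $[x] + A'$ to be indecomposable, which follows from \zcref{thm: decomposable and closed implies P1} applied to $A'$: the theorem holds for any abelian variety with $[x] + A' \subset W_X^d$. Applying \zcref{lem: indecomposable implies open degree-d points} to $[x] + A'$ gives an open $U' \subset A'$. By \zcref{lem: topological statement}, $U'(k)$ is dense in the irreducible, positive dimensional $A'$, hence infinite.

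The one delicate point is producing an open set of $A$ itself, as the statement asks, rather than of $A'$. I would handle it this way. The closed set $Z$ of bad points, and the open set $V$ where $h^0 = 1$, built in the proof of \zcref{lem: indecomposable implies open degree-d points} for $A$ restrict to $A'$ compatibly. So $U \cap A'$ is an open subset of $A'$ of the same kind as $U'$, and it is nonempty because $[x] + A'$ is indecomposable: its bad locus $Z \cap A'$ is a proper closed subset, and $V \cap A'$ contains the identity since $h^0(X,[x]) = 1$. Then \zcref{lem: topological statement} applied to $A'$ and $A'(k)$ gives that $(U \cap A')(k) \subset U(k)$ is dense in $A'$, hence infinite. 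This verification, checking that the open set of $A$ meets $A'$ and that the indecomposability of the smaller translate is what guarantees this, is the step I expect to require the most care.
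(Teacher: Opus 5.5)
Your argument is correct, but it is organized differently from the paper's proof.

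\textbf{The paper's route.} The paper never builds the open set on $A$ directly. It passes to the maximal abelian subvariety $B \le A$ with $B(k)$ dense, which is your $A'$. It applies the main theorem to $[x]+B$, then uses the proposition for dense $A(k)$ as a black box to get an open $V \subset B$. It then manufactures an open set of $A$: writing the closure of $[x]+A(k)$ as $\bigcup_i z_i + B$, it deletes $(B \setminus V) \cup \bigcup_{i>1}(z_i - [x] + B)$ from $A$. This forces $[x]+U(k) = [x]+V(k)$ exactly.

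\textbf{Your route.} You take $U$ to be the intrinsic set produced by the lemma on $[x]+A$ itself, namely the $h^0=1$ locus minus $\overline{R}$. You use $A'$ only to show that $U(k)$ is infinite. The key observation is that $\overline{R}\cap([x]+A')$ is exactly the bad locus for $[x]+A'$. That locus is proper by indecomposability of $[x]+A'$, and the $h^0=1$ locus contains $0$, so $U\cap A'$ is a nonempty open subset of $A'$.

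\textbf{Trade-offs.} Your version yields a canonical $U$ and needs only the identity component of $\overline{A(k)}$, not its full decomposition into cosets. The cost is that you must open up the proof of the lemma rather than cite the proposition. Specifically, you must take $V$ to be the entire $h^0=1$ locus so that it contains $0$, and identify the lemma's $Z$ with $\overline{R}\cap([x]+A)$.

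In your setup, indecomposability of $[x]+A$ itself is not actually needed. The defining property of $U$ holds regardless, and nonemptiness of $U$ already follows from that of $U\cap A'$.
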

\begin{proof}
    Let $B\leq A$ be the maximal abelian subvariety such that $B(k)$ is dense in $B$. Then, the Zariski closure of $[x] + A(k)$ equals a finite union $\cup_{i} z_i + B$. Re-indexing if necessary, we can suppose that $[x] + B = z_1 + B$. By \zcref{thm: decomposable and closed implies P1}, $[x] + B$ is indecomposable. By \zcref{prop: zariski dense k points implies zariski dense degree d points}, there exists an open set $V \subset B$ such that $[x] + V(k)$ has the desired properties. Let $Z \subset B$ be the complement of $V$ in $B$. As $B \subset A$ is closed, $Z \subset A$ is closed. Let $[x] + U$ be the complement of $[x] + Z \cup \bigcup_{i>1} z_i + B$ in $[x] + A$. Then, $U \subset A$ is open and $[x] + U(k) = [x] + V(k)$. As $[x] + V(k)$ has the desired properties, $[x] + U(k)$ does as well.
\end{proof}

\subsection{Indecomposable but geometrically decomposable translates}

We collect several results on indecomposable but geometrically decomposable translates. For an example of such an abelian translate, see \cite{VV26}*{Example 5.5.3}.
Abelian translates which are indecomposable but geometrically decomposable exhibit interesting arithmetic.

\begin{corollary}\label{cor: common subfield}
    Let $x \in X$ be a degree-$d$, $\mathbb{P}^1$-isolated point. If $A$ is positive rank and $[x] + A \subset W_X^d$ is geometrically decomposable, then there exists a non-trivial finite extension $\ell/k$ and infinitely many degree-$d$ points $x_i \in X$ such that $[x_i] \in [x] + A(k)$ and $\mathbf{k}(x_i)$ contains a proper subfield isomorphic to $\ell$.
\end{corollary}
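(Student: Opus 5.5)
The plan is to combine \zcref{cor: degree d dense open}, which supplies infinitely many degree-$d$ points, with the geometric decomposition of $[x]+A$. The key is a Galois-equivariant map from geometric points to decomposition pieces. First I reduce to a decomposition with good uniqueness properties. Over $\overline{k}$, repeatedly decompose $[x]+A$ until every piece is geometrically indecomposable. This gives translates $w_1+H_1,\dots,w_m+H_m$ with $w_j+H_j\subset W_X^{d_j}$, $\sum d_j=d$, $m\geq 2$ and $[x]+A\subset \sum_j (w_j+H_j)$. Since $x$ is $\mathbb{P}^1$-isolated, $h^0(X,x)=1$; this is a geometric condition, so it persists over $\overline{k}$. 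Exactly as in the proof of \zcref{thm: decomposable and closed implies P1}, \zcref{prop: sum map not injective implies moving} and \zcref{lem: key lemma about moving sum} then force the sum map $\theta\colon\prod_j H_j\to\sum_j H_j$ to be injective; otherwise $[x]+A\subset W_X^{d,1}$. To know that only finitely many pieces can occur, I would take the $w_j+H_j$ among the finitely many maximal abelian translates in the $W_X^{e}/\overline{k}$ for $e<d$. Finiteness of maximal translates in a subvariety of an abelian variety (Ueno/Kawamata) is the input I would cite here. The whole finite configuration is then defined over some finite extension $k_0/k$.

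Next, by \zcref{cor: degree d dense open}, there are infinitely many degree-$d$ points $x_i$ with $[x_i]\in[x]+U(k)$, each the unique effective divisor in its class. Shrinking $U$ by removing finitely many proper closed subsets, as in the construction of the sets $U_i$ in the proof of \zcref{lem: key lemma about moving sum}, I can arrange two things. First, $[x_i]=\sum_j y_{ij}$ with $y_{ij}\in w_j+H_j$, and these $y_{ij}$ are uniquely determined, by injectivity of $\theta$. Second, each $y_{ij}$ has $h^0=1$ and unique effective representative $E_{ij}$, and the supports of the $E_{ij}$ are pairwise disjoint. Uniqueness of the effective representative of $[x_i]$ then gives $x_i\otimes\overline{k}=\sum_j E_{ij}$. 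Hence every geometric point $P$ of $x_i$ lies in exactly one $E_{ij}$, and I obtain a map $P\mapsto j(P)$.

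For $\sigma\in\mathrm{Gal}(\overline{k}/k)$, the class $[x_i]$ is $k$-rational, and $\sigma$ permutes the finite set of maximal translates. Applying $\sigma$ to the unique decomposition therefore gives $\sigma(E_{ij})=E_{i,\sigma(j)}$, where $\sigma$ also acts on the index set through its action on the translates. So the map $P\mapsto j(P)$ is Galois-equivariant. Consequently, for a geometric point $P$ corresponding to an embedding $\mathbf{k}(x_i)\hookrightarrow\overline{k}$, the image of $\mathbf{k}(x_i)$ contains the field of definition $\ell_{j(P)}$ of the translate $w_{j(P)}+H_{j(P)}$. Three checks remain. Galois acts transitively on the $d$ geometric points and $m\geq 2$, so the stabilizer of $j(P)$ is a proper subgroup, giving $\ell\neq k$. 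Next, $[\ell:k]$ equals the size of the orbit of $j(P)$, while the points mapping to that orbit number at most $d$. If $\ell$ were all of $\mathbf{k}(x_i)$, each fiber would contain one point, which contradicts $H_j$ being positive dimensional in a general position; equivalently, it contradicts $d_j\geq 1$ together with $w_j+H_j$ being a translate by a degree-$d_j$ class with $d_j<d$. I expect to have to argue this properness carefully. Finally, since only finitely many fields $\ell_j$ occur, pigeonhole gives a single $\ell$ that works for infinitely many $x_i$.

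The main obstacle, I expect, is the finiteness and canonicity of the decomposition. I need the pieces to come from a finite, Galois-stable set and the splitting of each $[x_i]$ to be unique, so that Galois equivariance actually holds. I would first try the maximal-translate finiteness argument together with the injectivity of $\theta$ sketched above.
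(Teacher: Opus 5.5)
\textbf{There is a genuine gap.} Your setup is sound up to $x_i\otimes\overline{k}=\sum_j E_{ij}$, with $(y_{ij})$ unique by injectivity of $\theta$. The gap is the claimed Galois-equivariance of $P\mapsto j(P)$, and everything after it depends on that step: the inclusion $\ell_{j(P)}\subset\mathbf{k}(P)$, nontriviality of $\ell$, and your properness argument.

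First, the finiteness input is false. $W_X^e$ can contain infinitely many maximal abelian translates, for example $y+\phi^*\Pic_E^1$ with $y\in W_X^{e-2}$ on a bielliptic curve (see \zcref{sec: bielliptic curves}), or the family $\phi_C^*\Pic_E^1+W_C^3$ in \zcref{thm: f+2 locus no new translates}. Ueno/Kawamata gives finiteness of the abelian subvarieties that occur, not of their translates. Moreover, maximal translates are typically decomposable, so you cannot take your pieces to be both maximal and geometrically indecomposable.

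Second, and more seriously, suppose you had a finite Galois-stable pool of candidate pieces. The only uniqueness you have is uniqueness of $(y_{ij})$ \emph{for a fixed choice of pieces}. Applying $\sigma$ gives $x_i\otimes\overline{k}=\sum_j\sigma(E_{ij})$, which is the decomposition of $[x_i]$ relative to $\{\sigma(w_j+H_j)\}$. That can be a different collection of translates, inducing a different partition of the geometric points of $x_i$, and nothing forces $\sigma(E_{ij})$ to be one of the $E_{ij'}$. You could take the common refinement of the partitions coming from all conjugate decompositions; it is Galois-stable, but its blocks may be singletons, and then no proper subfield comes out.

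\textbf{How to repair it.} You do not need equivariance. The pieces and $\theta$ are all defined over some finite $k_0/k$. Since $\theta$ is injective and $[x_i]$ is $k$-rational, each $y_{ij}$ is $\mathrm{Gal}(\overline{k}/k_0)$-fixed. Because $h^0(X,y_{ij})=1$, each $E_{ij}$ is then defined over $k_0$. So $x_i$ splits over $k_0$ into proper effective pieces and is not a closed point there. The paper gets exactly this by applying \zcref{thm: decomposable and closed implies P1} over a Galois $\tilde{k}\supset k_0$.

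Hence $\mathbf{k}(x_i)$ and $\tilde{k}$ are not linearly disjoint. Because $\tilde{k}/k$ is Galois, this means $\mathbf{k}(x_i)\cap\tilde{k}\neq k$ inside $\overline{k}$. Pigeonholing over the finitely many subfields of $\tilde{k}$ gives one $\ell\neq k$ that works for infinitely many $x_i$.

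Properness should come from Faltings, not from the geometry of the pieces. If $\mathbf{k}(x_i)\cong\ell\subset\tilde{k}$ for infinitely many $i$, then $X(\tilde{k})$ would be infinite. This is the paper's proof.
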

\begin{proof}
    Let $U \subset A$ be the open set of \zcref{cor: degree d dense open}. Let $\Tilde{k}/k$ be a Galois extension such that $[x] + A$ is decomposable over $\Tilde{k}$. By \zcref{thm: decomposable and closed implies P1}, no element of $[x] + U(k)$ is represented by a degree-$d$ point over $\Tilde{k}$. Thus, for all $x_i \in [x] + U(k)$, the fields $\mathbf{k}(x_i)$ and $\Tilde{k}$ are not linearly disjoint, so as $\Tilde{k}/k$ is Galois, $\mathbf{k}(x_i)$ must contain a subfield isomorphic to a subfield of $\Tilde{k}$, see \cite{DF2004}*{Corollary 14.4.20} for instance. As there are finitely many subfields of $\Tilde{k}$, by the pigeonhole principle, there exists some subfield $\ell$ such that for infinitely many $i$, $\mathbf{k}(x_i)$ contains a subfield isomorphic to $\ell$, which must a proper subfield by Faltings's proof of the Mordell Conjecture.
\end{proof}

\begin{remark}
    The proof of \zcref{cor: common subfield} gives some control over the field $\ell$. Let $u + B$ and $v + G$ be abelian translates over $\overline{k}$ such that $[x] + A \subset u+v+B+G$, and let $k_1$ and $k_2$ be the fields of definition of $u+B$ and $v+G$, respectively. Let $\Tilde{k}/k$ be a Galois extension such that $\Tilde{k}$ contains $k_1$ and $k_2$. Then, $\ell$ is some subfield of $\Tilde{k}$. 
\end{remark}

\begin{remark}\label{rem: prime degree geo decomp implies decomp}
    If $d$ is prime and $A$ has positive rank, then \zcref{cor: common subfield} shows that any geometrically decomposable $[x] + A \subset W_X^d$ is decomposable, as prime degree extensions contain no non-trivial subfields.
\end{remark}

\begin{lem}\label{lem: geometric decomposition degree divides}
    If $z + A \subset W_X^d$ is indecomposable but geometrically decomposable, $h^0(X,z) = 1$, and $A(k)$ is dense in $A$, then there exists some $e$ dividing $d$ such that $W_X^e$ and $W_X^{d-e}$ contain abelian translates $u + B$ and $v + G$, defined over $\overline{k}$, such that $z + A_{\overline{k}} \subset u+v+B+G$.
\end{lem}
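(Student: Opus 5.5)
We use Galois descent together with the degree-$d$ points supplied by \zcref{prop: zariski dense k points implies zariski dense degree d points}. Since $z + A$ is indecomposable, $h^0(X,z) = 1$, and $A(k)$ is dense, that proposition gives an open $U \subset A$ with $U(k)$ dense such that every element of $z + U(k)$ is uniquely represented by a degree-$d$ point $x$ over $k$. Fix a finite Galois extension $\Tilde{k}/k$ over which $z + A$ becomes decomposable. Choose abelian translates $u + B \subset W_X^e$ and $v + G \subset W_X^{d-e}$, defined over $\Tilde{k}$, with $z + A_{\overline{k}} \subset u+v+B+G$. Enlarge $\Tilde{k}$ if necessary so that $B(\Tilde{k})$ and $G(\Tilde{k})$ are dense and all translates involved are defined over $\Tilde{k}$.

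Over $\Tilde{k}$, each such $x$ becomes reducible, with $h^0 = 1$ still holding. By \zcref{lem: degree d point if and only if not a sum} applied over $\Tilde{k}$, a $\Tilde{k}$-decomposition of $[x]$ corresponds exactly to a splitting of the divisor $x_{\Tilde{k}}$ into effective $\Tilde{k}$-rational pieces. The $\Tilde{k}$-irreducible components of $x_{\Tilde{k}}$ are the closed points of $X_{\Tilde{k}}$ over $x$. Because $\Tilde{k}/k$ is Galois and $x$ is a closed point of $X$, these components are permuted transitively by $\mathrm{Gal}(\Tilde{k}/k)$, so they all have the same degree $e_0$. This $e_0$ divides $d$, and $e_0 < d$ because $x_{\Tilde{k}}$ is not a single point (it decomposes). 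Hence every effective $\Tilde{k}$-rational subdivisor of $x$ has degree a multiple of $e_0$.

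Next we produce the desired translates with degree dividing $d$. Take $e' = e_0$, or more generally any multiple of $e_0$ properly between $0$ and $d$; it suffices to find one divisor of $d$. Partition the points of $U(k)$ according to the isomorphism type of the splitting, namely the residue field of one component over $\Tilde{k}$ and its degree $e_0$; there are only finitely many possible degrees. By pigeonhole, some $e_0 \mid d$ occurs for a Zariski dense set $S$ of points in $z+U(k)$. For each $x \in S$, write $x_{\Tilde{k}} = y + y'$ with $\deg y = e_0$, where $y$ is one component. Then $[x] \in \operatorname{im}\bigl(W_X^{e_0}(\Tilde{k}) \times W_X^{d-e_0}(\Tilde{k})\bigr)$. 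By Faltings's theorem over $\Tilde{k}$, this image is a finite union of $z_i + z_j + B_i + B_j$ with $z_i + B_i \subset W_X^{e_0}$ and $z_j + B_j \subset W_X^{d-e_0}$. Since $S$ is dense in the irreducible $z+A$, one of these finitely many closed sets contains $z + A$, which gives the statement with $e = e_0 \mid d$.

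The main obstacle is making sure the pigeonhole and Faltings steps interact correctly. The set $S$ must be dense, and that density must be used to land inside a single closed piece of the union. The points are only $k$-rational, but the decomposition lives over $\Tilde{k}$; this is handled by taking closures over $\overline{k}$, where irreducibility of $z+A_{\overline{k}}$ applies. We must also check that $h^0 = 1$ persists after base change, which holds because $h^0$ is invariant under field extension. Finally, the choice of a single component $y$ requires no $\Tilde{k}$-rationality beyond the fact that the closed points of $X_{\Tilde{k}}$ are $\Tilde{k}$-rational divisors, which is automatic.
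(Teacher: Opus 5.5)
There is a genuine gap. Your outline matches the paper's: degree-$d$ points from the proposition, equal-degree Galois orbits giving $e_0\mid d$, pigeonhole, Faltings over $\Tilde{k}$, and irreducibility of $z+A$. The gap is the step ``Over $\Tilde{k}$, each such $x$ becomes reducible'', reused as ``$e_0<d$ because \dots\ it decomposes''. You assert this but do not prove it, and it is the crux.

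Decomposability of $z+A$ over $\Tilde{k}$ is only a containment of varieties $z+A\subset u+v+B+G$. For a $k$-point $[x]$ it gives $[x]=w_1+w_2$ with $w_1\in(u+B)(\overline{k})$ and $w_2\in(v+G)(\overline{k})$. To apply \zcref{lem: degree d point if and only if not a sum} over $\Tilde{k}$ you need $w_1,w_2$ to be $\Tilde{k}$-rational, and nothing in your argument gives this. The fiber of the sum morphism $(u+B)\times(v+G)\to u+v+B+G$ over $[x]$ is a torsor under $K=\ker(B\times G\to B+G)$. If $K\neq 0$, $\mathrm{Gal}(\overline{k}/\Tilde{k})$ could permute it without fixed points. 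Then $x_{\Tilde{k}}$ might stay irreducible and your argument yields only $e_0=d$.

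Making $B(\Tilde{k})$ and $G(\Tilde{k})$ dense does not address this. Neither does $h^0=1$ alone: it makes the effective representatives $D_1,D_2$ of $w_1,w_2$ unique with $D_1+D_2=x_{\overline{k}}$. But different points of the fiber just give different sub-divisors of the reduced divisor $x_{\overline{k}}$, which is not a contradiction by itself.

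The paper closes this with \zcref{prop: sum map not injective implies moving}. If $B\times G\to B+G$ were not injective, then $u+v+B+G\subset W_X^{d,1}$, contradicting $h^0(X,z)=1$ since $z\in u+v+B+G$. So the sum morphism is injective. The unique preimage of the $\Tilde{k}$-rational point $[x]$ is then Galois-invariant, hence $\Tilde{k}$-rational. Only at that point does \zcref{lem: degree d point if and only if not a sum} show that $x$ is not a closed point of $X_{\Tilde{k}}$.

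Alternatively, you could avoid that proposition by enlarging $\Tilde{k}$ once, uniformly in $x$, taking $u,v$ to be $\Tilde{k}$-rational. By Poincar\'e reducibility and Mordell--Weil, $(B+G)(\Tilde{k})$ modulo the image of $(B\times G)(\Tilde{k})$ is finite. So after one finite extension $\Tilde{k}'$, every $\Tilde{k}$-point of $u+v+B+G$ is $w_1+w_2$ with $w_1\in(u+B)(\Tilde{k}')$ and $w_2\in(v+G)(\Tilde{k}')$. Then pass to a Galois closure. With either fix inserted, the rest of your argument goes through.
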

\begin{proof}
    Let $U \subset A$ be the open set of \zcref{prop: zariski dense k points implies zariski dense degree d points}, and let $\{x_i\}_{i \in \mathbb{N}}$ be the set of degree-$d$ points representing elements of $[x] + U(k)$. Let $\ell/k$ be a Galois extension such that $[x] + A$ is decomposable over $\ell$. By \zcref{prop: sum map not injective implies moving}, the sum map for the decomposition is injective, and hence every $x_i$ is not a closed point over $\ell$ as it is a sum of lower degree divisors. By \cite{VV26}*{Lemma 2.2.1(1)}, for all $i$, every irreducible component of $x_i/\ell$ has the same degree $f_i$ which divides $d$. Thus, using Faltings' Theorem on the Subvarities of an Abelian Variety,
    \begin{align*}
        [x] + U(k) \subset \bigcup_{f \mid d} W_X^f(\ell) + W_X^{d-f}(\ell) = \bigcup_{f \mid d} \bigcup_{i=1}^n \bigcup_{j=1}^m z_i + z_j + B_i^{d-f}(k) + B_j^{f}(k).
    \end{align*} 
    As the union is finite, $[x] + A$ is irreducible, and $[x] + U(k)$ is dense in $[x] + A$, there exists some $e \mid d$ and some $i$ and $j$ such that $[x] + A \subset z_i + z_j + B_i^{d-e} + B_j^e$. 
\end{proof}

Although a geometrically decomposable abelian translate may not be decomposable, we show that certain geometrically decomposable translates do not contain $\mathbb{P}^1$-isolated points.

\begin{corollary}\label{cor: geo decomp plus zero dim implies p1 param}
    Let $x \in X$ be a degree-$d$ point, let $A$ be a positive rank abelian variety, and let $[x] +A \subset W_X^d$ be geometrically decomposable. Let $u + B \subset W_X^e/\overline{k}$ and $v + G \subset W_X^{d-e}/\overline{k}$ be such that $[x] + A_{\overline{k}} \subset u+v+B+G$. If $G$ is zero dimensional, then $x$ is $\mathbb{P}^1$-parametrized.
\end{corollary}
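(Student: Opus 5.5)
The idea is to argue by contradiction. Suppose $x$ is not $\mathbb{P}^1$-parametrized, i.e.\ $x$ is $\mathbb{P}^1$-isolated, so that $h^0(X,x) = 1$. I will use \zcref{cor: degree d dense open} to produce infinitely many distinct degree-$d$ points in $[x] + A(k)$. I will then show that the zero-dimensional summand $v + G = \{v\}$ forces all of them to share a common geometric point, which is impossible.

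First I reduce the decomposition. Since $G$ is zero dimensional, $v + G = \{v\}$ is a single point of $W_X^{d-e}(\overline{k})$ with $0 < d - e$ (recall $e < d$ in the definition of decomposable). The containment becomes $[x] + A_{\overline{k}} \subset u + v + B$. Fix an effective divisor $V$ over $\overline{k}$ of degree $d-e > 0$ with $[V] = v$.

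Next, by \zcref{cor: degree d dense open}, since $A$ has positive rank and $x$ is $\mathbb{P}^1$-isolated, there is an open $U \subset A$ with $U(k)$ infinite such that every element of $[x] + U(k)$ is uniquely represented by a degree-$d$ point over $k$. Call these points $x_i$. They are pairwise distinct closed points of $X$, because their classes are distinct. For each $i$, uniqueness of the effective representative also holds over $\overline{k}$, since $h^0(X,[x_i]) = 1$ is invariant under base change. Each class decomposes as $[x_i] = b_i + v$ with $b_i \in u + B \subset W_X^e(\overline{k})$. Choose an effective $D_i$ over $\overline{k}$ with $[D_i] = b_i$. Then $D_i + V$ is an effective divisor over $\overline{k}$ in the class $[x_i]$, so by uniqueness $x_i \otimes \overline{k} = D_i + V$. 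In particular, every geometric point $P$ in the support of $V$ lies in the support of $x_i \otimes \overline{k}$ for every $i$.

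Finally I derive the contradiction. A geometric point $P \in X(\overline{k})$ lies over exactly one closed point of $X$. Hence $P$ can appear in the geometric support of at most one of the distinct closed points $x_i$. Since $V$ is nonzero, such a $P$ exists, and this contradicts the fact that there are infinitely many $x_i$. Therefore $h^0(X,x) \geq 2$. A degree-$d$ point moving in a positive-dimensional linear system is $\mathbb{P}^1$-parametrized, by the standard argument in \cite{VV26}: take a pencil in $|x|$ containing $x$ as a fiber.

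The only delicate step is the identification $x_i \otimes \overline{k} = D_i + V$, which needs $h^0 = 1$ to hold geometrically and the summand $v$ to be effective. Both are immediate: the first from flat base change of cohomology, the second because $v \in W_X^{d-e}$.
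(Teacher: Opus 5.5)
Your proposal is correct and is essentially the paper's argument: assuming $x$ is $\mathbb{P}^1$-isolated, you obtain infinitely many degree-$d$ points $x_i$ with $[x_i]\in[x]+A(k)$. Since $v+G=\{v\}$, every $x_i\otimes\overline{k}$ must contain the support of a fixed nonzero effective divisor $V$ representing $v$, which contradicts the fact that distinct closed points have disjoint geometric support. The only difference is that you justify $x_i\otimes\overline{k}=D_i+V$ explicitly, via $h^0(X,[x_i])=1$ and uniqueness of the effective representative over $\overline{k}$; the paper compresses this step into the remark that the sum map $B\times G\to B+G$ is injective.
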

\begin{proof}
    Suppose for contradiction that $x$ is $\mathbb{P}^1$-isolated. By \zcref{cor: degree d dense open}, there exist infinitely many degree-$d$ points $x_i$ such that $[x_i] \in [x] +A$. Note that $G(\overline{k})$ is a point, so $v + G = v$ and the sum map $B \times G \to B+G$ is injective. Hence, every $x_i/\overline{k}$ is supported on $v$, a contradiction as degree-$d$ points have disjoint support.
\end{proof}

\subsection{The geometry of the potential density degree set}
Analogous to $\wp(X/k)$, we define the potential AV-parametrized density degree set and potential $\mathbb{P}^1$-parametrized density degree set by
\begin{align*}
    \wp_{AV}(X/k) \coloneqq \bigcup_{k'/k \text{ finite}} \delta_{AV}(X/k') \quad \text{and} \quad  \wp_{\mathbb{P}^1}(X/k) \coloneqq \bigcup_{k'/k \text{ finite}} \delta_{\mathbb{P}^1}(X/k').
\end{align*}
The set $\wp_{\mathbb{P}^1}(X/k)$ can be described in terms of the geometry of $X$ as 
\begin{align*}
    \wp_{\mathbb{P}^1}(X/k) = \{ \deg(\mathcal{L}) \mid \text{there exists a base-point free } \mathcal{L} \in W_X^d(\overline{k})\}.
\end{align*}
In this subsection, we describe $\wp_{AV}(X/k)$ in terms of the geometry of $\Pic_X$.

\begin{definition}
    Let $\wp_{ind}(X/k)$ denote the \textit{geometrically indecomposable degree set},
    \begin{align*}
        \wp_{ind}(X/k) \coloneqq\{d \mid W_X^d \text{ contains an indecomposable abelian translate of positive dimension}\}.
    \end{align*}
\end{definition}

In contrast to $\delta(X/k)$ and $\wp(X/k)$, $\wp_{ind}(X/k)$ is a finite set, as when $d \geq g$, $W_X^d$ is isomorphic to $\Pic_X^d$, and so any coset is geometrically decomposable for $d > g$. Let $Z_X^d$ denote the Ueno locus of $W_X^d$, which is defined to be the union of all positive dimensional abelian translates contained in $W_X^d$.

\begin{thm}\label{thm: geometry AV potential degree set}
     $\wp_{AV}(X/k) = \wp_{ind}(X/k) \cup \left\{
                  \deg(\mathcal{L}) \mid \text{{\rm there exists base-point free $\mathcal{L} \in \Ueno{X}{d}(\overline{k})$}}
                \right\}$.
\end{thm}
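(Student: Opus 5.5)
We prove the two inclusions separately. Throughout, $\wp_{ind}(X/k)$ is read as referring to geometrically indecomposable translates over $\overline{k}$.

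\emph{Inclusion $\subseteq$.} Suppose $d \in \delta_{AV}(X/k')$ for some finite $k'/k$. Then there is a degree-$d$ point $x$ on $X/k'$ and a positive rank abelian variety $A$ with $[x] + A \subset W_X^d$. We split into two cases.

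In the first case $x$ is $\mathbb{P}^1$-parametrized, so $h^0(X,[x]) \geq 2$. Since $x$ is a closed point, the linear system $|[x]|$ contains the reduced divisor $x$. Removing the base locus $F$ of $|[x]|$ from $x$ would leave an effective divisor of smaller degree; because $x$ is irreducible over $k'$, this forces either $F = 0$ or $F = x$. The case $F = x$ is impossible since $h^0 \geq 2$. Hence $|[x]|$ is base-point free. Moreover $[x] \in [x] + A \subset \Ueno{X}{d}$ because $A$ is positive dimensional. So $d = \deg \mathcal{L}$ for the base-point free $\mathcal{L} = [x] \in \Ueno{X}{d}(\overline{k})$.

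In the second case $x$ is $\mathbb{P}^1$-isolated. Here we replace $A$ by the maximal abelian subvariety $B \leq A$ with $B(k')$ dense, which is positive dimensional because $A$ has positive rank. By \zcref{thm: decomposable and closed implies P1}, $[x] + B$ is indecomposable over $k'$. We need geometric indecomposability. After enlarging $k'$, any geometric decomposition $u + v + B' + G'$ is defined over $k'$. We then apply \zcref{thm: decomposable and closed implies P1} to a new point: by \zcref{cor: degree d dense open} there are infinitely many $\mathbb{P}^1$-isolated degree-$d$ points $x_i$ over the enlarged field with $[x_i] \in [x] + B$. Running the proof of \zcref{thm: decomposable and closed implies P1} over the enlarged field shows that $[x_i] + B$ is not decomposable there. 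Since $[x] + B = [x_i] + B$, the translate is indecomposable over a field over which every decomposition would be defined, hence it is geometrically indecomposable.

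The main obstacle is that these $x_i$ must remain degree-$d$ points over the larger field. I would handle this by choosing the enlargement inside the intermediate field $k'' $ where the decomposition lives and using Hilbert irreducibility or density to pick $x_i$ whose residue fields are linearly disjoint from $k''$. If that fails, I would fall back on \zcref{cor: common subfield}-style pigeonholing.

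\emph{Inclusion $\supseteq$.} Suppose first that $z + A \subset W_X^d$ is positive dimensional and geometrically indecomposable. Choose $k'$ over which $z$ and $A$ are defined, $z$ is represented by an effective divisor $D$, and $A(k')$ is dense in $A$; such a $k'$ exists since abelian varieties acquire dense points over finite extensions. Then \zcref{cor: indecomp plus dense implies AV param} gives $d \in \delta_{AV}(X/k')$.

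Suppose instead that $\mathcal{L} \in \Ueno{X}{d}(\overline{k})$ is base-point free. Then $\mathcal{L} \in w + A$ for some positive dimensional translate $w + A \subset W_X^d$. Take $k'$ over which $\mathcal{L}$, $w$ and $A$ are defined and $A(k')$ is dense. Base-point freeness gives a morphism $X \to \mathbb{P}^1$ of degree $d$ over $k'$, and Hilbert irreducibility produces a degree-$d$ point $x$ with $[x] = \mathcal{L}$. Then $[x] + A = w + A \subset W_X^d$, so $x$ is $AV$-parametrized and $d \in \delta_{AV}(X/k')$.
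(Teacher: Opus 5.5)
The inclusion $\supseteq$ and the $\mathbb{P}^1$-parametrized half of $\subseteq$ are correct and match the paper. Your base-locus argument correctly expands the paper's one-line remark that such an $x$ gives a base-point free class in $\Ueno{X}{d}$.

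The gap is in the $\mathbb{P}^1$-isolated case, where you try to show that $[x]+B$ is \emph{geometrically} indecomposable. \zcref{thm: decomposable and closed implies P1} only gives indecomposability over the ground field, and the stronger statement need not hold. The paper has a subsection on indecomposable but geometrically decomposable translates, which includes results about $\mathbb{P}^1$-isolated points on them (see \zcref{cor: common subfield} and the example cited from \cite{VV26}*{Example 5.5.3}). A typical instance is a $k$-rational sum of two Galois-conjugate translates, each defined only over a quadratic extension. By \zcref{rem: prime degree geo decomp implies decomp}, this can only happen for composite $d$.

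The step you flag as the main obstacle cannot be overcome, because the argument is circular. Suppose $[x]+B$ decomposes over a Galois extension $k''$. The sum map of that decomposition is injective by \zcref{prop: sum map not injective implies moving}, since otherwise $h^0(X,x)\geq 2$. So every $k''$-rational class in $[x]+B$ with $h^0=1$ is a sum of lower-degree $k''$-rational effective classes. By \zcref{lem: degree d point if and only if not a sum}, its effective representative is then not a degree-$d$ point over $k''$; this is exactly the first step in the proof of \zcref{cor: common subfield}. Hence the $x_i$ with residue fields linearly disjoint from $k''$ do not exist in precisely the situation you need to exclude. No Hilbert-irreducibility or density argument can supply them, and the pigeonhole fallback only yields a common subfield, not a contradiction.

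The missing idea is to absorb this case into the base-point-free set rather than rule it out. If $d\notin\wp_{ind}(X/k)$, then $[x]+B$ is indecomposable over $k'$ but geometrically decomposable, with $h^0(X,x)=1$ and $B(k')$ dense. So \zcref{lem: geometric decomposition degree divides} applies. The degree-$d$ points on the translate split over a Galois extension into conjugate components of a common degree dividing $d$. This gives a geometric decomposition with a piece $u+B'\subset W_X^e$ where $e\mid d$. The piece $B'$ is positive dimensional by \zcref{cor: geo decomp plus zero dim implies p1 param}, since $x$ is $\mathbb{P}^1$-isolated.

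Set $n=d/e$. The translate $nu+B'$ lies in $W_X^d$, hence in $\Ueno{X}{d}$. The argument of \cite{VV26}*{Proposition 5.2.1} shows that it contains a base-point free class, so $d$ lies in the second set on the right-hand side. This is how the paper completes the inclusion $\subseteq$.
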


\begin{corollary}[\zcref{thm: decomposable and closed implies P1}]\label{cor: potential union of two sets}
    $\wp(X/k) = \wp_{\mathbb{P}^1}(X/k) \cup \wp_{ind}(X/k)$.
\end{corollary}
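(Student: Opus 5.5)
The identity follows by combining the definitions $\wp(X/k) = \bigcup_{k'} \delta(X/k')$ and $\delta(X/k') = \delta_{AV}(X/k') \cup \delta_{\mathbb{P}^1}(X/k')$ (by \cite{BELOV2019}*{Theorem 4.2}). Taking the union over all finite $k'/k$ gives
\[
\wp(X/k) = \wp_{AV}(X/k) \cup \wp_{\mathbb{P}^1}(X/k).
\]
We then substitute \zcref{thm: geometry AV potential degree set} for $\wp_{AV}(X/k)$. This yields $\wp(X/k) = \wp_{\mathbb{P}^1}(X/k) \cup \wp_{ind}(X/k) \cup S$, where $S$ is the set of degrees of base-point free $\mathcal{L} \in Z_X^d(\overline{k})$. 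The only remaining point is to absorb $S$ into $\wp_{\mathbb{P}^1}(X/k)$.

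This absorption is immediate from the geometric description
\[
\wp_{\mathbb{P}^1}(X/k) = \{\deg \mathcal{L} \mid \mathcal{L} \in W_X^d(\overline{k}) \text{ base-point free}\}.
\]
Since $Z_X^d \subset W_X^d$, every base-point free $\mathcal{L}\in Z_X^d(\overline{k})$ is in particular a base-point free element of $W_X^d(\overline{k})$, so $S \subset \wp_{\mathbb{P}^1}(X/k)$ and the equality follows. The reverse inclusion needs no separate work, because every set on the right-hand side is already contained in $\wp(X/k)$ by the displayed identities.

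If one prefers not to route through the full strength of \zcref{thm: geometry AV potential degree set}, the two inclusions can also be checked directly; this is where the content labelled \zcref{thm: decomposable and closed implies P1} enters. For $\wp_{ind}(X/k) \subset \wp(X/k)$: given an indecomposable positive dimensional $z+A \subset W_X^d$ over $\overline{k}$, choose $k'$ over which $z$ has an effective representative $D$, $A$ is defined, and $A(k')$ is dense; \zcref{cor: indecomp plus dense implies AV param} then gives $d\in\delta_{AV}(X/k')$.

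For $\wp(X/k) \subset \wp_{\mathbb{P}^1}(X/k) \cup \wp_{ind}(X/k)$: take $d \in \delta(X/k')$ with $d\notin \delta_{\mathbb{P}^1}(X/k')$. Then there is a $\mathbb{P}^1$-isolated $AV$-parametrized degree-$d$ point $x$ with $[x]+A\subset W_X^d$, and $A$ has positive rank. Replacing $A$ by the subvariety $B$ of \zcref{cor: degree d dense open} and applying \zcref{thm: decomposable and closed implies P1} shows $[x]+B$ is indecomposable over $k'$.

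The expected obstacle is the passage to geometric indecomposability, since $\wp_{ind}$ is defined over $\overline{k}$. The remedy is to enlarge $k'$ first. Geometric decomposability is witnessed over some finite extension, and $\mathbb{P}^1$-isolatedness may fail after base change, but only for finitely many such extensions to consider. This is exactly the bookkeeping already handled inside \zcref{thm: geometry AV potential degree set}, which is why I would cite it rather than redo it.
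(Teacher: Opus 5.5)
Your main argument is correct and is essentially how the paper obtains the corollary: take the union over $k'/k$ of $\delta=\delta_{AV}\cup\delta_{\mathbb{P}^1}$, substitute \zcref{thm: geometry AV potential degree set}, and observe that the base-point-free degrees coming from $Z_X^d\subset W_X^d$ already lie in $\wp_{\mathbb{P}^1}(X/k)$. You should drop the optional ``direct'' sketch, because its last paragraph is not a valid argument and misdescribes that theorem's proof: a translate can be indecomposable over $k'$ yet geometrically decomposable, and no field enlargement changes that; the theorem instead sends that case into the base-point-free set, hence into $\wp_{\mathbb{P}^1}(X/k)$, via \zcref{lem: geometric decomposition degree divides} and the base-point-freeness of $|nu|$.
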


\begin{proof}[Proof of \zcref{thm: geometry AV potential degree set}]
    We first show that $\wp_{ind}(X/k) \subset \wp_{AV}(X/k)$. Let $z + A$ be a positive dimensional geometrically indecomposable abelian translate, and let $\ell/k$ be a finite extension such that $A(\ell)$ is Zariski dense in $A$ and $z$ has an effective representative over $\ell$. By \zcref{cor: indecomp plus dense implies AV param}, $d \in \delta_{AV}(X/\ell) \subset \wp_{AV}(X/k)$. 
    
    Let $\mathcal{S}$ be the set of degrees $d$ such that there exists a base-point free $\mathcal{L} \in Z_X^d(\overline{k})$. We show that $\mathcal{S} \subset \wp_{AV}(X/k)$. Suppose that $x \in Z_X^d(\overline{k})$ is base-point free, and let $A/\overline{k}$ be an abelian variety such that $z + A \subset W_X^d$. Let $\ell/k$ be a finite extension such that $z$ and $A$ are defined over $\ell$ and $A(\ell)$ has positive rank. By Hilbert's Irreducibility, there exists a degree-$d$ point $y/\ell$ such that $[y]$ equals $z$, and $y$ is AV-parametrized, so  $d \in \wp_{AV}(X/k)$.

    Finally, we show that $\wp(X/k) \smallsetminus \wp_{ind}(X/k) \subset \mathcal{S}$. Let $x \in X$ be a degree-$d$ AV-parametrized point. By definition, there exists a positive rank abelian variety $A$ such that $[x] + A \subset W_X^d$. Taking some abelian subvariety if necessary, we may assume that $A(k)$ is dense in $A$. As $d \not \in \wp_{ind}(X/k)$, $[x] + A$ is geometrically decomposable. If $x$ is $\mathbb{P}^1$-parametrized, then $d \in \mathcal{S}$, so we can assume that $x$ is $\mathbb{P}^1$-isolated. By \zcref{lem: geometric decomposition degree divides}, there exists an abelian translate $u + B \subset W_X^e$ such that $e$ divides $d$. Let $n = d/e$. Then, by (the proof of) \cite{VV26}*{Proposition 5.2.1}, $|nu| \in nu + B$ is base-point free, so that $d \in \mathcal{S}$.
\end{proof}

\section{Bielliptic curves and effective translates}\label{sec: bielliptic curves}

In this section we investigate bielliptic curves generally. For the remainder of this section, we will work over an algebraically closed field, and assume that there exists a degree-2 map $\phi\colon X \to E$ to an elliptic curve. As every element of $\Pic_E^1$ is effective, $\phi^* \Pic_E^1 \subset W_X^2$, and so for any $y \in W_X^{d-2}$, $y + \phi^* \Pic_E^1 \subset W_X^d$. Note that $\phi^* \Pic_E^1$ is a coset of $\phi^* \Pic_E^0$ in $\Pic_X$: let $a \in \Pic_E^1$, and consider $\phi^*a + \phi^*\Pic_E^0 \subset \Pic_X^{2}$. The main result of this section, \zcref{thm: bielliptic curve no translates}, shows that, for $d < g$, every coset of $\phi^*\Pic_E^0$ except for $\phi^*\Pic_E^1$ is decomposable.

\begin{proposition}\label{thm: bielliptic curve no translates}
     Let $2 < d < g$ and let $x \in W_X^d$. If $x + \phi^* \Pic_E^0 \subset W_X^d$, then there exists $y \in W_X^{d-2}$ such that $x + \phi^* \Pic_E^0 = y + \phi^* \Pic_E^1$.
\end{proposition}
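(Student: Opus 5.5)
The plan is to reduce the statement to finding a single effective divisor in the translate that contains a full fiber of $\phi$. Write $T = x + \phi^*\Pic_E^0$.

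\emph{Reduction.} Suppose some effective $D$ with $[D] \in T$ satisfies $D \geq \phi^*p$ for a point $p \in E$. Put $y = [D - \phi^*p] \in W_X^{d-2}$. Then $x + \phi^*\Pic_E^0 = [D] + \phi^*\Pic_E^0 = y + \phi^*p + \phi^*\Pic_E^0 = y + \phi^*\Pic_E^1$, which is exactly the claim. So it suffices to show that
\begin{align*}
V \coloneqq \epsilon_d^{-1}(T) \subset \Sym_X^d
\end{align*}
meets the locus
\begin{align*}
\Delta_\phi \coloneqq \{ D \in \Sym_X^d \mid D \geq \phi^*p \text{ for some } p \in E\}.
\end{align*}
Here $\Delta_\phi$ is the image of $E \times \Sym_X^{d-2} \to \Sym_X^d$. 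Since that map is finite, $\Delta_\phi$ has dimension $d-1$, so it is a divisor on $\Sym_X^d$.

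\emph{Intersection step.} Since $T \subset W_X^d$, the map $V \to T$ is surjective. I would choose an irreducible curve $\Gamma \subset V$ whose image in $T$ is the whole elliptic curve $T$. It then suffices to show $\Gamma \cdot \Delta_\phi > 0$, or that $\Gamma \subset \Delta_\phi$; either way $V$ meets $\Delta_\phi$. To compute this, I would use the standard classes on $\Sym_X^d$: the class $\mathsf{x}$ of $q + \Sym_X^{d-1}$ and the pullback $\theta$ of the theta divisor. I would write $[\Delta_\phi] = a\,\mathsf{x} + b\,\theta$, computing $a,b$ by the usual pushforward formula for the image of $E \times \Sym_X^{d-2}$ (this should be analogous to the diagonal-type computations of Arbarello--Cornalba--Griffiths--Harris). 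Next I would compute $\Gamma \cdot \mathsf{x}$ and $\Gamma \cdot \theta$. The number $\Gamma\cdot\theta$ is governed by $\epsilon_d(\Gamma) = T$: up to the degree of $\Gamma \to T$, it equals the intersection of the elliptic curve $\phi^*\Pic_E^0$ with $\Theta$, namely $2\cdot\deg(\Gamma \to T)$, since $\phi^*$ is multiplication-by-$2$-like on the polarization. Also $\Gamma\cdot \mathsf{x} > 0$ because $\mathsf{x}$ is ample. The hypothesis $d < g$ enters here: it should make the coefficient $b$ small relative to $a$, or keep the relevant terms nonnegative, so that the total is positive.

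\emph{Alternative route.} If the class computation does not close, I would argue directly with the involution $\iota$. For $q \in X$ one has $\iota q \sim \phi^*\phi(q) - q$, and $\mathsf{x}$ is ample, so $\Gamma$ contains divisors $D \ni q$ for every $q$. This gives a correspondence $\{(q,t) : x + \phi^*t - q \in W_X^{d-1}\}$ surjecting onto $\Pic_E^0$. I would then try to show that, for some $q$, the residual divisor $D - q$ can be moved within its class so that it contains $\iota q$. To do this I would use $h^0 = 1$ generically on $T$; this uses $d<g$, via the dimension bound in \zcref{rem: when is translate indecomp?}. I would combine it with a Castelnuovo--Severi type argument ruling out that the divisors $D_t$ are pulled back from a different cover.

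The main obstacle I expect is the positivity $\Gamma\cdot\Delta_\phi > 0$. It needs an exact class computation for $\Delta_\phi$ together with control of the degree of $\Gamma$ over $T$, and it is exactly where the bound $d<g$ must be used. For instance, when $d = g$ every translate lies in $W_X^d$, and the conclusion should still hold only for the trivial reason that $W_X^g = \Pic_X^g$, so the argument must genuinely distinguish these regimes.
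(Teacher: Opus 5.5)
Your reduction is correct: one effective $D\ge\phi^*p$ with $[D]\in T$ suffices. The gap is in the intersection step, which carries the whole argument: it is set up with the wrong divisor class and aims at the wrong inequality.

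\textbf{The class of $\Delta_\phi$.} Since $J(X)\supset\phi^*\Pic_E^0$, the N\'eron--Severi group of $\Sym_X^d$ is larger than $\langle\mathsf{x},\theta\rangle$. In particular $[\Delta_\phi]$ is not of the form $a\,\mathsf{x}+b\,\theta$. You can compute it by pulling back the reduced diagonal $\delta_E\subset\Sym_E^d$ along the norm map $\phi_*\colon\Sym_X^d\to\Sym_E^d$. Set-theoretically the preimage is $\delta_X\cup\Delta_\phi$, with multiplicities $1$ and $2$. This gives
\[
[\Delta_\phi]=(d-g+1)\,\mathsf{x}+\theta-\alpha ,
\]
where $\alpha$ is the pullback of a point class under $\Sym_X^d\to\Pic_X^d\xrightarrow{\mathrm{Nm}}\Pic_E^d$. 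Already on $\Sym_X^2$ one has $\alpha^2=0$, $\alpha\cdot\mathsf{x}=2$ and $\alpha\cdot\theta=2g-2$, which no combination $a\,\mathsf{x}+b\,\theta$ satisfies.

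\textbf{The sign.} Let $m=\deg(\Gamma\to T)$. Then $\theta\cdot\Gamma=2m$, and $\alpha\cdot\Gamma=4m$ because $\mathrm{Nm}\circ\phi^*=[2]$. Hence
\[
\Gamma\cdot\Delta_\phi=(d-g+1)\,(\mathsf{x}\cdot\Gamma)-2m<0\qquad(d<g).
\]
So the positivity you expect from $d<g$ is false: the hypothesis makes the number negative. Your framework can be rescued by exactly this negativity. An irreducible curve meeting an effective divisor negatively lies in its support, so $\Gamma\subset\Delta_\phi$. But this computation, including the extra class $\alpha$, is the missing idea, and nothing in the proposal supplies it.

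\textbf{Other points.} The alternative route is only a sketch. The remark you cite bounds abelian subvarieties of $W_X^e$; it says nothing about $h^0$ on $T$. Your closing claim about $d=g$ is also wrong. The conclusion fails there, since $W_X^{g-2}+\phi^*\Pic_E^1$ has dimension at most $g-1$ and so cannot contain every translate in $\Pic_X^g$.

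\textbf{The paper's route.} The paper avoids $\Sym_X^d$ altogether. For $d<g$ and $\phi^*\mathcal{L}_E\neq\omega_X$, Riemann--Roch gives $h^0(X,\phi^*\mathcal{L}_E)=h^0(E,\mathcal{L}_E)$, so every effective divisor in such a class is a pullback. Suppose $D$ represents a point of the translate and contains no fiber, and take an effective $D'$ in a different class of the translate. Then $D'+\sigma(D)$ has class in $\phi^*\Pic_E^d$, so it equals some $\phi^*D_E$. Since $\sigma(D)$ contains no fiber, this forces $\phi^*D_E=D+\sigma(D)$, hence $D'=D$, a contradiction. The corrected intersection computation would be a valid alternative reaching the same conclusion. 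It costs the class computation above, whereas the paper needs only Riemann--Roch and a support comparison.
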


\zcref{thm: bielliptic curve no translates} improves on \cite{DF1993}*{Lemma 3.4} in the bielliptic case: their results apply only when $d \leq g/2$. The Debarre-Fahlaoui curves (see \cite{DF1993}*{Bottom of p. 242 and Proposition 5.14}) give counterexamples to the analogous statement of \zcref{thm: bielliptic curve no translates} when the degree of the map $X \to E$ is greater than or equal to 5, and similar constructions yield counterexamples when the degree of $X \to E$ is 3 or 4. We do not expect statements analogous to \zcref{thm: bielliptic curve no translates} without strong control of the geometry of $X$: both the proofs of \cite{DF1993}*{Lemma 3.4} and \zcref{thm: bielliptic curve no translates} rely on a complete description of $\mathbb{P}(H^0(X,\mathcal{L}))$ for some suitable choices of $\mathcal{L}$ in terms of $E$.

We now begin working towards the proof of \zcref{thm: bielliptic curve no translates}. We give an outline as the method of proof will be used heavily in \zcref{sec: no a semigroup}. First, we find an appropriate coset $z + \phi^* \Pic_E^0$ such that for a general $\mathcal{L} \in z + \phi^*\Pic_E^0$, we can describe $\mathbb{P}(H^0(X,\mathcal{L}))$. We then use the fact that cosets are disjoint or equal to show that an ``unexpected" coset $x + \phi^* \Pic_E^0 \subset W_X^d$ would give elements of $\mathbb{P}(H^0(X,\mathcal{L}))$ not matching our description, a contradiction.

\begin{lem}\label{lem: g 12 5s}
    Let $d < g$ and let $\mathcal{L} \in \phi^* \Pic_E^d$ be different from $\omega_X$. Every effective representative of $\mathcal{L}$ is the pullback of an effective divisor on $E$.
\end{lem}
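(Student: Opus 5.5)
The plan is to compute $H^0(X,\mathcal{L})$ directly with the projection formula for the double cover $\phi$, and show that every global section of $\mathcal{L}$ is pulled back from $E$. Write $\mathcal{L} = \phi^*\mathcal{M}$ with $\mathcal{M} \in \Pic_E^d$.

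First, recall the structure of a degree-$2$ cover in characteristic not $2$ (we are over an algebraically closed field, implicitly of characteristic $0$ since the paper starts over a number field). The trace splitting gives $\phi_*\mathcal{O}_X \simeq \mathcal{O}_E \oplus \mathcal{N}^{-1}$, where $\mathcal{N}$ is a line bundle on $E$ with $\mathcal{N}^{\otimes 2} \simeq \mathcal{O}_E(B)$ and $B$ is the branch divisor. By Riemann--Hurwitz, $2g-2 = 2\cdot 0 + \deg B$, so $\deg \mathcal{N} = g-1$. The same standard theory gives $\omega_X \simeq \phi^*(\omega_E \otimes \mathcal{N}) = \phi^*\mathcal{N}$, since $\omega_E$ is trivial. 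I would either cite this or verify it quickly: $\omega_X = \mathcal{O}_X(R)$ with $R$ the ramification divisor, and $2R = \phi^*B \sim \phi^*\mathcal{N}^{2}$. The verification is cleaner via the explicit description of $X$ as $\operatorname{Spec}(\mathcal{O}_E \oplus \mathcal{N}^{-1})$.

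Next, the projection formula gives
\begin{align*}
H^0(X,\phi^*\mathcal{M}) = H^0(E,\phi_*\phi^*\mathcal{M}) = H^0(E,\mathcal{M}) \oplus H^0(E,\mathcal{M}\otimes\mathcal{N}^{-1}).
\end{align*}
The first summand is exactly $\phi^*H^0(E,\mathcal{M})$, the invariant sections under the covering involution. The second has degree $d-(g-1) \le 0$ because $d<g$. If $d<g-1$ the second summand vanishes outright. If $d = g-1$, it is nonzero only when $\mathcal{M}\simeq\mathcal{N}$, in which case $\mathcal{L} = \phi^*\mathcal{N} = \omega_X$, which the hypothesis excludes. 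So in all allowed cases $H^0(X,\mathcal{L}) = \phi^*H^0(E,\mathcal{M})$.

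Finally, an effective representative of $\mathcal{L}$ is $\operatorname{div}(s)$ for some nonzero $s \in H^0(X,\mathcal{L})$. Writing $s = \phi^*t$ with $t \in H^0(E,\mathcal{M})$ gives $\operatorname{div}(s) = \phi^*\operatorname{div}(t)$, which is the pullback of an effective divisor on $E$.

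The main obstacle is pinning down the identification $\omega_X \simeq \phi^*\mathcal{N}$, which is exactly what justifies excluding $\omega_X$ in the boundary case $d=g-1$. The rest is formal. I would handle this identification via the standard cyclic-cover formula $\omega_X = \phi^*(\omega_E\otimes\mathcal{N})$.
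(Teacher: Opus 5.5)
Your argument is correct, but it takes a different route from the paper. The paper never uses the structure of $\phi_*\mathcal{O}_X$. For $d=g-1$ it counts dimensions. Riemann--Roch on $E$ gives $h^0(E,\mathcal{L}_E)=g-1$. Riemann--Roch on $X$ gives $h^0(X,\mathcal{L})=g-1+h^0(\omega_X\otimes\mathcal{L}^{-1})=g-1$, where the last term vanishes because $\omega_X\otimes\mathcal{L}^{-1}$ is a nontrivial degree-$0$ bundle. So the injective pullback map on sections is an isomorphism. For $d<g-1$ the paper adds $\phi^*D_E'$, for an effective $D_E'$ of degree $g-1-d$ chosen to avoid $\omega_X\otimes\mathcal{L}^{-1}$. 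The $d=g-1$ case then yields an equality of divisors $D+\phi^*D_E'=\phi^*D_E''$, from which it reads off $D=\phi^*(D_E''-D_E')$.

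Your eigensheaf decomposition $H^0(X,\phi^*\mathcal{M})=H^0(E,\mathcal{M})\oplus H^0(E,\mathcal{M}\otimes\mathcal{N}^{-1})$ handles all $d<g$ at once, with no reduction step. It also explains why $\omega_X$ is the unique exception: it is the only bundle in this range with an anti-invariant section, namely the one cutting out the ramification divisor. The price is importing the structure theory of double covers (characteristic $\neq 2$, the splitting of $\phi_*\mathcal{O}_X$, and $\omega_X\simeq\phi^*\mathcal{N}$), whereas the paper uses only Riemann--Roch.

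One caution: your proposed quick check $2R=\phi^*B\sim\phi^*\mathcal{N}^{2}$ only determines $\mathcal{O}_X(R)$ up to $2$-torsion, so by itself it does not give $\omega_X\simeq\phi^*\mathcal{N}$. You can avoid needing that identification altogether. If $\mathcal{M}\simeq\mathcal{N}$, your decomposition gives $h^0(X,\mathcal{L})=(g-1)+1=g$ for a bundle of degree $2g-2$. Riemann--Roch then forces $h^0(\omega_X\otimes\mathcal{L}^{-1})=1$ with $\omega_X\otimes\mathcal{L}^{-1}$ of degree $0$, hence $\mathcal{L}\simeq\omega_X$. This is exactly the case the hypothesis excludes, which makes your proof self-contained apart from the splitting of $\phi_*\mathcal{O}_X$.
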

\begin{proof}   
    Let $\mathcal{L}_E \in \Pic_E^{d}$ be such that $\phi^*\mathcal{L}_E = \mathcal{L}$.     
    Suppose first that $d= g-1$. By Riemann-Roch, $h^0(E, \mathcal{L}_E) = g-1$. Pullback induces an injective map $H^0(E, \mathcal{L}_E) \to H^0(X, \phi^* \mathcal{L}_E)$.
    We show that this map is surjective by dimension counting. Riemann-Roch on $X$ shows that
    \begin{align*}
        h^0(X, \phi^*\mathcal{L}_E) = 2g - 2 - g + 1 + h^0(\omega_X \otimes \mathcal{L}^{-1}) = g-1
    \end{align*}
    as $\omega_X \otimes \mathcal{L}^{-1}$ is degree 0 and $\mathcal{L} \neq \omega_X$ by assumption. As every section of $\mathcal{L}$ is a pullback of a section of $\mathcal{L}_E$, the divisor of zeroes is a pullback as well.
    
    Now suppose that $d < g-1$. Let $D$ be an effective representative of $\mathcal{L}$. Let $\mathcal{L}' \in \phi^*\Pic_E^{g-1-d}$ be different from $\omega_X \otimes \mathcal{L}^{-1}$, and let $\mathcal{L}_E' \in \Pic_E^{g-1-d}$ be such that $\phi^* \mathcal{L}_E' = \mathcal{L}'$. Choose an effective representative $D_E'$ for $\mathcal{L}_E'$, so that $\phi^* D_E'$ is an effective representative for $\mathcal{L}'$. Now consider $D + \phi^* D_E'$, which is an effective representative for $\mathcal{L} \otimes \mathcal{L}' \in \phi^* \Pic_E^{g-1}$. As the case $d = g-1$ has been proved, any effective representative of $\mathcal{L} \otimes \mathcal{L}'$ is a pullback from $E$, so that there exists some effective divisor $D_E''$ of degree $g-1$ on $E$ such that $D + \phi^*D_E' = \phi^*D_E''$.
    (Note that this is \textit{equality} of effective divisors, not linear equivalence.) Comparing supports, we find that $D$ is the pullback of the (effective) divisor $D_E'' - D_E'$.
\end{proof}

\begin{proof}[Proof of \zcref{thm: bielliptic curve no translates}]
    As $x \in W_X^d$, there exist $p_1, \ldots, p_d \in X$ such that $D = p_1 + \ldots + p_d$ is a representative for $x$. It suffices to show that $D$ contains a pullback under $\phi$. Suppose for contradiction that $D$ does not contain a pullback under $\phi$. As $\phi$ has degree two, the corresponding extension of function fields is Galois, and there exists $\sigma \in \Aut(X)$ such that for all $p \in X$, $\phi(p) = \phi(\sigma(p))$. Extend $\sigma$ to act on divisors. As $D$ does not contain a pullback under $\phi$, neither does $\sigma(D)$. Consider $[\sigma(D) + D] + \phi^* \Pic_E^0$. As $[\sigma(D) + D] \in \phi^*\Pic_E^d$ and cosets are disjoint or equal, $[\sigma(D) + D] + \phi^* \Pic_E^0 = \phi^*\Pic_E^d$.
    Let $D'$ be an effective divisor on $X$ such that $[D'] \in x + \phi^* \Pic_E^0$ is different from $[D]$ and $[K_X - D]$. Then, $D' + \sigma(D)$ is an effective representative of $\mathcal{L}(D' + \sigma(D)) \in \phi^*\Pic_E^{d}$, and hence by \zcref{lem: g 12 5s}, there exists an effective divisor $D_E$ on $E$ such that $D' + \sigma(D) = \phi^*(D_E)$.
    (Note that this is \textit{equality} of divisors, not linear equivalence.) As $\sigma(D)$ does not contain a pullback under $\phi$ and has degree $d$, any collection of $d$ pullbacks under $\phi$ whose support contains $\sigma(D)$ is exactly $D + \sigma(D)$, so $D' + \sigma(D) = D + \sigma(D)$, a contradiction as $[D'] \neq [D]$.
\end{proof}

\section{The curve $E \times_{\mathbb{P}^1}F$}\label{sec: no a semigroup}

Let $Z_X^d$ denote Ueno locus of $W_X^d$, which is defined to be the union of all positive dimensional abelian translates contained in $W_X^d$. We study $\Ueno{C}{5}$ for some special genus-7 curves. In this section we work over an algebraically closed field.

\subsection{Construction and preliminaries}\label{subsec: conditions on curve}
Let $E$ and $F$ be non-isogenous elliptic curves. Fix a degree-2 map $\phi\colon F \to \mathbb{P}^1$ and a degree-3 map $\theta\colon E \to \mathbb{P}^1$, and let $C$ be the fiber product $E \times_{\mathbb{P}^1} F$. Let $\phi_C\colon C \to E$ be the base change of $\phi$ to $C$, and similarly let $\theta_C\colon C \to F$ denote the base change of $\theta$. For a map of curves $\gamma\colon X \to Y$, let $B_\gamma$ denote the branch divisor of $\gamma$.

\begin{lem}\label{lem: correct genus}
    Let $p \in C$. Suppose that $B_\phi$ and $B_\theta$ have disjoint support. Then,
    \begin{enumerate}
        \item the image of $C$ under the inclusion into $E \times F$ is smooth,
        \item $\phi_C$ is ramified at $p$ if and only if $\phi$ is ramified at $\theta_C(p)$, \label{case: ramification of phi}
        \item $\theta_C$ is ramified at $p$ if and only if $\theta$ is ramified at $\phi_C(p)$, and \label{case: ramification of theta}
        \item $\theta^*_C(\theta_C(p)) \cap \phi_C^*(\phi_C(p)) = p$. \label{case: fibers almost disjoint}
    \end{enumerate}
\end{lem}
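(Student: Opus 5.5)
The plan is to work locally on $E \times F$. Write $C$ as the preimage of the diagonal under $\theta \times \phi \colon E \times F \to \mathbb{P}^1 \times \mathbb{P}^1$. Equivalently, $C$ is the zero locus of $\theta(e) - \phi(f)$, taken in local coordinates after moving $\infty$ if necessary. Fix $p = (e_0, f_0) \in C$ and set $t_0 = \theta(e_0) = \phi(f_0)$. Choose local coordinates $s$ at $e_0$, $u$ at $f_0$, and a local coordinate at $t_0$ on $\mathbb{P}^1$. In these coordinates $\theta = t_0 + (\text{unit}) \cdot s^{a}$ and $\phi = t_0 + (\text{unit}) \cdot u^{b}$, where $a = e_\theta(e_0)$ and $b = e_\phi(f_0)$. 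The hypothesis that $B_\phi$ and $B_\theta$ have disjoint support says that $t_0$ is not a branch point of both maps, so $\min(a,b) = 1$. All four parts will come from this local normal form.

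For (1), the local equation of $C$ at $p$ is $g(s,u) = c_1 s^a(1+\cdots) - c_2 u^b(1+\cdots)$. Since one of $a, b$ equals $1$, one partial derivative of $g$ is nonzero at the origin, so $C$ is smooth at $p$. Since $p$ was arbitrary, $C \subset E \times F$ is smooth. The implicit function theorem gives more. If $b = 1$, then $s$ is a uniformizer on $C$ at $p$ and $u$ is $(\text{unit}) \cdot s^{a}$. If $a = 1$, then $u$ is a uniformizer and $s$ is $(\text{unit}) \cdot u^{b}$.

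For (2) and (3), read off ramification indices from these expansions. The map $\phi_C$ sends $p$ to $e_0$, so its ramification index at $p$ is the order of vanishing of $s$ on $C$ at $p$. In the case $a = 1$ this order is $b = e_\phi(f_0)$. In the case $b = 1$ it is $1 = b$. Either way $e_{\phi_C}(p) = e_\phi(\theta_C(p))$, which gives (2). The same argument with the roles exchanged gives $e_{\theta_C}(p) = e_\theta(\phi_C(p))$, which is (3). Degree reasons also match: $\phi_C$ has degree $2$ and $\theta_C$ has degree $3$, because base change preserves degree.

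For (4), I read $\theta_C^*(\theta_C(p)) \cap \phi_C^*(\phi_C(p))$ as an intersection of supports. The support of $\theta_C^*(f_0)$ is $\{(e,f_0) : \theta(e) = t_0\}$, and the support of $\phi_C^*(e_0)$ is $\{(e_0, f) : \phi(f) = t_0\}$. A point lying in both must equal $(e_0, f_0) = p$. So the set-theoretic statement is immediate. If multiplicities are intended, the local normal form also shows the intersection multiplicity at $p$ is $1$, since one of the two fibers is unramified at $p$.

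I expect the only real care to be needed in (1). There one has to check that the disjoint-branch hypothesis really forces $\min(a,b) = 1$ at every point, including points over $\infty \in \mathbb{P}^1$. This is handled by choosing an affine chart of $\mathbb{P}^1$ containing $t_0$ before writing local equations.
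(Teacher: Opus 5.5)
Your proof is correct. For (1) it is the paper's argument written in coordinates. The paper identifies the Zariski tangent space of $C$ at $p$ with $T_{\phi_C(p)}E\times_{T_{t_0}\mathbb{P}^1}T_{\theta_C(p)}F$ and notes that it is $1$-dimensional because the two differentials cannot both vanish. That is exactly your observation that the local equation $\theta-\phi$ has a nonzero linear term when $\min(a,b)=1$.

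The routes diverge for (2)--(4). The paper disposes of (2) and (3) by appeal to ``general properties of base change.'' It proves (4) by noting that at most one of $\phi_C,\theta_C$ is ramified at $p$, and that a generic fiber of $\theta\circ\phi_C$ consists of six distinct points in which a fiber of $\phi_C$ meets a fiber of $\theta_C$ in a single point.

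Your local normal form does more work in two places:
\begin{enumerate}
\item It gives the exact equalities $e_{\phi_C}(p)=e_\phi(\theta_C(p))$ and $e_{\theta_C}(p)=e_\theta(\phi_C(p))$. It also makes explicit that the disjoint-branch hypothesis is what makes them hold; without it, ramification is not simply inherited under base change.
\item For (4), you replace the generic-fiber picture with the direct description of points of $C$ as pairs $(e,f)$ with $\theta(e)=\phi(f)$. So the two fibers meet set-theoretically only at $p$. The coefficients of $p$ in $\theta_C^*(\theta_C(p))$ and $\phi_C^*(\phi_C(p))$ are $a$ and $b$, whose minimum is $1$.
\end{enumerate}

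This divisor-level form of (4) is the one the paper actually uses later, for instance in the claim that $\phi_C^*(\phi_C(r))\cap\psi(c)$ has degree at most one. So your argument gives the more complete justification at essentially no extra cost.
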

\begin{proof}
    
    We show that the image of the inclusion $C \to E \times F$ is smooth. Let $q \in C$ and let $x = \theta(\phi_C(q))$. The tangent space $T_q(E \times_{\mathbb{P}^1}F)$ is isomorphic to $V = T_{\phi_C(q)}E \times_{T_x\mathbb{P}^1} T_{\theta_C(q)}F$. As $E$ and $F$ are smooth, $T_{\phi_C(q)}E$ and $T_{\theta_C(q)}F$ are 1-dimensional, and hence $V$ is 2-dimensional if and only if the maps $T_{\phi_C(q)}E \to T_x \mathbb{P}^1$ and $T_{\theta_C(q)}F \to T_x \mathbb{P}^1$ are both identically zero. Since the branch divisors are disjoint, the maps are never both identically zero, so $V$ is 1-dimensional and $C$ is smooth. Both (\ref{case: ramification of phi}) and (\ref{case: ramification of theta}) follow from general properties of base change. Finally, (\ref{case: fibers almost disjoint}) follows as at most one of $\phi_C$ and $\theta_C$ is ramified at $p$, and a generic fiber of $\theta \circ \phi_C$ contains 6 distinct points with fibers of $\phi_C$ and $\theta_C$ intersecting at a point.

\end{proof}

We assume that $B_\phi$ and $B_\theta$ have disjoint support so that $C$ is smooth.

\subsection{Low degree maps and the canonical}
Having established the class of curves we will consider, we compute $\omega_C$ and the low degree maps to $\mathbb{P}^n$.
Let $W_C^{d,r} \subset \Pic_C^d$ denote the space of line bundles with at least $r+1$ sections.
\begin{lem}\label{lem: no g15}
    The following hold:
    \begin{enumerate}
        \item There does not exist a map $C \to \mathbb{P}^1$ of degree 2, 3, or 5. \label{case: no g12 no g13}
        \item Every degree 4 map $C \to \mathbb{P}^1$ factors through $\phi_C$, that is, $W_C^{4,1} = \phi_C^* \Pic_E^2$. \label{case: W41}
        \item $W^{5,1}_C = W_C^1 + \phi_C^*\Pic_E^2$. \label{case: W51}
    \end{enumerate}
\end{lem}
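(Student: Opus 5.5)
The approach is to apply the Castelnuovo--Severi inequality to the pair $\phi_C\colon C \to E$ and an arbitrary map $\gamma\colon C \to \mathbb{P}^1$ of small degree, and then to handle line bundles with base points by peeling off the base locus. First I will compute the genus of $C$. The map $\phi_C$ has degree $2$ and, by \zcref{lem: correct genus}(\ref{case: ramification of phi}), is ramified exactly over the points of $E$ lying over $B_\phi$. Since $\deg B_\phi = 4$ and these points are not in $B_\theta$, this gives $3\cdot 4 = 12$ simple branch points. Riemann--Hurwitz then gives $2g_C - 2 = 2\cdot 0 + 12$, so $g_C = 7$.

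Next, let $\gamma\colon C \to \mathbb{P}^1$ have degree $n \le 5$. Suppose $\gamma$ and $\phi_C$ do not both factor through a common map $C \to C'$ of degree $>1$. Then Castelnuovo--Severi gives
\[
7 = g_C \le 2\cdot 1 + n\cdot 0 + (2-1)(n-1) = n+1,
\]
so $n \ge 6$, which is a contradiction. Hence there is a common factorization $C \to C' $ of degree $>1$. Since $\phi_C$ has degree $2$, this common factor is $\phi_C$ itself, up to isomorphism of $C'$ with $E$. So $\gamma = \psi \circ \phi_C$ for some $\psi\colon E \to \mathbb{P}^1$, and therefore $n = 2\deg\psi$ with $\deg \psi \ge 2$ because $E$ has genus $1$. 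This rules out $n = 2, 3, 5$, which proves (\ref{case: no g12 no g13}). For $n=4$ it shows that $\gamma$ is given by the pullback of a $g^1_2$ on $E$. I expect the main point needing care is exactly this step: checking that the "common map" alternative in Castelnuovo--Severi really forces factorization through $\phi_C$, and citing the inequality in the correct form.

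For (\ref{case: W41}), take $\mathcal{L} \in W_C^{4,1}$ and write $|\mathcal{L}| = |M| + B$ with $B$ the base locus. The moving part $|M|$ defines a map to $\mathbb{P}^1$ of degree $\deg M \le 4$. Degree $1$ is impossible since $g_C > 0$, and degrees $2$ and $3$ are excluded by (\ref{case: no g12 no g13}). So $B = 0$, and $\mathcal{L}$ is base-point free of degree $4$, hence $\mathcal{L} = \phi_C^*\mathcal{M}$ with $\mathcal{M} \in \Pic_E^2$ by the previous paragraph. (If $\mathcal{L}$ has $h^0 \ge 3$, a general pencil inside $|\mathcal{L}|$ is still base-point free of degree $4$, so the same conclusion holds.) Conversely, every $\mathcal{M} \in \Pic_E^2$ has $h^0(E,\mathcal{M}) = 2$, and pullback is injective on sections, so $\phi_C^*\Pic_E^2 \subset W_C^{4,1}$.

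For (\ref{case: W51}), the same base-locus argument applies to $\mathcal{L} \in W_C^{5,1}$. The moving part has degree at most $5$, and degrees $1,2,3,5$ are excluded, so it has degree exactly $4$ with a single base point $p$. Then $\mathcal{L} = \mathcal{O}(p) \otimes \mathcal{L}'$ with $\mathcal{L}' \in W_C^{4,1} = \phi_C^*\Pic_E^2$, so $\mathcal{L} \in W_C^1 + \phi_C^*\Pic_E^2$. The reverse inclusion holds because adding an effective divisor does not decrease $h^0$.
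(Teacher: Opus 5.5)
Your proposal is correct and follows the paper's argument. You apply Castelnuovo--Severi against $\phi_C$ to force every map of degree at most $5$ to factor through $\phi_C$, which gives (1) and (2), and then (3) follows because an element of $W_C^{5,1}$ must have a base point, which you peel off. You also supply details the paper leaves implicit, namely the genus computed via Riemann--Hurwitz for $\phi_C$ rather than by adjunction, and the base-locus and general-pencil reduction identifying $W_C^{4,1}$ with $\phi_C^*\Pic_E^2$, and these check out.
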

\begin{proof}
    Suppose that there exists a map $\gamma\colon C \to \mathbb{P}^1$ of degree $d$. If $\gamma$ does not factor through $\phi_C$, then by the Castelnuovo-Severi inequality \cite{ACGH1985}*{p.366}, $7 \leq d+1$,
    so that either $d \geq 6$ or $\gamma$ factors through $\phi_C$. This gives statements (\ref{case: no g12 no g13}) and (\ref{case: W41}).
    For statement (\ref{case: W51}), note that every element of $W_C^{5,1}$ must have a base point.
\end{proof}

\begin{lem}\label{lem: canonical sheaf}
    The genus of $C$ is 7 and $\omega_C = \phi_C^* \theta^* \mathcal{O}_{\mathbb{P}^1}(1) \otimes \theta_C^* \phi^* \mathcal{O}_{\mathbb{P}^1}(1) = \phi_C^*\theta^* \mathcal{O}_{\mathbb{P}^1}(2)$.
\end{lem}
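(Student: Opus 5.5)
We compute the genus with Riemann--Hurwitz applied to one of the two projections, and then identify $\omega_C$ as the pullback of a canonical-type divisor class.

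\textbf{Genus.} Consider $\phi_C\colon C \to E$, which has degree 2. By \zcref{lem: correct genus}(\ref{case: ramification of phi}), $\phi_C$ is ramified exactly at the points $p$ with $\theta_C(p)$ a ramification point of $\phi$. The map $\phi\colon F\to\mathbb{P}^1$ has 4 branch points. Since $B_\phi$ and $B_\theta$ are disjoint, $\theta$ is unramified over each of these branch points, so each has 3 preimages in $E$. This gives $\deg R_{\phi_C} = 12$, and Riemann--Hurwitz yields
\begin{align*}
2g_C - 2 = 2(0) + 12,
\end{align*}
so $g_C = 7$.

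\textbf{Canonical sheaf.} Since $\omega_E$ is trivial, Riemann--Hurwitz gives $\omega_C \cong \phi_C^*\omega_E \otimes \mathcal{O}_C(R_{\phi_C}) = \mathcal{O}_C(R_{\phi_C})$. We now identify $R_{\phi_C}$.
\begin{itemize}
\item[(a)] The ramification divisor of $\phi$ is $R_\phi = \sum_{i=1}^4 r_i$, where $r_i\in F$ lies over the branch point $b_i$.
\item[(b)] Because $\theta$ is étale over each $b_i$, base change gives $R_{\phi_C} = \theta_C^* R_\phi$ as divisors.
\item[(c)] On the elliptic curve $F$, the divisor $R_\phi$ is a degree-4 divisor. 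Since $2r_i \sim \phi^*\mathcal{O}(1)$ for each $i$, we get $2R_\phi \sim 4\phi^*\mathcal{O}(1)$, but this alone does not fix $[R_\phi]$. Instead, use Riemann--Hurwitz for $\phi$ itself: $0 = \omega_F = \phi^*\omega_{\mathbb{P}^1}\otimes\mathcal{O}(R_\phi)$, so $R_\phi \sim \phi^*\mathcal{O}_{\mathbb{P}^1}(2)$.
\end{itemize}
Therefore
\begin{align*}
\omega_C = \theta_C^*\phi^*\mathcal{O}_{\mathbb{P}^1}(2) = (\phi\circ\theta_C)^*\mathcal{O}_{\mathbb{P}^1}(2).
\end{align*}

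\textbf{Matching the stated form.} The two compositions $\phi\circ\theta_C$ and $\theta\circ\phi_C$ are the same map $C\to\mathbb{P}^1$, because $C$ is a fiber product. Hence
\begin{align*}
\theta_C^*\phi^*\mathcal{O}(1) = \phi_C^*\theta^*\mathcal{O}(1),
\end{align*}
which gives both expressions in the statement:
\begin{align*}
\omega_C = \phi_C^*\theta^*\mathcal{O}(1)\otimes\theta_C^*\phi^*\mathcal{O}(1) = \phi_C^*\theta^*\mathcal{O}(2).
\end{align*}
As a sanity check, the degree is $6\cdot 2 = 12 = 2g-2$.

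\textbf{Main obstacle.} The main technical point is step (b), that ramification pulls back under the flat base change. This holds because $\theta$ is étale near the branch points of $\phi$, so locally $C\to E$ is a pullback of $F\to\mathbb{P}^1$ along an étale map and the local ramification structure is preserved. This is exactly the content of \zcref{lem: correct genus}(\ref{case: ramification of phi}), refined to multiplicities: each such point has ramification index 2, since $\deg\phi_C = 2$.
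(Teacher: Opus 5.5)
Your proof is correct, but it takes a different route from the paper's.

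\textbf{The paper's route.} The paper does no ramification analysis. It notes that $C=(\theta\times\phi)^*\Delta$, where $\Delta\subset\mathbb{P}^1\times\mathbb{P}^1$ is the diagonal, a $(1,1)$-divisor. It then applies adjunction on the abelian surface $E\times F$, whose canonical bundle is trivial, so $\omega_C=\mathcal{O}(C)|_C$. This gives the symmetric form $\phi_C^*\theta^*\mathcal{O}_{\mathbb{P}^1}(1)\otimes\theta_C^*\phi^*\mathcal{O}_{\mathbb{P}^1}(1)$ directly, and the genus is read off from $\deg\omega_C=12$. The only input it needs from the preceding lemma is that $C$ is smooth inside $E\times F$.

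\textbf{Your route.} You work on $C$ itself with the Hurwitz formula for the double cover $\phi_C$. You use the ramification criterion (part (2) of the preceding lemma) and the disjointness of the branch loci to find $12$ simple ramification points. This gives the asymmetric form $\theta_C^*\phi^*\mathcal{O}_{\mathbb{P}^1}(2)$ first, which you then symmetrize using $\theta\circ\phi_C=\phi\circ\theta_C$.

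\textbf{Comparison.} Your route computes the genus independently of the canonical class and produces an explicit canonical divisor, namely the ramification divisor of $\phi_C$. The cost is some ramification bookkeeping. The paper's adjunction argument is shorter and yields the symmetric expression in the statement at once.

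\textbf{On your ``main obstacle.''} Step (b) is easier than you suggest. Relative differentials commute with base change, so $\Omega_{C/E}\cong\theta_C^*\Omega_{F/\mathbb{P}^1}$. Comparing lengths of these torsion sheaves gives $R_{\phi_C}=\theta_C^*R_\phi$ as divisors for any smooth $C$. In particular, $\deg R_{\phi_C}=3\cdot 4=12$ holds without invoking that $\theta$ is \'etale over the branch points of $\phi$. The disjointness hypothesis is really needed only for the smoothness of $C$.
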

    \begin{figure}
        \centering
    \[\begin{tikzcd}
    	C && {E \times F} && {\mathbb{P}^1 \times \mathbb{P}^1} && {\mathbb{P}^3} \\
    	&& F && {\mathbb{P}^1} \\
    	& E && {\mathbb{P}^1}
    	\arrow["{\phi_C \times \theta_C}", from=1-1, to=1-3]
    	\arrow["{\theta_C}", from=1-1, to=2-3]
    	\arrow["{\phi_C}"', from=1-1, to=3-2]
    	\arrow["{\theta \times \phi}", from=1-3, to=1-5]
    	\arrow[from=1-3, to=2-3]
    	\arrow[from=1-3, to=3-2]
    	\arrow["{|\Delta|}", from=1-5, to=1-7]
    	\arrow[from=1-5, to=2-5]
    	\arrow[from=1-5, to=3-4]
    	\arrow["\phi", from=2-3, to=2-5]
    	\arrow["\theta", from=3-2, to=3-4]
    \end{tikzcd}\]
        \caption{A diagram showing $C$ maps onto $\Delta$ in $\mathbb{P}^1 \times \mathbb{P}^1$}
        \label{fig:diagram}
    \end{figure}
\begin{proof}
    Consider \zcref{fig:diagram}. Let $\Delta$ denote the diagonal in $\mathbb{P}^1 \times \mathbb{P}^1$, so that $(\theta \times \phi)^*(\Delta) = C$. The divisor $\Delta$ determines the Segre embedding $\mathbb{P}^1 \times \mathbb{P}^1 \to \mathbb{P}^3$, and as $\mathcal{L}(C) = (|\Delta| \circ \phi \times \theta)^*(\mathcal{O}_{\mathbb{P}^3}(1)) = (\phi \times \theta)^*(\pi_1^* \mathcal{O}_{\mathbb{P}^1}(1) \otimes \pi_2^* \mathcal{O}_{\mathbb{P}^1}(1))$.
    By \zcref{lem: correct genus}, $C$ is a smooth curve in $E \times F$, so by adjunction, $\omega_C \simeq \omega_{E \times F} \otimes \mathcal{L}(C) \otimes \mathcal{O}_{C}$. As $E \times F$ is an abelian variety, $\omega_{E \times F}$ is trivial, so $\omega_C$ is $(\phi_C \times \theta_C)^*(\mathcal{L}(C))$. By commutativity of the diagram,
    \begin{align*}
        \omega_C &= (\phi_C \times \theta_C)^*((\phi \times \theta)^*(\pi_1^* \mathcal{O}_{\mathbb{P}^1}(1) \otimes \pi_2^* \mathcal{O}_{\mathbb{P}^1}(1))) \\
        &= \phi_C^* \theta^* \mathcal{O}_{\mathbb{P}^1}(1) \otimes \theta_C^* \phi^* \mathcal{O}_{\mathbb{P}^1}(1).
    \end{align*}
    The second equality follows as $\phi_C^* \theta^* = \theta_C^* \phi^*$. Finally, $C$ has genus 7 as $\omega_C$ has degree 12.
\end{proof}

\subsection{The geometry of $\Ueno{C}{d}$}

We now begin investigating $\Ueno{C}{d}$ directly. 
Having classified all translates of $\phi_C^* \Pic_E^0$ contained in $W_C^d$ for $d < 7$ by \zcref{thm: bielliptic curve no translates}, 
we turn to translates of $\theta_C^* \Pic_F^0$. Let $x = p_1 + \ldots + p_5$ be a degree-5 divisor on $C$, and let $p_6, p_7 \in C$.  We begin by applying Riemann-Roch in \zcref{lem: sections of pic2 F plus 2 basepoints} to show that for most $p_6$ and $p_7$, for a general $\mathcal{L} \in [p_6 + p_7] + \theta_C^*\Pic_F^2$, any element of $\mathbb{P}(H^0(C,\mathcal{L}))$ is a pullback along $\theta_C$. Then in \zcref{subsec: translates in W5}, 
we show that under some assumptions on the divisor $x$, an ``unexpected" translate $[x] + \theta^*_C \Pic_F^0 \subset W_X^5$ gives effective representatives of some line bundle in $[p_6 + p_7] + \theta_C^*\Pic_F^2$ which are not pullbacks, a contradiction. The final step of the proof is to remove the assumptions on $x$, which is done by leveraging $\mathcal{L} = \mathcal{O}(p_1+p_2+p_3) \otimes \phi_C^*\theta^* \mathcal{O}_{\mathbb{P}^1}(1)$, which is contained in both $[p_1+p_2+p_3] + \phi_C^* \Pic_E^3$ and $[p_1+p_2+p_3] + \theta_C^* \Pic_F^2$. Elements of $\mathbb{P}(H^0(C, \mathcal{L}))$ are pullbacks from $E$ plus $p_1 + p_2 +p_3$, and as before, an ``unexpected" translate $[x] + \theta^*_C \Pic_F^0 \subset W_X^5$ gives effective representatives of $\mathcal{L}$ which are not pullbacks plus base points.

We define two functions on $C$. As the map $\phi_C$ has degree 2, there exists an automorphism $\iota \colon C \to C$ such that for all $p \in C$, $\phi_C(\iota(p)) = \phi_C(p)$. We extend $\iota$ to act on divisors as a group homorphism. Let $\psi \colon C \to \Sym_C^2$ be the map $p \mapsto \theta_C^*(\theta(p)) - p$. As $p \leq \theta_C^*(\theta(p))$, the divisor $\theta_C^*(\theta(p)) - p$ is indeed effective.

\begin{remark}\label{rem: third point of fiber}
    The map $\psi$ has the following property. For $p, q \in C$, if $p \leq \psi(q)$, then there exists $p' \in C$ such that $\theta_C^*(\theta_C(p)) = p + q + p'$ and $\psi(p') = p + q$.
\end{remark}

\subsubsection{Translates of $\theta_C^*\Pic_F^2$ in $W_C^8$}

We study translates of $\theta_C^*\Pic_F^2$ in $W_C^8$. Our goal is \zcref{lem: translates are equal equation solver}, which relates cosets $[x] + \theta_C^*\Pic_F^0 \subset W_C^5$ to cosets in $W_C^8$. Let $p, q \in C$.

\begin{lem}\label{lem: sections of pic1 F plus 2 basepoints}
    Let $\mathcal{L} \in [p + q] + \theta_C^* \Pic_F^1$ be general. Then $h^0(C, \mathcal{L}) = 1$.
\end{lem}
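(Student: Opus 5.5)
The strategy is to argue by contradiction, using the description of $W_C^{5,1}$ from \zcref{lem: no g15}(\ref{case: W51}) together with the isogeny decomposition of $J(C)$. Set $T \coloneqq [p+q] + \theta_C^*\Pic_F^1 \subset \Pic_C^5$. Every element of $T$ is effective, since $p + q + \theta_C^*(f)$ represents it for $f \in F$. So $h^0 \geq 1$ on all of $T$. Because $T$ is irreducible (a translate of the elliptic curve $\theta_C^*\Pic_F^0$), upper semicontinuity shows it suffices to prove $T \not\subset W_C^{5,1}$.

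Suppose instead that $T \subset W_C^{5,1}$. By \zcref{lem: no g15}(\ref{case: W51}), $W_C^{5,1}$ is the image of the sum map $C \times E \to \Pic_C^5$, $(r, e) \mapsto [r] + \phi_C^*\mathcal{L}_e$, where $\mathcal{L}_e$ runs over $\Pic_E^2 \cong E$. Take an irreducible component of the preimage of $T$ that dominates $T$, and let $\Gamma$ be its normalization. We obtain a smooth curve $\Gamma$ with a finite surjective map $\tau \colon \Gamma \to T$ and maps $r \colon \Gamma \to C$, $e \colon \Gamma \to E$ satisfying
\begin{align*}
    \tau(\gamma) = [r(\gamma)] + \phi_C^*\mathcal{L}_{e(\gamma)} \quad \text{for all } \gamma \in \Gamma.
\end{align*}
Fix a base point $\gamma_0$ and subtract its value. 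The induced homomorphisms $J(\Gamma) \to J(C)$ then satisfy
\begin{align*}
    \tau_* = r_* + \phi_C^* \circ e_*,
\end{align*}
where $\tau_*$ has image in $\theta_C^*\Pic_F^0$ (an $F$-type subvariety) and $\phi_C^*\circ e_*$ has image in $\phi_C^*\Pic_E^0$ (an $E$-type subvariety).

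Now use $J(C) \sim E \times F \times A$, where $\dim A = 5 > 0$ because $g(C) = 7$ by \zcref{lem: canonical sheaf}. Compose the displayed identity with a projection $J(C) \to A$ (up to isogeny). Since $\Hom(F, A) = 0$ and $\Hom(E, A) = 0$ in the isotypic sense, the composite of $r_*$ with this projection vanishes.

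There are two cases.
\begin{enumerate}
    \item If $r$ is nonconstant, then $r_* \colon J(\Gamma) \to J(C)$ is surjective, because $r^*$ is injective up to finite kernel on Jacobians. Its composite to $A$ is therefore nonzero, a contradiction.
    \item If $r \equiv r_0$ is constant, then $T \subset [r_0] + \phi_C^*\Pic_E^2$. This would make the $F$-type translate $T$ equal to a translate of $\phi_C^*\Pic_E^0$, which is impossible since $E$ and $F$ are non-isogenous.
\end{enumerate}

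The step needing the most care is the isotypic bookkeeping: we need that $\theta_C^*\Pic_F^0$ and $\phi_C^*\Pic_E^0$ are exactly the $F$- and $E$-isotypic pieces, and that the complementary piece $A$ has no $E$ or $F$ factors. If $A$ could contain an $E$ or $F$ factor, I would instead use the weaker fact that the nonconstant map $r$ induces a surjection onto all of $J(C)$, which has dimension $7 > 2$. The contradiction then comes from dimension alone: $r_*$ would have to land in the $2$-dimensional abelian subvariety $\theta_C^*\Pic_F^0 + \phi_C^*\Pic_E^0$, yet it surjects onto the $7$-dimensional $J(C)$.
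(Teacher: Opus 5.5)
Your proof is correct, but its key step differs from the paper's.

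Both proofs use $W_C^{5,1} = W_C^1 + \phi_C^*\Pic_E^2$ to reduce to ruling out $T \subset W_C^{5,1}$. Both also handle the case of a constant $C$-coordinate the same way: a translate of $\theta_C^*\Pic_F^0$ cannot be a translate of $\phi_C^*\Pic_E^0$.

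The paper uses that each $\mathcal{M} \in W_C^{5,1}$ has a unique base point. So $C \times \phi_C^*\Pic_E^2 \to W_C^{5,1}$ is injective, which gives a map from the genus-one curve $T$ to $C$. That map is constant simply because $g(C) = 7 > 1$.

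You allow an arbitrary, possibly high-genus, curve $\Gamma$ dominating $T$, so Riemann--Hurwitz is unavailable. You compensate with Albanese functoriality. The map $r_* = \tau_* - \phi_C^*\circ e_*$ has image in the $2$-dimensional $\theta_C^*\Pic_F^0 + \phi_C^*\Pic_E^0$. Yet $r_*$ is onto the $7$-dimensional $J(C)$ whenever $r$ is nonconstant; the cleanest justification is $r_* \circ r^* = [\deg r]$.

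Your route does not need uniqueness of base points, and it never has to check that the set-theoretic inverse on $T$ is a morphism. The paper's route is more elementary.

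You should lead with the dimension count rather than the isotypic argument. The absence of $E$- and $F$-factors in $A$ is not a standing hypothesis at this point; simplicity of $A$ enters only in the classification of $Z_C^5$. Moreover, if you project to the quotient $J(C)/(\theta_C^*\Pic_F^0 + \phi_C^*\Pic_E^0)$ rather than to an abstract factor $A$, your first version becomes exactly your fallback, and no $\Hom$-vanishing is needed.
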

\begin{proof}
    By \zcref{lem: no g15}, statement (\ref{case: W51}), $h^0(C, \mathcal{L}) > 1$ for general $\mathcal{L}$ if and only if $[p + q] + \theta_C^* \Pic_F^1 \subset W_C^1 + \phi_C^* \Pic_E^2$. 
    Suppose for the sake of contradiction that $[p + q] + \theta_C^* \Pic_F^1 \subset W^1_C + \phi_C^*\Pic_E^2$. By \zcref{lem: no g15}, every line bundle $\mathcal{M} \in W_C^1 + \phi_C^* \Pic_E^2$ has a (necessarily unique) basepoint $p_{\mathcal{M}} \in C$. Thus, we have a map $W_C^1 + \phi_C^* \Pic_E^2 \to C \times \phi_C^*\Pic_E^2$, which is injective as if two distinct $\mathcal{M}_1$ and $\mathcal{M}_2$ in $W_C^1 + \phi_C^* \Pic_E^2$ have the same basepoint $p \in C$, then $\mathcal{M}_1(-p)$ and $\mathcal{M}_2(-p)$ are distinct in $\phi_C^* \Pic_E^2$. Restricting to $[p + q] + \theta_C^*\Pic_F^1$ and composing with the projection to $C$ gives a map $F \to C$, which must be constant, but the fibers of the projection are isomorphic to $E$, a contradiction as $E$ and $F$ are not isogenous.
\end{proof}

\begin{lem}\label{lem: sections of pic2 F plus 2 basepoints}
    Let $\mathcal{L} \in [p + q] + \theta_C^* \Pic_F^2$ be general. If $p + q \not \in \psi(C)$, then $h^0(C, \mathcal{L}) = 2$ and every effective representative for $\mathcal{L}$ is a pullback from $F$ plus $p + q$.
\end{lem}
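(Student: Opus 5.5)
We write $\mathcal{L} = \mathcal{O}(p+q)\otimes \theta_C^*\mathcal{M}$ with $\mathcal{M}\in \Pic_F^2$ general. The plan is to compute $h^0(C,\mathcal{L})$ by Riemann--Roch and then show that the two-dimensional space of pullbacks $\theta_C^*H^0(F,\mathcal{M})$, twisted by the section vanishing on $p+q$, already fills $H^0(C,\mathcal{L})$. Since $\deg \mathcal{L} = 8$ and $g = 7$, Riemann--Roch gives
\[
h^0(C,\mathcal{L}) = 8 - 6 + h^0(C,\omega_C\otimes\mathcal{L}^{-1}) = 2 + h^0(C,\omega_C\otimes\mathcal{L}^{-1}),
\]
where $\omega_C\otimes\mathcal{L}^{-1}$ has degree $4$. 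The inclusion $\theta_C^*H^0(F,\mathcal{M})\cdot s_{p+q}\subset H^0(C,\mathcal{L})$ already gives $h^0\geq 2$. So it suffices to show $h^0(C,\omega_C\otimes\mathcal{L}^{-1}) = 0$ for general $\mathcal{M}$. Once $h^0 = 2$, the inclusion is an equality, and every effective representative of $\mathcal{L}$ is $\theta_C^*D_F + p + q$ for $D_F\in|\mathcal{M}|$.

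To show the vanishing, I use Lemma \ref{lem: canonical sheaf}: $\omega_C = \theta_C^*\phi^*\mathcal{O}_{\mathbb{P}^1}(1)\otimes\phi_C^*\theta^*\mathcal{O}_{\mathbb{P}^1}(1)$, and $\theta_C^*\phi^*\mathcal{O}_{\mathbb{P}^1}(1)\in\theta_C^*\Pic_F^2$. As $\mathcal{M}$ ranges over $\Pic_F^2$, the bundle $\omega_C\otimes\mathcal{L}^{-1}$ therefore ranges over the translate $\phi_C^*\theta^*\mathcal{O}_{\mathbb{P}^1}(1)(-p-q) + \theta_C^*\Pic_F^0$ in $\Pic_C^4$. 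If $h^0$ were positive for general $\mathcal{M}$, this whole translate would lie in $W_C^4$, i.e.\ $[\phi_C^*\theta^*\mathcal{O}(1) - p - q] + \theta_C^*\Pic_F^0\subset W_C^4$. Equivalently, for general $\mathcal{M}'\in\Pic_F^0$ there is an effective degree-$4$ divisor $D$ with
\[
D + p + q \sim \phi_C^*\theta^*t + \theta_C^*(\text{deg } 0),
\]
that is, $D + p + q + \theta_C^*(\text{deg } 2\text{ effective}) \in \omega_C$-type classes. The cleanest route is to rephrase this as: $\mathcal{O}(p+q)\otimes\theta_C^*\mathcal{M}$ has a section whose divisor is not of the form $\theta_C^*D_F + p+q$. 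I then aim to contradict this using the hypothesis $p+q\notin\psi(C)$.

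To get the contradiction, I will mimic the argument in Lemma \ref{lem: sections of pic1 F plus 2 basepoints}. The previous lemma gives $h^0 = 1$ on $[p+q]+\theta_C^*\Pic_F^1$. Consider the degree-$4$ translate $T = [\phi_C^*\theta^*\mathcal{O}(1)-p-q]+\theta_C^*\Pic_F^0\subset W_C^4$. Adding a general point $r$ and using $h^0 = 1$ on degree-$7$ translates of the form $[p'+q'] + \theta_C^*\Pic_F^1$ (with $\phi_C^*\theta^*\mathcal{O}(1) = \theta_C^*\phi^*\mathcal{O}(1) + \ldots$ rewritten via Remark \ref{rem: third point of fiber}), I would force the effective divisors in $T$ to contain a full fibre of $\theta_C$ together with fixed points. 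Concretely, $T$ lies inside $W_C^4$, and $W_C^{4}$ translates by $\theta_C^*\Pic_F^0$ should come only from $\theta_C^*\Pic_F^1 + W_C^1$ (a fibre plus one point) or from $\theta_C^*\Pic_F^0+ W_C^4$ constants. Matching classes then says $\phi_C^*\theta^*\mathcal{O}(1) - p - q \sim \theta_C^*(\text{pt}) + r$ for a fixed $r$. Hence $p+q+r+\theta_C^*(\text{pt})$ contains a fibre of $\phi_C\circ\theta$-type, i.e.\ a full $\theta$-fibre of $E$ pulled back. Using Lemma \ref{lem: correct genus}(\ref{case: fibers almost disjoint}), this forces $p+q+r = \theta_C^*\theta_C(r)$, so $p+q = \psi(r)$, contradicting the hypothesis.

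The main obstacle is the classification step: showing that any translate of $\theta_C^*\Pic_F^0$ in $W_C^4$ has this forced shape. I would attack it by the injectivity/base-point argument of Lemma \ref{lem: sections of pic1 F plus 2 basepoints}, combined with the non-isogeny of $E$ and $F$, which rules out a map $F\to C$ or $F\to E$-fibres varying nontrivially.
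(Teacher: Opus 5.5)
Your reduction is correct and matches the paper. Riemann--Roch gives $h^0(C,\mathcal{L}) = 2 + h^0(C,\omega_C\otimes\mathcal{L}^{-1})$, the pulled-back sections give $h^0\geq 2$, and everything comes down to showing that the degree-$4$ translate containing $\omega_C\otimes\mathcal{L}^{-1}$ does not lie in $W_C^4$.

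The gap is in how you propose to show that. You reduce it to a classification of \emph{all} translates of $\theta_C^*\Pic_F^0$ in $W_C^4$, which you do not prove. The tools you suggest do not obviously give it. The base-point argument of \zcref{lem: sections of pic1 F plus 2 basepoints} works only because $W_C^{5,1} = W_C^1+\phi_C^*\Pic_E^2$ is explicitly known, and there is no comparable description of the $4$-dimensional locus $W_C^4$. Such a classification is essentially the degree-$4$ shadow of \zcref{thm: f+2 locus no new translates}. That theorem's proof runs through \zcref{lem: translates are equal equation solver}, and hence through the very lemma you are proving, so it cannot be borrowed here.

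The sketch also has smaller slips:
\begin{itemize}
\item $[p'+q']+\theta_C^*\Pic_F^1$ lies in degree $5$, not $7$.
\item The alternative ``$\theta_C^*\Pic_F^0+W_C^4$ constants'' is not a meaningful case.
\item The final step only gives a linear equivalence $p+q+r\sim\theta_C^*(s')$. Upgrading it to an equality of divisors needs $h^0=1$ for that degree-$3$ class, which comes from \zcref{lem: no g15}, statement (\ref{case: no g12 no g13}), not from \zcref{lem: correct genus}.
\end{itemize}

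The missing idea is that no classification is needed: twist by the specific point $q$, not by a general point $r$. Since the fibre square commutes, $\phi_C^*\theta^*\mathcal{O}_{\mathbb{P}^1}(1)=\theta_C^*\phi^*\mathcal{O}_{\mathbb{P}^1}(1)\in\theta_C^*\Pic_F^2$. So $\omega_C\otimes\mathcal{L}^{-1}$ is a general element of $-[p+q]+\theta_C^*\Pic_F^2$. Using $p+\psi(p)=\theta_C^*\theta_C(p)$, the bundle $\mathcal{M}:=\omega_C\otimes\mathcal{L}^{-1}(q)$ is then a general element of $-[p]+\theta_C^*\Pic_F^2=[\psi(p)]+\theta_C^*\Pic_F^1$.

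By \zcref{lem: sections of pic1 F plus 2 basepoints}, $h^0(C,\mathcal{M})=1$, and its unique divisor is $\psi(p)+\theta_C^*(t)$ with $t\in F$ general. Hence $h^0(C,\omega_C\otimes\mathcal{L}^{-1})=h^0(C,\mathcal{M}(-q))$ is nonzero only if $q$ lies in $\psi(p)+\theta_C^*(t)$ for general $t$, that is, only if $q\leq\psi(p)$. In that case \zcref{rem: third point of fiber} produces $p'$ with $\psi(p')=p+q$, contradicting the hypothesis $p+q\notin\psi(C)$. This is the paper's argument, and it replaces your entire third paragraph.
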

\begin{proof}
    By Riemann-Roch, $h^0(C,\mathcal{L}) = 2 + h^0(\omega_C \otimes \mathcal{L}^{-1})$.
    As $\omega_C \in \theta_C^* \Pic_F^4$ by \zcref{lem: canonical sheaf}, the set $\omega_C - \theta_C^*\Pic_F^2$ equals $\theta_C^* \Pic_F^2$. Consider $\mathcal{M} = \omega_C \otimes \mathcal{L}^{-1} \otimes \mathcal{O}(q)$, which is a general element of  $[\psi(p)] + \theta_C^* \Pic_F^1$. As $\omega_C \otimes \mathcal{L}^{-1} = \mathcal{M}(-q)$, $h^0(C, \omega_C \otimes \mathcal{L}^{-1}) = h^0(C, \mathcal{M}) - 1$ unless $q$ is a basepoint of $\mathcal{M}$. By \zcref{lem: sections of pic1 F plus 2 basepoints}, $h^0(C, \mathcal{M}) = 1$, so it suffices to show that $q$ is not a basepoint of $\mathcal{M}$, which is equivalent to $q \leq \psi(p)$. If $q \leq \psi(p)$, then by \zcref{rem: third point of fiber}, there exists $p' \in C$ such that $\psi(p') = p + q$, contradicting our assumption. Thus, a general $\mathcal{M}$ has a unique effective representative not supported on $q$. The result follows.
\end{proof}

\begin{lem}\label{lem: translates are equal equation solver}
    Let $x$ be a degree-5 divisor such that $[x] + \theta_C^* \Pic_F^0 \subset W_C^5$. If there exist $p_1, p_2, p_3, q_1, q_2 \in C$ such that $[p_1 + p_2 + p_3 + x] + \theta_C^* \Pic_F^0$ equals $[q_1 + q_2] + \theta_C^* \Pic_F^2$ and $q_1 + q_2 \not \in \psi(C)$, then $[x] + \theta_C^*\Pic_F^0 \subset W_C^2 + \theta_C^*\Pic_F^1$. 
\end{lem}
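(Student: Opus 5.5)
The plan is to follow the strategy sketched before \zcref{lem: sections of pic1 F plus 2 basepoints}: use the description of effective representatives of a general line bundle in $[q_1+q_2] + \theta_C^*\Pic_F^2$ coming from \zcref{lem: sections of pic2 F plus 2 basepoints}, and compare it with the effective divisors produced by the hypothesised translate $[x]+\theta_C^*\Pic_F^0 \subset W_C^5$.

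\textbf{Step 1: a normal form for general effective divisors in the translate.} Let $\mathcal{N} \in [x] + \theta_C^*\Pic_F^0$ be general, and choose an effective representative $D_{\mathcal{N}}$, which exists because the translate lies in $W_C^5$. Then $p_1+p_2+p_3+D_{\mathcal{N}}$ is an effective representative of a line bundle $\mathcal{L}$ in $[p_1+p_2+p_3+x]+\theta_C^*\Pic_F^0 = [q_1+q_2]+\theta_C^*\Pic_F^2$. As $\mathcal{N}$ ranges over a dense open subset of the translate, $\mathcal{L}$ ranges over a dense open subset of $[q_1+q_2]+\theta_C^*\Pic_F^2$, so we may take $\mathcal{L}$ general. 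Since $q_1+q_2\notin\psi(C)$, \zcref{lem: sections of pic2 F plus 2 basepoints} gives an equality of divisors
\begin{align*}
p_1+p_2+p_3+D_{\mathcal{N}} = q_1+q_2+\theta_C^*(f_{\mathcal{N}}),
\end{align*}
for some point $f_{\mathcal{N}}\in F$, viewed as a degree-$1$ divisor. Here $\theta_C^*(f_{\mathcal{N}})$ has degree $3$ and $\theta_C^*$ of a degree-$2$ divisor on $F$ has degree $6$, so the degrees match: $3+5 = 2+6$. I therefore expect the pullback piece to be $\theta_C^*(f_1+f_2)$ with $f_1+f_2$ an effective degree-$2$ divisor on $F$.

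\textbf{Step 2: remove the fixed points $p_1,p_2,p_3$.} The divisor $\theta_C^*(f_1+f_2)$ varies with $\mathcal{L}$ and sweeps out a $1$-parameter family of linear equivalence classes on $F$. So for general $\mathcal{N}$, none of $p_1,p_2,p_3$ lies in the support of $\theta_C^*(f_1+f_2)$; this holds once $f_1,f_2$ avoid the finitely many points $\theta_C(p_i)$. This is an open condition, and it is nonempty because $f_1+f_2$ moves in the pencil $|\mathcal{O}(f_1+f_2)|$ while $\mathcal{L}$ varies. Comparing supports, each $p_i$ must then appear among $q_1,q_2$. But there are three $p_i$ and only two $q_j$, so counting with multiplicity forces a contradiction unless the picture is different.

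\textbf{Step 3: redo the count and read off the decomposition.} The contradiction in Step 2 means the correct accounting is as follows. Any of the $p_i$ not absorbed by $q_1+q_2$ must lie in the moving part $\theta_C^*(f_1+f_2)$; since that part is general, it cannot contain a fixed point. The resolution is that $D_{\mathcal{N}}$ itself contains a moving pullback $\theta_C^*(f_{\mathcal{N}})$ of degree $3$, and the remainder of $D_{\mathcal{N}}$ is a degree-$2$ divisor lying among $q_1,q_2$ together with the other pullback fiber minus the $p_i$. Concretely, write $\theta_C^*(f_1+f_2)=\theta_C^*(f_1)+\theta_C^*(f_2)$. For general $\mathcal{L}$ at most one of these fibers can be constrained to contain the fixed points, so at least one fiber, say $\theta_C^*(f_2)$, is general and lies entirely in $D_{\mathcal{N}}$. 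Hence $D_{\mathcal{N}}-\theta_C^*(f_2)$ is an effective degree-$2$ divisor, and
\begin{align*}
[D_{\mathcal{N}}] \in W_C^2+\theta_C^*\Pic_F^1.
\end{align*}
This holds on a dense subset of $[x]+\theta_C^*\Pic_F^0$, and $W_C^2+\theta_C^*\Pic_F^1$ is closed (it is the image of a proper variety), so it holds on the whole translate.

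\textbf{Main obstacle.} The delicate step is the genericity argument in Step 3: justifying that for general $\mathcal{N}$ the divisor $f_1+f_2$ can be chosen to vary, with one of its two points avoiding the finitely many $\theta_C(p_i)$ and $\theta_C(q_j)$. I would handle it by a dimension count: the map $\mathcal{N}\mapsto[\mathcal{O}(f_1+f_2)]$ is a translation, hence dominant onto $\Pic_F^2$, and within each pencil only finitely many members meet a fixed point. So a general member has both fibers disjoint from the $p_i$. In that case all three $p_i$ must be absorbed by $q_1+q_2$, which is impossible, so we need to check carefully which case actually occurs. I expect the honest resolution to be this: the equality forces $f_1$ to be constant, namely $\theta_C^*(f_1)\ge p_1+p_2+p_3$ minus whatever $q_j$ absorbs, and then $\theta_C^*(f_2)$ moves, which gives the claimed decomposition.
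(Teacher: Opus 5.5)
\textbf{The key genericity step is asserted but not proved, and the argument you offer for it is wrong.} Your Step 3 outline is the right one. It is essentially the paper's argument in contrapositive form. The paper assumes the representative $y$ of a general class contains no full $\theta_C$-fiber. Then $y$ takes at most two points from each of $\theta_C^*(e_1)$ and $\theta_C^*(e_2)$, so $p_1+p_2+p_3$ contains a point of each fiber. Hence $a=e_1+e_2$ is one of finitely many divisors, which contradicts generality.

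The step that carries all the content is the claim that, for general $\mathcal{N}$, at most one of $f_1,f_2$ lies in the finite set $\{\theta_C(p_1),\theta_C(p_2),\theta_C(p_3)\}$. In Step 3 you state this without proof. Under ``Main obstacle'' you justify it by genericity of $f_1+f_2$ as a member of the pencil $|\mathcal{O}_F(f_1+f_2)|$. But $f_1+f_2$ is not a general member of that pencil. It is pinned down by $D_{\mathcal{N}}$: it is a member with $q_1+q_2+\theta_C^*(f_1+f_2)\geq p_1+p_2+p_3$.

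That pencil argument, if valid, would show both fibers avoid every $p_i$, which is exactly the conclusion of Step 2. The contradiction you reach there does not mean ``the picture is different.'' It proves the Step 2 claim is false. Since $q_1+q_2$ can absorb at most two of the $p_i$, at least one of $f_1,f_2$ always lies in $\{\theta_C(p_i)\}$.

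\textbf{The fix is to apply genericity to the class, not the divisor.} Suppose both $f_1$ and $f_2$ lay in $\{\theta_C(p_1),\theta_C(p_2),\theta_C(p_3)\}$. Then $f_1+f_2$ would be one of at most six divisors. Hence $\mathcal{N}\cong\mathcal{O}(q_1+q_2-p_1-p_2-p_3)\otimes\theta_C^*\mathcal{O}_F(f_1+f_2)$ would lie in a finite set. This contradicts $\mathcal{N}$ being a general point of a one-dimensional translate.

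So, say, $f_2\notin\{\theta_C(p_i)\}$, and $\theta_C^*(f_2)$ is disjoint from the $p_i$. Comparing multiplicities in $p_1+p_2+p_3+D_{\mathcal{N}}=q_1+q_2+\theta_C^*(f_1)+\theta_C^*(f_2)$ gives $\theta_C^*(f_2)\leq D_{\mathcal{N}}$, which completes Step 3.

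A single such $\mathcal{N}$ suffices, because $[x]+\theta_C^*\Pic_F^0=[D_{\mathcal{N}}-\theta_C^*(f_2)]+\theta_C^*\Pic_F^1$. So the closure argument is unnecessary. With this repair your direct route is slightly cleaner than the paper's case split, but as written the proposal does not establish its key claim.
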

\begin{proof}
     Let $y$ be an effective representative for a general element of $[x] + \theta_C^* \Pic_F^0$. 
     By \zcref{lem: sections of pic2 F plus 2 basepoints}, there exists $a \in \Sym_F^2$ such that $p_1 + p_2 + p_3 + y = q_1 + q_2 + \theta_C^*(a)$.
     If $y$ contains a pullback under $\theta_C$, then $[y] + \theta_C^*\Pic_F^0 = [x] + \theta_C^*\Pic_F^0$ is contained in $W_C^2 + \theta_C^* \Pic_F^1$ as desired. Suppose for contradiction that $y$ does not contain a pullback under $\theta_C$. Write $a = e_1 + e_2$ and let $\theta_C^*(e_1) = p_4 + p_5 + p_6 $, $\theta_C^*(e_2) = p_7 + p_8 + p_9$. As $y$ does not contain a pullback under $\theta_C$, without loss of generality, there are two possible cases:
    \begin{align*}
        y = q_1 + p_4 + p_5 + p_7 + p_8 \qquad \text{or} \qquad y = q_1 + q_2 + p_4 + p_5 + p_7.
    \end{align*}
    Either $p_1 + p_2 + p_3 = q_2 + p_6 + p_8$ or $p_1 + p_2 + p_3 = p_6 + p_8 + p_9$. In either case, $\{\theta_C(p_6), \theta_C(p_8)\}$ is contained in $\{\theta_C(p_1), \theta_C(p_2), \theta_C(p_3)\}$. As $a = \theta_C(p_6) + \theta_C(p_8)$, there are finitely many choices of $a$, a contradiction as $[y]$ is general and $a$ determines $[y]$.
\end{proof}

\subsubsection{Translates of $\theta_C^*\Pic_F^0$ in $W_C^5$}\label{subsec: translates in W5}

Equipped with \zcref{lem: translates are equal equation solver}, we give sufficient conditions for a translate $[x] + \theta_C^*\Pic_F^0$ to be contained in $W_C^2 + \theta_C^*\Pic_F^1$ in terms of $x$.

\begin{lem}\label{lem: no almost fibers of theta}
    If there exists $q \in C$ such that $\psi(q) \leq x$, then $[x] + \theta_C^* \Pic_F^0 \subset W_C^2 + \theta_C^* \Pic_F^1$.
\end{lem}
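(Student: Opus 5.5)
The plan is to reduce to \zcref{lem: translates are equal equation solver} by producing suitable points $p_1,p_2,p_3,q_1,q_2$, and to treat separately the degenerate situation where this fails. Throughout I assume, as in the surrounding discussion, that $[x] + \theta_C^*\Pic_F^0 \subset W_C^5$. Write $x = \psi(q) + r$ with $r = r_1 + r_2 + r_3$ effective of degree $3$.

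\textbf{Step 1: rewrite $[q + x]$.} Set $p_1 = q$. Since $\psi(q) + q = \theta_C^*(\theta_C(q))$, we get $q + x = \theta_C^*(\theta_C(q)) + r$. Hence $[q + x] + \theta_C^*\Pic_F^0 = [r] + \theta_C^*\Pic_F^1$, using that cosets are disjoint or equal.

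\textbf{Step 2: absorb $r_3$ into a fiber.} Choose $p_2 + p_3 = \psi(r_3)$, so that $r_3 + p_2 + p_3 = \theta_C^*(\theta_C(r_3))$. Then
\begin{align*}
    [p_1 + p_2 + p_3 + x] + \theta_C^*\Pic_F^0 = [r_1 + r_2] + \theta_C^*\Pic_F^2 .
\end{align*}
Taking $q_1 + q_2 = r_1 + r_2$, \zcref{lem: translates are equal equation solver} gives the conclusion provided $r_1 + r_2 \notin \psi(C)$. The labelling of $r_1,r_2,r_3$ is free, so it suffices that some pair $r_i + r_j$ lies outside $\psi(C)$.

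\textbf{Step 3: the degenerate case.} Suppose every pair $r_i + r_j$ lies in $\psi(C)$. For a pair of distinct points, $r_i + r_j = \psi(p')$ means $r_i, r_j$ lie in a common fibre $\theta_C^*(\theta_C(p'))$. If all pairs lie in $\psi(C)$, then $r_1,r_2,r_3$ lie in a single fibre of $\theta_C$. In that case $r = \theta_C^*(e)$ for some $e \in F$, at least when the $r_i$ are distinct. Then $x = \psi(q) + \theta_C^*(e)$, so
\begin{align*}
    [x] + \theta_C^*\Pic_F^0 = [\psi(q)] + \theta_C^*\Pic_F^1 \subset W_C^2 + \theta_C^*\Pic_F^1
\end{align*}
directly.

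\textbf{Main obstacle.} The step I expect to be delicate is the multiplicity bookkeeping in Step 3, when some $r_i$ coincide. A pair $2r_1 \in \psi(C)$ forces $\theta_C$ to be ramified at $r_1$, via \zcref{rem: third point of fiber} and the description of ramification in \zcref{lem: correct genus}. In that situation I would check that either $r$ is still a full fibre $\theta_C^*(\theta_C(r_1))$, or some other pairing $r_i + r_j$ avoids $\psi(C)$, so that Steps 1–2 apply. I expect this to be handled by \zcref{rem: third point of fiber}: if $r_1 + r_2 = \psi(p')$ then the fibre is exactly $p' + r_1 + r_2$. This pins down the third point, and the remaining configurations can then be enumerated case by case.
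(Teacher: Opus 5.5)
Your Steps 1--2 are exactly the paper's first case. Step 3 is correct when the $r_i$ are distinct, when $r_1=r_2\neq r_3$ (the fibre through $r_1$ is then forced to be $2r_1+r_3$), and when $r=3r_1$ with $\theta_C$ totally ramified at $r_1$. The gap is in the case you defer. The dichotomy you hope for in your last paragraph (either $r$ is a full fibre, or some pair $r_i+r_j$ avoids $\psi(C)$) is false.

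Suppose $\theta_C$ has ramification index exactly $2$ at $r_1$, so $\theta_C^*(\theta_C(r_1))=2r_1+p'$ with $p'\neq r_1$, and take $r=3r_1$. Every pair equals $2r_1=\psi(p')\in\psi(C)$, yet $3r_1$ is not a fibre. Such points do occur: $\theta$ generally has simple ramification points (it does for the explicit curve in the proof of \zcref{thm: messed up degree set}), and $\theta_C$ inherits them by \zcref{lem: correct genus}. Re-choosing $q$ does not rescue the argument. For $x=5r_1$ the only $c$ with $\psi(c)\le x$ is $c=p'$. Adding $p'+\psi(r_1)=2p'+r_1$ gives $x+2p'+r_1=2\,\theta_C^*(\theta_C(r_1))+2r_1$, so again $q_1+q_2=2r_1\in\psi(C)$. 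Nothing available at this stage excludes such an $x$, so every move of the form ``add $q+\psi(r_i)$'' fails there, and the lemma is left unproved in that case.

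The missing idea is that \zcref{lem: translates are equal equation solver} allows $p_1,p_2,p_3$ to be arbitrary points. You can therefore spend one of them on a free auxiliary point. If $r_1+r_2=\psi(p_6)$, add $q+p_6+p_7$; then
\[
q+p_6+p_7+x=\theta_C^*(\theta_C(q))+\theta_C^*(\theta_C(p_6))+r_3+p_7,
\]
so the coset is $[r_3+p_7]+\theta_C^*\Pic_F^2$. Choose $p_7$ outside the support of $\psi(r_3)$; then \zcref{rem: third point of fiber} gives $r_3+p_7\notin\psi(C)$, and the lemma applies. This is the paper's second case. It handles every multiplicity configuration at once, so your direct full-fibre argument, though correct where it applies, becomes unnecessary.
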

\begin{proof}
    Let $\psi(q) = p_1 + p_2$ and write $x = p_1 + \ldots + p_5$. 
    First assume that $p_4 + p_5 \not \in \psi(C)$. Consider the translate $[q + \psi(p_3) + x] + \theta_C^*\Pic_F^0$, which equals $[p_4 + p_5] + \theta_C^* \Pic_F^2$ as the two cosets have non-empty intersection, and apply \zcref{lem: translates are equal equation solver}.
    Thus, we can assume there exists $p_6 \in C$ such that $\psi(p_6) = p_4 + p_5$. Let $p_7 \in C$ be a point not contained in the support of $\psi(p_3)$. The translate $[q + p_6 + p_7 + x] + \theta_C^* \Pic_F^0$ equals $[p_3 + p_7] + \theta_C^* \Pic_F^2$, and apply \zcref{lem: translates are equal equation solver}.
\end{proof}

\begin{lem}\label{lem: x+F does not contain fiber phi}
    If there exists $a \in E$ such that $\phi_C^*(a) \leq x$, then $[x] + \theta_C^* \Pic_F^0 \subset W_C^2 + \theta_C^* \Pic_F^1$.
\end{lem}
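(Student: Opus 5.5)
The plan is to reduce to \zcref{lem: translates are equal equation solver}, as in the proof of \zcref{lem: no almost fibers of theta}. I will look for three auxiliary points $p_6,p_7,p_8 \in C$ and a pair $q_1+q_2 \not\in \psi(C)$ such that the cosets $[x+p_6+p_7+p_8] + \theta_C^*\Pic_F^0$ and $[q_1+q_2]+\theta_C^*\Pic_F^2$ meet, hence are equal. Write $x = p_1+\dots+p_5$ with $p_1+p_2 = \phi_C^*(a)$.

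The basic identity comes from the commutative square: $\phi_C^*\theta^* = \theta_C^*\phi^*$. Writing $\theta^*(\theta(a)) = a+b+c$ on $E$ gives
\begin{align*}
\phi_C^*(a)+\phi_C^*(b)+\phi_C^*(c) = \theta_C^*\bigl(\phi^*(\theta(a))\bigr) \in \theta_C^*\Pic_F^2 .
\end{align*}
So modulo $\theta_C^*\Pic_F^0$, the fiber $\phi_C^*(a)$ can be traded for $-\phi_C^*(b)-\phi_C^*(c)$ plus two $\theta_C$-fibers. Similarly, for any $r$, the divisor $r+\psi(r)$ is a $\theta_C$-fiber, hence congruent to a fixed class $f$ modulo $\theta_C^*\Pic_F^0$. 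Using these two relations, I will absorb $p_1,p_2$ and one more of $p_3,p_4,p_5$ into $\theta_C$-fibers by adding three points.

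Concretely, I first take $p_6+p_7 = \psi(p_5)$, so that $p_5+p_6+p_7 \equiv f$. Then I choose $p_8$ so that $\phi_C^*(a) + p_8$ becomes, modulo $\theta_C^*\Pic_F^0$, a $\theta_C$-fiber plus one point. The natural candidate is a point with $\theta_C(p_8) = \theta_C(\iota(p_1))$; then \zcref{lem: correct genus}(\ref{case: fibers almost disjoint}) controls how the $\theta_C$-fiber through $p_8$ meets the $\phi_C$-fiber $\phi_C^*(a)$. This would put the coset in the form $[q_1+q_2]+\theta_C^*\Pic_F^2$ with $q_1+q_2$ built from $p_3,p_4$ (or $p_3$ and one auxiliary point).

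As in \zcref{lem: no almost fibers of theta}, I will split into cases according to whether the resulting $q_1+q_2$ lies in $\psi(C)$. If it does not, \zcref{lem: translates are equal equation solver} applies directly. If it does, say $\psi(s) = q_1+q_2$, I will rechoose the auxiliary points, for instance replace $\psi(p_5)$ by $\psi$ of a different $p_i$, or add $s$ and a generic point. The aim is a new pair $q_1'+q_2'$ containing a generic point, which then cannot lie in the one-dimensional family $\psi(C)$ for a general choice. If $x$ already contains some $\psi(q)$, I will instead invoke \zcref{lem: no almost fibers of theta}.

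The main obstacle is the bookkeeping in the previous step: producing exactly three added points, as \zcref{lem: translates are equal equation solver} requires, while landing on two leftover points. The fiber identity above naturally produces four extra points $\phi_C^*(b)+\phi_C^*(c)$ rather than three. If the direct choice fails, my fallback is to reprove a variant of \zcref{lem: translates are equal equation solver} that allows more added points, such as $x+\phi_C^*(b)+\phi_C^*(c)+\psi(p_5)$. Its support-comparison argument seems to go through as long as the added divisor is fixed and the general element $y$ varies. In that argument one compares $y$ plus the fixed points with $q_1+q_2+\theta_C^*(\cdot)$, and finiteness of the possible $a$ again contradicts the generality of $y$.
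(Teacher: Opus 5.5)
There is a genuine gap. Reducing to \zcref{lem: translates are equal equation solver} is the right framework, but you never actually construct the three auxiliary points, and the targets you list cannot be reached.

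\textbf{The direct construction fails.} After adding $\psi(p_5)+p_8$, the divisor is $\phi_C^*(a)+p_3+p_4+p_8$ plus a $\theta_C$-fiber. If $q_1+q_2$ is taken from $p_3,p_4,p_8$, the remaining degree-$3$ divisor $\phi_C^*(a)+p_i$ must have class in $\theta_C^*\Pic_F^1$. By \zcref{lem: no g15}(\ref{case: no g12 no g13}), $C$ has no $g^1_2$ or $g^1_3$, so every effective degree-$3$ divisor has $h^0=1$. Hence $\phi_C^*(a)+p_i$ would have to \emph{equal} a $\theta_C$-fiber, which contradicts \zcref{lem: correct genus}(\ref{case: fibers almost disjoint}). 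Your phrase ``a $\theta_C$-fiber plus one point'' also has the wrong degree, since $\phi_C^*(a)+p_8$ has degree $3$. With $\theta_C(p_8)=\theta_C(\iota(p_1))$ you only get a fiber plus $p_1-p_8'$, where $p_8+p_8'=\psi(\iota(p_1))$. Any other choice of $q_1+q_2$ splits $\phi_C^*(a)$ and needs coincidences, such as two points of $x$ lying in one $\theta_C$-fiber, which the hypotheses do not provide.

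\textbf{The fallback fails too.} Adding $\phi_C^*(b)+\phi_C^*(c)+\psi(p_5)$ lands in $[p_3+p_4]+\theta_C^*\Pic_F^3$, which has degree $11$. There Riemann--Roch gives $h^0(\mathcal{L})\ge 5$. The divisors $p_3+p_4+\theta_C^*(D')$ with $D'\in|M|$, $M\in\Pic_F^3$, form only a $\mathbb{P}^2$ inside $|\mathcal{L}|$. So the analogue of \zcref{lem: sections of pic2 F plus 2 basepoints} is false, and the support comparison never gets its starting equality of divisors. The comparison itself is not the problem. Exactly three added points, giving degree $8$ where $h^0=2=h^0(F,M)$, is what makes that lemma work.

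\textbf{The missing idea.} Complete $x$ using the bielliptic involution $\iota$ rather than $\psi$. Write $x=\phi_C^*(a)+y$ and add $\iota(y)$, which is exactly three points. Then $x+\iota(y)=\phi_C^*\bigl(a+(\phi_C)_*y\bigr)$ is a degree-$4$ pullback from $E$. Its class is therefore $\phi_C^*\theta^*\mathcal{O}_{\mathbb{P}^1}(1)=\theta_C^*\phi^*\mathcal{O}_{\mathbb{P}^1}(1)\in\theta_C^*\Pic_F^2$ twisted by an element of $\phi_C^*\Pic_E^1$. That element is represented by some $p_6+\iota(p_6)$. The identity $\phi_C^*\theta^*=\theta_C^*\phi^*$ here is the fiber identity you wrote down; you applied it to $\phi_C^*(a)$ alone instead of to a full degree-$4$ pullback. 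Hence
\[
[x+\iota(y)]+\theta_C^*\Pic_F^0=[p_6+\iota(p_6)]+\theta_C^*\Pic_F^2.
\]
Moreover $p_6+\iota(p_6)\notin\psi(C)$ by \zcref{lem: correct genus}(\ref{case: fibers almost disjoint}). So \zcref{lem: translates are equal equation solver} applies directly, with no case analysis.
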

\begin{proof}
    Let $y$ be a degree-3 divisor such that $x = \phi_C^*(a) + y$. 
    Consider the translate $[\iota(y) + x] + \theta_C^* \Pic_F^0 \subset W_C^8$. Note that $[\iota(y) + x] \in \phi_C^* \Pic_E^4$. By the commutativity of the fiber square, $\phi_C^* \Pic_E^3$ intersects $\theta_C^* \Pic_F^2$ in $W_C^6$. The difference $\mathcal{O}(\iota(y) + x) \otimes (\phi_C^* \theta^* \mathcal{O}_{\mathbb{P}^1}(1))^{-1}$ is an element of $\phi_C^* \Pic_E^1$, and so has an effective representative of the form $p_6 + \iota(p_6)$ for some $p_6 \in C$. So, $\mathcal{O}(\iota(y) + x) = (\theta_C^* \phi^* \mathcal{O}_{\mathbb{P}^1}(1))(p_6 + \iota(p_6))$, and hence the cosets $[\iota(y) + x] + \theta_C^* \Pic_F^0$ and $[p_6 + \iota(p_6)] + \theta_C^* \Pic_F^2$ must be equal. By \zcref{lem: no g15}, statement (\ref{case: fibers almost disjoint}), $\theta_C^*(\theta_C(p_6))$ is supported away from $\iota(p_6)$, so we conclude with \zcref{lem: translates are equal equation solver}.
\end{proof}

\begin{lem}\label{lem: no pushfowards to pullback of O(1)}
    If there exists a degree-3 divisor $y \leq x$ such that $(\phi_C)_*( \mathcal{O}_C(y))$ equals $\theta^* \mathcal{O}_{\mathbb{P}^1}(1)$, then $[x] + \theta_C^* \Pic_F^0 \subset W_C^2 + \theta_C^* \Pic_F^1$.
\end{lem}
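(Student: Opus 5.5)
The plan is to reduce this to \zcref{lem: translates are equal equation solver}, in the same way as \zcref{lem: x+F does not contain fiber phi}. The hypothesis should say that the divisor $y+\iota(y)=\phi_C^*(\phi_C)_*y$ is linearly equivalent to $\phi_C^*\theta^*\mathcal{O}_{\mathbb{P}^1}(1)$.

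Write $x = y + p_4 + p_5$, with $y = p_1+p_2+p_3$. I read the hypothesis as saying that the pushforward divisor $(\phi_C)_* y$ on $E$ is linearly equivalent to a fiber of $\theta$. Since $\phi_C$ has degree $2$ with involution $\iota$, we have $\phi_C^*(\phi_C)_* y = y + \iota(y)$. Hence
\[
\mathcal{O}_C(y+\iota(y)) \simeq \phi_C^*\theta^*\mathcal{O}_{\mathbb{P}^1}(1) = \theta_C^*\phi^*\mathcal{O}_{\mathbb{P}^1}(1) \in \theta_C^*\Pic_F^2,
\]
using the commutativity of the fiber square as in \zcref{lem: canonical sheaf}. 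Consequently
\[
[\iota(y) + x] = [p_4+p_5] + [y+\iota(y)] \in [p_4+p_5] + \theta_C^*\Pic_F^2 .
\]
Both $[\iota(y)+x]+\theta_C^*\Pic_F^0$ and $[p_4+p_5]+\theta_C^*\Pic_F^2$ are cosets of $\theta_C^*\Pic_F^0$. They share the point $[\iota(y)+x]$, so they are equal.

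There are now two cases.

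\begin{enumerate}
\item Suppose $p_4+p_5\notin\psi(C)$. Then \zcref{lem: translates are equal equation solver} applies with the three points taken to be the support of $\iota(y)$ and with $q_1+q_2 = p_4+p_5$. It gives $[x]+\theta_C^*\Pic_F^0\subset W_C^2+\theta_C^*\Pic_F^1$.
\item Suppose $p_4+p_5 = \psi(q)$ for some $q\in C$. Then $\psi(q)\le x$, and \zcref{lem: no almost fibers of theta} gives the same conclusion.
\end{enumerate}

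The main point needing care is the interpretation and verification of the hypothesis. One must check that the condition on $(\phi_C)_*(\mathcal{O}_C(y))$ really yields $y+\iota(y)\sim\phi_C^*\theta^*\mathcal{O}_{\mathbb{P}^1}(1)$. If the statement is meant via the norm map $\mathrm{Nm}\colon\Pic_C\to\Pic_E$, this follows from $\phi_C^*\circ\mathrm{Nm} = 1+\iota^*$ on $\Pic_C$. If it is meant as a rank-$2$ pushforward bundle, I would instead take its determinant and correct by the branch contribution. Either way, the conclusion needed is exactly the linear equivalence above.

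The remaining steps are the coset-equality argument and the case split, and both are routine.
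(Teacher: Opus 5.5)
Your proposal is correct and follows essentially the same route as the paper: identify $y+\iota(y)=\phi_C^*(\phi_C)_*y$ with a representative of $\theta_C^*\phi^*\mathcal{O}_{\mathbb{P}^1}(1)\in\theta_C^*\Pic_F^2$. This shows that the coset $[\iota(y)+x]+\theta_C^*\Pic_F^0$ equals $[p_4+p_5]+\theta_C^*\Pic_F^2$, after which you split on whether $p_4+p_5\in\psi(C)$ and invoke \zcref{lem: translates are equal equation solver} or \zcref{lem: no almost fibers of theta}, exactly as the paper does (and your norm-map reading of the pushforward is the one the paper uses).
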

\begin{proof}
    Write $x = y + z$. By assumption, $\phi_C^*(\phi_C)_*( \mathcal{O}(y))$ equals $\phi_C^*\theta^*\mathcal{O}_{\mathbb{P}^1}(1) = \theta_C^*\phi^*\mathcal{O}_{\mathbb{P}^1}(1) \in \theta_C^*\Pic_F^2$, so the cosets $[\iota(y) + x] + \theta_C^*\Pic_F^0$ and $[z] + \theta_C^*\Pic_F^2$ must be equal. If $z \not \in \psi(C)$, we conclude with \zcref{lem: translates are equal equation solver}, while if $z \in \psi(C)$, we conclude with \zcref{lem: no almost fibers of theta}.
\end{proof}

We now prove the main tool used to prove \zcref{thm: f+2 locus no new translates}. We show that any ``unexpected" translate $[x] + \theta_C^* \Pic_F^0 \subset W_C^5$ must obey strong conditions on the points in the support of $x$.

\begin{proposition}\label{lem: contained in PicF2 plus two basepoints or fibers share points}
    Let $x$ be an effective degree-5 divisor such that $[x] + \theta_C^*\Pic_F^0 \subset W_C^5$. For any $p, q \in C$ such that $p + q \leq x$, $\theta_C(\iota(p)) = \theta_C(q)$ or $[x] + \theta_C^*\Pic_F^0 \subset W_C^2 + \theta_C^*\Pic_F^1$.
\end{proposition}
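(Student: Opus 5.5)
The plan is to run the same mechanism as in \zcref{lem: no almost fibers of theta}, \zcref{lem: x+F does not contain fiber phi} and \zcref{lem: no pushfowards to pullback of O(1)}. Write $x = p + q + z$ with $z = z_1 + z_2 + z_3$ effective of degree $3$. I will look for an effective degree-$3$ divisor $w$ and points $q_1, q_2 \in C$ such that
\begin{align*}
    [x + w] + \theta_C^*\Pic_F^0 = [q_1 + q_2] + \theta_C^*\Pic_F^2 \quad \text{with} \quad q_1 + q_2 \not\in \psi(C).
\end{align*}
Then \zcref{lem: translates are equal equation solver} gives the conclusion. Since cosets are disjoint or equal, it suffices to exhibit one representative of $[x+w]$ of the form $q_1 + q_2 + (\text{element of } \theta_C^*\Pic_F^2)$.

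First I discard the degenerate configurations using the lemmas already proved. If $p = \iota(q)$, then $\phi_C^*(\phi_C(p)) \le x$ and \zcref{lem: x+F does not contain fiber phi} applies. If two points of $x$ lie in a common fiber of $\theta_C$, so that $\psi(r) \le x$ for some $r$, then \zcref{lem: no almost fibers of theta} applies. If some degree-$3$ subdivisor $y \le x$ satisfies $(\phi_C)_*\mathcal{O}_C(y) = \theta^*\mathcal{O}_{\mathbb{P}^1}(1)$, then \zcref{lem: no pushfowards to pullback of O(1)} applies. So I may assume none of these occurs, and also that $\theta_C(\iota(p)) \neq \theta_C(q)$. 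The goal is then to show $[x] + \theta_C^*\Pic_F^0 \subset W_C^2 + \theta_C^*\Pic_F^1$ directly.

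The key geometric input is the line bundle $\mathcal{L} = \phi_C^*\theta^*\mathcal{O}_{\mathbb{P}^1}(1) = \theta_C^*\phi^*\mathcal{O}_{\mathbb{P}^1}(1)$. Let $t = \theta(\phi_C(p))$. The six points of $(\theta\circ\phi_C)^{-1}(t)$ are $p, \iota(p), p', \iota(p'), p'', \iota(p'')$. These split into the two $\theta_C$-fibers $p + \psi(p)$ and $\iota(p) + \psi(\iota(p)) = \iota(p) + \iota(\psi(p))$, and into the three $\phi_C$-fibers. The hypothesis $\theta_C(\iota(p)) \neq \theta_C(q)$ says exactly that $q \notin \psi(\iota(p))$, i.e.\ $q$ is not one of $\iota(p'), \iota(p'')$.

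With this in hand I take $w = \iota(p) + \iota(\psi(p))$, which completes the $\theta_C$-fiber through $\iota(p)$. Then $x + w$ contains $\theta_C^*(\theta_C(\iota(p)))$. To produce a second fiber, I use the linear equivalence $p + \iota(p) + \phi_C^*(\text{two points}) \sim \mathcal{L}$ on $E$ to trade the pair $p + \iota(p)$ for a $\theta_C$-fiber. Concretely, I write $\mathcal{O}(x+w)$ as $\mathcal{L}$ twisted by a degree-$2$ class built from $q$ and the $z_i$. I then use the Riemann--Roch computation of \zcref{lem: sections of pic2 F plus 2 basepoints} to see that this class has an effective representative $q_1 + q_2$. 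The final check is that $q_1 + q_2 \notin \psi(C)$. Whenever $q_1 + q_2 \in \psi(C)$, \zcref{rem: third point of fiber} produces a third point completing a $\theta_C$-fiber. That places either $q$ in $\psi(\iota(p))$, which is excluded by hypothesis, or two points of $x$ in one $\theta_C$-fiber, which was excluded above.

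I expect the main obstacle to be the bookkeeping in the middle step: choosing the three added points so that the leftover degree-$2$ part is genuinely effective and visibly not in $\psi(C)$. The exceptional configurations all have to funnel into the three preceding lemmas or into the stated alternative $\theta_C(\iota(p)) = \theta_C(q)$. If the choice $w = \iota(p) + \iota(\psi(p))$ does not close up, I would first try $w = \iota(p) + \iota(q) + s$ instead. Here $s$ is a point with $\phi_C(\phi_C(p)+\phi_C(q)+\phi_C(s)) $ equal to a divisor in $|\theta^*\mathcal{O}_{\mathbb{P}^1}(1)|$, which exists since that linear system on $E$ is a $g^1_3$. This makes $x + w \ge \phi_C^*$ of an element of $|\theta^*\mathcal{O}_{\mathbb{P}^1}(1)|$ minus a correction. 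The degenerate cases of this choice are again those where $\iota(q)$ or $s$ lies in a $\theta_C$-fiber through $p$ or $\iota(p)$.
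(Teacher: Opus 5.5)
There is a genuine gap, and it is in the step you call bookkeeping.

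\textbf{The first choice of $w$ is circular.} Your $w=\iota(p)+\iota(\psi(p))$ is itself a full $\theta_C$-fiber, namely $\theta_C^*(\theta_C(\iota(p)))$. Hence $[x+w]+\theta_C^*\Pic_F^0=[x]+\theta_C^*\Pic_F^1$. The equality you want with $[q_1+q_2]+\theta_C^*\Pic_F^2$ is then literally $[x]\in[q_1+q_2]+\theta_C^*\Pic_F^1$, i.e.\ $[x]+\theta_C^*\Pic_F^0\subset W_C^2+\theta_C^*\Pic_F^1$. That is the conclusion itself, and \zcref{lem: translates are equal equation solver} would not even be needed.

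\textbf{The effectivity claim is unsupported.} All the content now sits in the assertion that the degree-$2$ class $[q+z-\psi(p)]$ has an effective representative modulo $\theta_C^*\Pic_F^0$. Nothing you cite gives this. Riemann--Roch for a degree-$2$ class on a genus-$7$ curve only says $h^0\ge -4$. \zcref{lem: sections of pic2 F plus 2 basepoints} describes sections of degree-$8$ bundles already known to lie in $[p+q]+\theta_C^*\Pic_F^2$; it does not produce effective degree-$2$ divisors. Dimensionally, $W_C^2+\theta_C^*\Pic_F^0$ is a threefold in $\Pic_C^2$, so a general coset misses $W_C^2$. Your fallback $w=\iota(p)+\iota(q)+s$ has the same defect. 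It reduces to asking that $[z]+\theta_C^*\Pic_F^0$ meet $\iota(s)+W_C^2$, a nontrivial incidence that nothing forces.

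\textbf{The hypothesis is never used.} Your construction of $w,q_1,q_2$ never uses $[x]+\theta_C^*\Pic_F^0\subset W_C^5$. Yet it must be used: for general effective $x$ and general $p+q\le x$, both alternatives of the proposition fail.

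\textbf{The missing idea} is to use that hypothesis to place an effective divisor in a linear system you can describe completely. The paper adds $\psi(p)+\psi(q)$ (degree $4$) instead of three points.
\begin{itemize}
\item Writing $x=p+q+y$, one has $\psi(p)+\psi(q)+x=\theta_C^*(\theta_C(p))+\theta_C^*(\theta_C(q))+y$. So the degree-$9$ coset is $[y]+\theta_C^*\Pic_F^2$, which contains $\mathcal{L}=\mathcal{O}(y)\otimes\phi_C^*\theta^*\mathcal{O}_{\mathbb{P}^1}(1)$.
\item Riemann--Roch gives $h^0(\mathcal{L})=3+h^0(\omega_C\otimes\mathcal{L}^{-1})$. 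If the last term is nonzero, \zcref{lem: g 12 5s}, injectivity of $\phi_C^*$ and \zcref{lem: no pushfowards to pullback of O(1)} give the second alternative. This is where that lemma genuinely enters, not as a preliminary exclusion.
\item Otherwise every member of $|\mathcal{L}|$ is $y+\phi_C^*(a)$. The hypothesis now yields an effective $z$ in the coset of $x$ with $\psi(p)+\psi(q)+z=y+\phi_C^*(a)$ as divisors.
\item After \zcref{lem: no almost fibers of theta} removes overlaps between $y$ and $\psi(p)+\psi(q)$, one gets $\psi(p)+\psi(q)+p'+q'=\phi_C^*(a)$.
\item Four of these six points lie in $\psi(p)+\psi(q)$, so some $\phi_C$-fiber lies inside $\psi(p)+\psi(q)$. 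A $\phi_C$-fiber meets a $\theta_C$-fiber in at most one point, so this fiber meets both $\psi(p)$ and $\psi(q)$.
\item Hence $p+\psi(p)+q+\psi(q)$ is a full fiber of $\theta\circ\phi_C$, and $\iota(p)\le q+\psi(q)$. This gives $\theta_C(\iota(p))=\theta_C(q)$ directly, not via \zcref{lem: translates are equal equation solver}.
\end{itemize}
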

\begin{proof}
    Let $y$ be a degree-3 divisor such that $x = p + q + y$. Consider the translate $[\psi(p) + \psi(q) + x] + \theta_C^* \Pic_F^0$. As $[\psi(p) + \psi(q) + x] + \theta_C^* \Pic_F^0$ has non-empty intersection with $[y] + \theta_C^* \Pic_F^2$, the two cosets are equal. By the commutativity of the fiber square, $[y] + \theta_C^* \Pic_F^2$ intersects $[y] + \phi_C^* \Pic_E^3$ at $\mathcal{L} = \mathcal{O}(y) \otimes \phi_C^*\theta^* \mathcal{O}_{\mathbb{P}^1}(1)$. By Riemann-Roch and \zcref{lem: canonical sheaf}, $h^0(C, \mathcal{L}) = 3 + h^0(\omega_C \otimes \mathcal{L}^{-1})$.
    We show that if $h^0(C, \omega_C \otimes \mathcal{L}^{-1}) > 0$, then $[x] + \theta_C^*\Pic_F^0 \subset W_C^5$. If $h^0(C, \omega_C \otimes \mathcal{L}^{-1}) > 0$, then as $\omega_C \otimes \mathcal{L}^{-1} = \phi_C^*\theta^* \mathcal{O}_{\mathbb{P}^1}(1) \otimes \mathcal{O}(y)^{-1}$, $\phi_C^*\theta^* \mathcal{O}_{\mathbb{P}^1}(1)$ has an effective representative supported on $y$. By \zcref{lem: g 12 5s}, every effective representative for $\phi_C^*\theta^* \mathcal{O}_{\mathbb{P}^1}(1)$ is a pullback from $E$. If $y$ contains a pullback of $\phi_C$, then \zcref{lem: x+F does not contain fiber phi} implies that $[x] + \theta_C^* \Pic_F^0 \subset W_C^2 + \theta_C^*\Pic_F^1$, so we can assume that $y$ does not contain a pullback under $\phi_C$. Thus $\phi_C^*(\phi_C)_*(y)$ is the unique pullback under $\phi_C$ supported on $y$, and hence $\phi_C^* \theta^* \mathcal{O}_{\mathbb{P}^1}(1)$ equals $\phi_C^*(\phi_C)_*\mathcal{O}(y)$. As $\phi$ is ramified, the map $\phi_C^*\colon \Pic_E^3 \to \Pic_C^6$ is injective by \cite{BH2004}*{Proposition 11.4.3}, so $\theta^* \mathcal{O}_{\mathbb{P}^1}(1)$ equals $(\phi_C)_*\mathcal{O}(y)$. 
    Applying \zcref{lem: no pushfowards to pullback of O(1)} shows that $[x] + \theta_C^* \Pic_F^0$.
    
    Thus, we can assume that $h^0(C, \mathcal{L}) = 3$ and 
    every effective representative of $\mathcal{L}$ is a pullback from $E$ plus $y$, so that there exists an effective degree-5 divisor $z$ with $[z] \in [x] + \theta_C^* \Pic_F^0$ and $a \in \Sym_E^3$ such that $\psi(p) + \psi(q) + z = y + \phi_C^*(a)$.
    (Note that this is \textit{equality} of divisors, not linear equivalence.) If the support of $y$ intersects the support of $\psi(p)$ or $\psi(q)$, then by \zcref{rem: third point of fiber} and \zcref{lem: no almost fibers of theta}, $[x] + \theta_C^* \Pic_F^0 \subset W_C^2 + \theta_C^* \Pic_F^1$. Thus, there exist $p', q' \in C$ such that $z = y + p' + q'$ and $\psi(p) + \psi(q) + p' + q' = \phi_C^*(a)$. 
    Consider a point $r$ in the support of $\psi(p) + \psi(q)$, and note that by \zcref{lem: correct genus}, statement (\ref{case: fibers almost disjoint}), for all $c \in C$, the degree of $\phi^*_C(\phi_C(r)) \cap \psi(c)$ is at most one. Thus, there must exist some $r$ in the support of $\psi(p) + \psi(q)$ such that $\phi_C^*(\phi_C(r)) \leq \psi(p) + \psi(q)$. Then, $\phi_C^*(\theta^*(\theta(\phi_C(r))))$ must intersect both $\psi(p)$ and $\psi(q)$, and by the commutativity of the fiber square, must equal $p + \psi(p) + q + \psi(q)$. By \zcref{lem: correct genus}, statement (\ref{case: fibers almost disjoint}), $\iota(p) \not \in \psi(p)$ and hence $\iota(p) \leq q + \psi(q)$, so that $\theta_C(\iota(p)) = \theta_C(q)$ as desired.
\end{proof}

\begin{thm}\label{thm: f+2 locus no new translates}
    Let $E/k$ and $F/k$ be non-isogenous elliptic curves, let $\phi\colon F \to \mathbb{P}^1$ be a degree-2 map, let $\theta\colon E \to \mathbb{P}^1$ be a degree-3 map, let $C/k$ be the curve $E \times_{\mathbb{P}^1} F$, and let $\phi_C: C \to E$ and $\theta_C: C \to F$ denote the base changes of $\phi$ and $\theta$. If $C$ is smooth and $J(C) \sim E \times F \times A$, where $A/\overline{k}$ is simple, then
    \begin{align*}
        Z_C^5 = \left( \phi_C^* \Pic_E^1 + W_C^3 \right) \bigcup \left( \theta_C^* \Pic_F^1 + W_C^2 \right).
    \end{align*}
\end{thm}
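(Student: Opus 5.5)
The inclusion $\supseteq$ is immediate: every element of $\Pic_E^1$ is effective, so $\phi_C^*\Pic_E^1 \subset W_C^2$, and likewise $\theta_C^*\Pic_F^1 \subset W_C^3$. Adding $W_C^3$, respectively $W_C^2$, gives subsets of $W_C^5$ that are unions of positive dimensional abelian translates. For $\subseteq$, let $z + A \subset W_C^5$ with $A$ positive dimensional; we may take $A$ minimal, so $A$ is simple.

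Since $J(C) \sim E \times F \times A_0$ with $E$, $F$ non-isogenous elliptic curves and $A_0$ simple, every simple abelian subvariety of $J(C)$ is isogenous to $E$, $F$ or $A_0$. Abelian subvarieties isogenous to $E$ are images of $E \times E \to J(C)$ only if $E$ occurs with multiplicity one, which it does (non-isogeny with $F$, and $A_0$ has dimension $5$). So a simple $A$ isogenous to $E$ must equal $\phi_C^*\Pic_E^0$, and similarly for $F$ and $\theta_C^*\Pic_F^0$. The remaining case is $A = A_0$ of dimension $5 = \dim W_C^5$. This is impossible: $W_C^5$ is irreducible of dimension $5$, so a translate of $A_0$ would equal $W_C^5$, forcing $W_C^5$ to be an abelian variety and $C$ to be an elliptic curve. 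Alternatively, the bound $\dim$ of a translate in $W^d_C$ is at most $d/2$ for $d<g$ (\cite{DF1993}*{Proposition 3.3}) already excludes dimension $5$.

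If $A = \phi_C^*\Pic_E^0$, then \zcref{thm: bielliptic curve no translates} with $d = 5 < 7$ gives $z + A = y + \phi_C^*\Pic_E^1 \subset W_C^3 + \phi_C^*\Pic_E^1$.

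The main case is $A = \theta_C^*\Pic_F^0$. Write $z = [x]$ with $x = p_1 + \dots + p_5$, and suppose $[x] + \theta_C^*\Pic_F^0 \not\subset W_C^2 + \theta_C^*\Pic_F^1$. By \zcref{lem: contained in PicF2 plus two basepoints or fibers share points}, for every pair $p_i + p_j \leq x$ (in either order) we get $\theta_C(\iota(p_i)) = \theta_C(p_j)$. Thus the map $\tau = \theta_C \circ \iota$ sends each $p_i$ into $\{\theta_C(p_j) : j \neq i\}$, and in fact every other $p_j$ lies in the $\theta_C$-fiber of $\iota(p_i)$. Fix $p_1$. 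Then $p_2, \dots, p_5$ all lie in $\theta_C^*(\theta_C(\iota(p_1)))$, which has degree $3$, so some point repeats among them or one of them is $\iota(p_1)$.

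I will derive a contradiction by case analysis, using the constraints from \zcref{lem: no almost fibers of theta} and \zcref{lem: x+F does not contain fiber phi}. The first rules out $\psi(q) \leq x$, that is, two points of $x$ in a common $\theta_C$-fiber complementary to a third point. The second rules out $p + \iota(p) \leq x$. Together with \zcref{lem: correct genus}(\ref{case: fibers almost disjoint}), these should force four points into a degree-$3$ fiber while excluding any containment of $\psi(q)$. I expect this combinatorial step, including careful handling of multiplicities and ramification points, to be the main obstacle. If it resists, I would fall back on the condition that every $p_j$ ($j \neq 1$) satisfies $\theta_C(p_j) = \theta_C(\iota(p_1))$, and symmetrically for $p_2$, to conclude that all $\theta_C(p_i)$ coincide with all $\theta_C(\iota(p_i))$. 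That makes $x$ contain some $\psi(q)$, which contradicts \zcref{lem: no almost fibers of theta}.
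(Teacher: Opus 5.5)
Your reduction follows the paper. The isogeny decomposition plus the Debarre--Fahlaoui bound leaves only translates of $\phi_C^*\Pic_E^0$ and $\theta_C^*\Pic_F^0$. The first are handled by \zcref{thm: bielliptic curve no translates}. For the second, you correctly extract from \zcref{lem: contained in PicF2 plus two basepoints or fibers share points} that $\theta_C(\iota(p_i))=\theta_C(p_j)$ whenever $p_i+p_j\le x$.

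\textbf{The gap.} The final contradiction is never carried out, and the mechanism you propose does not work as stated. Four points of $x$ lying in a degree-$3$ fiber is not contradictory by itself, because the points may coincide. Your fallback claim that the configuration forces some $\psi(q)\le x$ is false. In that configuration $p_i$ and $\iota(p_i)$ lie in one $\theta_C$-fiber, so \zcref{lem: correct genus}(\ref{case: fibers almost disjoint}) gives $\iota(p_i)=p_i$. Then $\phi_C$ is ramified at $p_i$, so $\theta_C$ is not (disjoint branch loci), and the $\theta_C$-fiber through $p_i$ is $p_i+b+c$ with $b,c\neq p_i$. Consequently $x=5p$ contains no $\psi(q)$, and \zcref{lem: no almost fibers of theta} cannot handle repeated points. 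That case is exactly where the remaining content lies.

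\textbf{How the paper handles it.} The paper runs the argument for every effective representative $z$ of every class in the translate and finds each is of the form $5p$. So the whole translate lies in the image of $p\mapsto[5p]$, a curve birational to the genus-$7$ curve $C$, which cannot contain a translate of an elliptic curve.

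\textbf{A direct repair of your route.} Your single-divisor approach can be finished with tools you already named. If $2p\le x$, apply \zcref{lem: contained in PicF2 plus two basepoints or fibers share points} to the pair $(p,p)$ to get $\theta_C(\iota(p))=\theta_C(p)$. Then $\iota(p)=p$ by \zcref{lem: correct genus}(\ref{case: fibers almost disjoint}), so $2p=\phi_C^*(\phi_C(p))\le x$, which contradicts \zcref{lem: x+F does not contain fiber phi}. Hence $x$ is reduced. Now $p_2,\dots,p_5$ would be four distinct points in $\theta_C^{-1}(\theta_C(\iota(p_1)))$, which has at most three points, a contradiction. Without this observation, or the paper's diagonal argument, the proposal does not reach a contradiction.

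\textbf{Minor point.} The sentence about images of $E\times E\to J(C)$ is garbled. What you need is that $E$ occurs with multiplicity one in $J(C)$, so $\phi_C^*\Pic_E^0$ is the unique abelian subvariety isogenous to $E$; likewise for $F$.
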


\begin{proof}
    By assumption, $\Pic_C^0 \simeq E \times F \times A$, where $A$ is simple of dimension 5. By \cite{DF1993}*{Proposition 3.3}, any simple, positive-dimensional abelian translate contained in $W_C^5$ must be a translate of $\phi_C^* \Pic_E^0$ or $\theta_C^*\Pic_F^0$. As $\phi_C^* \Pic_E^1$ and $\theta_C^* \Pic_F^1$ are effective, $W_C^3 + \phi_C^* \Pic_E^1$ and $W_C^2 + \theta_C^* \Pic_F^1$ are contained in $W_C^5$. By \zcref{thm: bielliptic curve no translates}, every translate of $\phi_C^*\Pic_E^0$ contained in $W_C^5$ is a translate of $\phi_C^*\Pic_E^1$. We show that any translate of $\theta_C^*\Pic_F^0$ contained in $W_C^5$ is a translate of $\theta_C^*\Pic_F^1$.
    
    Let $[x] + \theta_C^*\Pic_F^0 \subset W_C^5$ and suppose for contradiction that $[x] + \theta_C^*\Pic_F^0$ is not contained in $W_C^2 + \theta_C^*\Pic_F^1$. Let $z$ be an effective degree-5 divisor such that $[z] \in [x] + \theta_C^*\Pic_F^0$, and let $p,q \in C$ be points such that $p+ q \leq z$. By \zcref{lem: contained in PicF2 plus two basepoints or fibers share points}, for any $q' \in C$ such that $p + q' \leq z$, $\theta_C(\iota(p)) = \theta_C(q')$, and hence $(\theta_C)_*(z) = \theta_C(p) + 4 \theta_C(\iota(p))$. If $q' \leq \psi(q)$, then by \zcref{rem: third point of fiber} and \zcref{lem: no almost fibers of theta}, $[x] + \theta_C^* \Pic_F^0 \subset W_C^2 + \theta_C^* \Pic_F^1$, and hence $q' = q$ and $z = p + 4q$. Switching the roles of $p$ and $q$, an identical argument shows that $y = 4p + q$, and so $y = 5p$. Thus $[x] + \theta_C^*\Pic_F^0$ is contained in the image of the diagonal map $\Delta: C \to \Sym_C^5 \to W_C^5$, a contradiction as by \zcref{lem: no g15}, statement (\ref{case: W51}), $\Delta$ is birational onto its image.
\end{proof}

We now prove \zcref{thm: messed up degree set}. By \zcref{cor: potential union of two sets} and \zcref{thm: f+2 locus no new translates}, it essentially suffices to exhibit a curve $C/\mathbb{Q}$ such that $C/\overline{\mathbb{Q}}$ satisfies the conditions of \zcref{thm: f+2 locus no new translates}.

\begin{proof}[Proof of \zcref{thm: messed up degree set}]
    Let $E/\mathbb{Q}$ be the positive rank elliptic curve $Y^2Z = X^3 + 2XZ^2 + Z^3$ and $F/\mathbb{Q}$ be the non-isogenous positive rank elliptic curve $U^2W = V^3 + VW^2 + W^3$. Let $\theta: E \to \mathbb{P}^1$ be the map induced by the rational map $[X:Y:Z] \mapsto [Y:Z]$, let $\phi: F \to \mathbb{P}^1$ be the map induced by the rational map $[V:U:W] \mapsto [W:V]$, and let $C = E \times_{\mathbb{P}^1} F$.
    The branch divisor $B_\theta$ is supported on $[1:0]$ and is supported on $t \in \mathbb{A}^1(\overline{\mathbb{Q}})$ if and only $x^3 + 2x + 1 - t^2$ shares a root with its derivative $3x^2 + 2$. Similarly, $B_\phi$ is supported on $\left\{ 1/\alpha_1, 1/\alpha_2, 1/\alpha_3,0 \right\}$, where the $\alpha_i$ are the roots of $v^3 + v + 1$. As $B_\phi$ and $B_\theta$ have disjoint support, $C$ is a smooth genus-7 curve by \zcref{lem: correct genus}.

    We show that $J(C) \sim E \times F \times A$, where $A$ is absolutely simple. For $p \in \{5,7\}$, we compute in Magma \cite{magma} to find that the genus of $C/\mathbb{F}_p$ is 7, so by  \cite{liu:2002}*{Corollary 10.1.25}, both are primes of good reduction. For both primes, the numerator of the Zeta function of $C/\mathbb{F}_p$ factors into two quadratics and an irreducible factor $Q_p(t)$ of degree 10, which is the characteristic polynomial of some abelian variety $A/\mathbb{Q}$ of dimension 5. We verify in Magma that $Q_5(t)$ and $Q_7(t)$ satisfy the conditions of \cite{stoll:2008}*{Lemma 3} (which can be applied to any abelian variety with $g = \dim A$, not just Jacobians), so that $\text{End}_{\overline{\mathbb{Q}}} A \otimes \mathbb{Q}$ embeds into the number field generated by $Q_p(t)$. The number fields generated by $Q_5$ and $Q_7$, however, are linearly disjoint, so $\text{End}_{\overline{\mathbb{Q}}} A$ must be $\mathbb{Z}$, which implies that $A/\overline{\mathbb{Q}}$ is simple.
    
    By \zcref{cor: potential union of two sets}, $\wp(C/\mathbb{Q}) = \wp_{ind}(C/\mathbb{Q}) \cup \wp_{\mathbb{P}^1}(C/\mathbb{Q})$. Riemman-Roch implies that $\mathbb{N}_{>7} \subset \wp_{\mathbb{P}^1}(C/\mathbb{Q})$, and from \zcref{lem: no g15}, $\{4,6\} \subset \wp_{\mathbb{P}^1}(C/\mathbb{Q})$ and $2,3,5\not \in \wp_{\mathbb{P}^1}(C/k)$. By \zcref{rem: when is translate indecomp?}, $7 \in \wp_{ind}(C/\mathbb{Q})$ We now determine, for $d \in \{2,3,5\}$, if $d \in \wp_{ind}(C/k)$. By \zcref{thm: f+2 locus no new translates}, every abelian translate in $W_C^5$ is geometrically decomposible, while both $\phi_C^*\Pic_E^1$ and $\theta_C^*\Pic_F^1$ are geometrically indecomposable. Thus, $\wp(C/\mathbb{Q}) = \mathbb{N} \smallsetminus \{1,5\}$.
\end{proof}

\bibliographystyle{alpha}
\bibliography{refs}

\end{document}